\documentclass[a4paper,10pt]{article}

\usepackage{amsmath}
\usepackage{amscd}
\usepackage{amsfonts}
\usepackage{stmaryrd}
\usepackage{mathrsfs}
\usepackage{amsthm}
\usepackage{dsfont}
\usepackage{latexsym, amssymb,amsthm,amsmath,lscape,epsfig,setspace,tikz,slashed}
\usepackage[retainorgcmds]{IEEEtrantools}
\usetikzlibrary{decorations.markings}
 \usetikzlibrary{arrows}
\usepackage[pdftex]{hyperref}
\usepackage[margin=10pt,font=small,labelfont=bf]{caption}
\usepackage{a4wide}
\usepackage[all,cmtip]{xy}
\usepackage{multicol}


\newcommand{\mfg}{\mathfrak{g}}

\newcommand{\R}{\mathbb{R}}

\newcommand{\J}{\mathbb{J}} 

\newcommand{\Id}{\operatorname{Id}}

\newcommand{\T}{\mathbb{T}}

\newcommand{\C}{\mathbb{C}}

\newcommand{\mcL}{\mathcal{L}}
\newcommand{\mcH}{\mathcal{H}}

\newcommand{\mcO}{\mathcal{O}}

\newcommand{\mcG}{\mathcal{G}}
\newcommand{\lb}{\llbracket}
\newcommand{\rb}{\rrbracket}

\newcommand{\del}{\partial}
\newcommand{\delbar}{\bar{\partial}}

\newcommand{\tL}{\widetilde{L}}

\newcommand{\ve}{\varepsilon}

\newcommand{\mcC}{\mathcal{C}}
\newcommand{\mcD}{\mathcal{D}}
\newcommand{\floor}[1]{\lfloor #1 \rfloor}
\newcommand{\anchor}{\pi_T}

\newcommand{\Cour}[1]{\llbracket #1\rrbracket} 

\newcommand{\II}{\mathbb{I}}
\newcommand{\JJ}{\mathbb{J}}

\newcommand{\dsum}{\oplus}
\newcommand{\iso}{\cong}
\renewcommand{\^}{\wedge}
\newcommand{\rank}{\textrm{rank}}
\renewcommand{\epsilon}{\varepsilon}
\newcommand{\Lie}{\mathcal{L}}

\newcommand{\mf}[1]{ \mathfrak{#1}}

\numberwithin{equation}{section}

\theoremstyle{plain}
\newtheorem{thm}{Theorem}[section]
\newtheorem{thm*}{Theorem}
\newtheorem{prop}[thm]{Proposition}

\theoremstyle{definition}
\newtheorem{lem}[thm]{Lemma}
\newtheorem{defn}[thm]{Definition}
\newtheorem{defn/thm}[thm]{Definition/Theorem}
\newtheorem{ex}[thm]{Example}
\newtheorem{cor}[thm]{Corollary}

\theoremstyle{remark}
\newtheorem{rem}[thm]{Remark}

\parindent0pt
\begin{document}

\title{A neighbourhood theorem for submanifolds in generalized complex geometry}

\author{M.A.Bailey \thanks{{\tt michael.bailey@uwaterloo.ca}},  University of Waterloo \\ \ \\
G.R.Cavalcanti \thanks{{\tt G.R.Cavalcanti@uu.nl. Supported by the VIDI grant 639.032.221 from NWO, the Netherlands Organisation for Scientific Research.}} , Utrecht University\\ \ \\
J.L.van der Leer Dur\'an \thanks{{\tt joeyvdld@math.toronto.edu. Supported by the NSERC Discovery Grant 355576.}}, University of Toronto
}
\date{\vspace{-5ex}}

\maketitle

\abstract \noindent  
We study neighbourhoods of submanifolds in generalized complex geometry. Our first main result provides sufficient criteria for such a submanifold to admit a neighbourhood on which the generalized complex structure is $B$-field equivalent to a holomorphic Poisson structure. This is intimately tied with our second main result, which is a rigidity theorem for generalized complex deformations of holomorphic Poisson structures. Specifically, on a compact manifold with boundary we provide explicit conditions under which any generalized complex perturbation of a holomorphic Poisson structure is $B$-field equivalent to another holomorphic Poisson structure. The proofs of these results require two analytical tools: Hodge decompositions on almost complex manifolds with boundary, and the Nash-Moser algorithm. As a concrete application of these results, we show that on a four-dimensional generalized complex submanifold which is generically symplectic, a neighbourhood of the entire complex locus is $B$-field equivalent to a holomorphic Poisson structure. Furthermore, we use the neighbourhood theorem to develop the theory of blowing down submanifolds in generalized complex geometry.   
 

\vskip12pt

\tableofcontents


\section{Introduction}
	A fundamental question about any geometric structure is ``What does it look like?". This can be asked at different levels, such as in a neighbourhood of a point or in a neighbourhood of a special submanifold. In a neighbourhood of a point, an answer to this question  amounts to finding all local invariants of the structure.
	For example, complex and symplectic structures have no local invariants and any point has a neighbourhood equivalent to a fixed standard model.
	In contrast, Riemannian metrics and Poisson structures do not admit such local models and further hypothesis are needed to produce a meaningful statement, see, for example, \cite{MR0257932,MR0355902} for the Riemannian case and  \cite{MR794374,MR2776372} for the Poisson case.
	
	Similar results describing the neighbourhood of a special submanifold are harder to come by as typically there is more local data to influence the behaviour of the structure.
	In symplectic geometry, Weinstein's Lagrangian and the Symplectic Neighbourhood Theorems \cite{MR0286137} provide successful examples where one can fully describe the geometric structure in a neighbourhood of a special submanifold.
	In contrast, in complex geometry similar results can only be proved under more restrictive assumptions  \cite{MR0137127,MR0206980}.

	We are interested in the local structure of generalized complex structures. For these structures, the state-of-the-art on local form theorems are those for neighbourhoods of branes and for the type change locus on stable generalized complex manifolds \cite{MR3805052},  for Poisson transversals \cite{MR3894047,2016arXiv160505386B}  and Bailey's theorem for neighbourhoods of points \cite{MR3128977}.
	The results in \cite{MR3805052,MR3894047,2016arXiv160505386B} rely on symplectic tools and are similar to Weinstein's Lagrangian and Symplectic Neighbourhood Theorems.
	The normal form theorem for points proved in \cite{MR3128977} is complex in nature and states that
%
%
generalized complex structures are locally equivalent to the product of symplectic and holomorphic Poisson structures.

	Our objective here is to extend the results from \cite{MR3128977} and provide a local form theorem for the neighbourhood of special types of submanifolds. Since we are aiming for a result that is complex in nature, the first desired property of these special submanifolds is that the generalized complex structure in a neighbourhood of the submanifold can be compared with a reference complex structure.  {\it Abelian Poisson Branes} (see Definition \ref{Poisson brane}) are a natural class of submanifolds for which such comparison can be made and, similar to Bailey's result, we show that there is an equivalence between smooth and holomorphic objects. A precise statement is given in  Theorem \ref{16:02:38}, but the message is that  if a neighbourhood $U$ of an Abelian Poisson brane $Y$ is sufficiently convex and the Dolbeault cohomology $H^{0,2}(U)$ vanishes, then the generalized complex structure  is $B$-field equivalent to a holomorphic Poisson structure on a neighbourhood of $Y$. 

This result will follow from an openness result for holomorphic Poisson structures. Namely, in Theorem \ref{14:11:52} we prove that if $(M,I)$ is a holomorphic Poisson manifold  whose boundary is sufficiently convex  and $H^{0,2}(M,I)=0$, then any nearby generalized complex structure is also holomorphic Poisson. This result is new even for compact manifolds without boundary (in which case the convexity hypothesis is empty) and implies, for example, that generalized complex structures near complex structures are of holomorphic Poisson type.
	
	The main tools to prove these results are analytical, namely, we use a Nash--Moser type of argument as developed in \cite{MR656198} (which relies on some version of elliptic regularity/Hodge theory), and Hodge theory for complex manifolds with boundary developed recently by van der Leer Duran \cite{s00208-021-02293-5} (which relies on the geometric condition of  having sufficiently convex boundary).
		
	Applications of this theorem arise readily in four real dimensions, where the hypotheses of convexity and vanishing of cohomology are easier to be fulfilled. The first application concerns the behaviour of the complex locus. We prove:
\begin{thm*}\label{theo:intro the 1}
	Suppose $Y$ is the compact complex locus inside a generalized complex 4-manifold $(M,\JJ)$. Then $\JJ$ is equivalent to a holomorphic Poisson structure in a neighbourhood of $Y$ in $M$.
\end{thm*}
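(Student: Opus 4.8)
The plan is to deduce this from the neighbourhood theorem, Theorem \ref{16:02:38}: it suffices to exhibit the complex locus $Y$ as an Abelian Poisson brane admitting a neighbourhood $U$ that is sufficiently convex and satisfies $H^{0,2}(U)=0$. The argument therefore splits into a geometric part, in which one verifies the brane hypotheses and produces a reference complex structure near $Y$, and an analytic part, in which one arranges convexity and the vanishing of the top Dolbeault group; the four-dimensionality enters decisively in the latter.

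First I would analyse the local structure of $\JJ$ along the complex locus. In real dimension four the type of $\JJ$ has constant parity, so on the generically symplectic part it is of type $0$ and it jumps to complex type $2$ exactly along $Y$. On any open region where $\JJ$ is already of complex type the statement is immediate, since there $\JJ$ is $B$-field equivalent to a complex structure and hence trivially holomorphic Poisson; the genuinely new case is the type-change locus, which by the known normal form is a compact embedded Riemann surface $Y$ along which $\JJ$ is, up to a $B$-field, an honest complex structure, and transverse to which the associated real Poisson bivector degenerates to first order. I would then check that this configuration satisfies the conditions of Definition \ref{Poisson brane}: the induced complex structure realises $Y$ as a complex submanifold for a germ of reference complex structure $I$ defined near $Y$, and the first-order transverse degeneration of the Poisson structure is of the abelian type required, so that $Y$ is an Abelian Poisson brane.

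The analytic heart of the argument is the construction of a neighbourhood basis of $Y$ that is simultaneously sufficiently convex and has vanishing $H^{0,2}$. Fixing a Hermitian metric on the normal bundle $N$ of the Riemann surface $Y$, I would take the fibrewise squared distance to $Y$, which after shrinking is strictly plurisubharmonic in the fibre directions; its sublevel sets give tubular neighbourhoods $U_\epsilon$ whose boundaries are strictly pseudoconvex, hence sufficiently convex in the sense demanded by Theorem \ref{16:02:38}. Because $I$ has complex dimension two, $H^{0,2}(U_\epsilon)$ is the top antiholomorphic Dolbeault group, and for strictly pseudoconvex domains of this kind it vanishes; this is precisely the regime covered by the Hodge theory with boundary invoked in the paper. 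Applying Theorem \ref{16:02:38} on such a $U_\epsilon$ then yields the desired $B$-field equivalence to a holomorphic Poisson structure near $Y$.

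The main obstacle I anticipate is the analytic step of guaranteeing that the shrinking tubular neighbourhoods are genuinely convex in the paper's technical sense. The strict plurisubharmonicity of the fibrewise distance is automatic in the transverse directions but competes with the curvature of $N$ and the variation of $I$ along $Y$; controlling these competing terms so that strict pseudoconvexity holds for all small $\epsilon$, uniformly over the compact curve $Y$, is the delicate point. Once convexity is secured, the vanishing of the top-degree Dolbeault group and the invocation of Theorem \ref{16:02:38} are comparatively routine.
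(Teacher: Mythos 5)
There are two genuine gaps here, and the first is the one the paper's entire proof is organised around: you assume that the complex locus $Y$ is a smooth embedded Riemann surface along which the Poisson bivector degenerates to first order. Neither assumption is justified. By Theorem \ref{16:05:53} the structure near each point of $Y$ is a holomorphic Poisson structure on an open set in $\C^2$, and $Y$ is the vanishing locus of the holomorphic bivector, i.e.\ of a holomorphic section of the anticanonical bundle. Such a locus can have two-dimensional components (which the paper dismisses as trivial) and, more seriously, its one-dimensional components can be \emph{singular} curves (nodes, cusps, non-reduced vanishing). Your argument has no mechanism for this case, and it is precisely here that the paper does its real work: it resolves the singular points of $Y$ by a sequence of generalized complex blow-ups, applies Theorem \ref{16:02:38} to the smooth proper transform $\tilde{Y}$, and then must reconcile the resulting holomorphic gauge with the gauge coming from the blow-up construction near the exceptional divisor. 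That reconciliation requires the interpolation and gluing machinery of Lemmas \ref{BG lemma} and \ref{flow equals gauge}, Proposition \ref{gluing prop} and Corollary \ref{gluing intersections}, none of which appears in your proposal. Without it, even granting smoothness were generic, you could not descend the gauge back to $M$.

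The second gap is in your analytic step. You propose to obtain convexity by showing tubular neighbourhoods $U_\epsilon$ are strictly pseudoconvex and to deduce $H^{0,2}(U_\epsilon)=0$ from that. But strict pseudoconvexity of disc bundles fails whenever the normal bundle $N\tilde{Y}$ is positive (e.g.\ a curve of positive self-intersection): the Levi form of the unit sphere bundle is controlled by the curvature of the Hermitian metric on $N\tilde{Y}$ (Example \ref{10:11:15}), and no amount of shrinking $\epsilon$ fixes its sign. The point the paper exploits is different and much softer: in complex dimension $2$ the $2$- and $3$-convexity conditions of Definition \ref{11:30:04} are automatic for \emph{any} boundary, since the Levi form lives on a rank-one space, so no pseudoconvexity is needed; and $H^{0,2}$ is the \emph{top} Dolbeault group of the total space of a line bundle over a curve, which vanishes by the fibrewise $\delbar$-Poincar\'e lemma argument of Lemma \ref{h^top on vb} and Corollary \ref{h0,2 on NY}, not by any convexity of the boundary. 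So the step you flag as "delicate" is in fact vacuous, while the step you call "comparatively routine" (singularities of $Y$ and gluing of gauges) is where the substance lies.
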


The second application is to establish a converse to the blow-up procedure introduced in \cite{MR3894047}. We study the question of when a generalized complex manifold is the (canonical) blow-up of another generalized complex manifold. The full answer is given in Theorem \ref{theo:blow-down}, and, in four real dimensions, the result becomes:
	\begin{thm*}\label{theo:intro the 1}
	Let $Y$ be a real two-dimensional surface in the complex locus of a four-dimensional generalized complex manifold $\widetilde M$. If $Y$ is diffeomorphic to $\C\mathbb{P}^1$ and has self-intersection $-1$, then $Y$ can be blown down to a point, that is, $M = \widetilde M /Y$ admits a generalized complex structure for which the quotient map $\widetilde M \to M$ is generalized holomorphic. 
	\end{thm*}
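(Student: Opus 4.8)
The plan is to reduce the four-dimensional blow-down statement to the general result stated in Theorem~\ref{theo:blow-down}, so the real work is verifying that the hypotheses of that theorem are met in the present low-dimensional situation. First I would recall that $Y \cong \C\mathbb{P}^1$ sits inside the complex locus of $\widetilde M$, and invoke the first application (the neighbourhood theorem for the complex locus) to conclude that in a neighbourhood $U$ of $Y$ the generalized complex structure $\JJ$ is $B$-field equivalent to a holomorphic Poisson structure $(I, \sigma)$. This is the crucial step that converts the generalized complex data into honest complex-analytic data on which the classical blow-down machinery can operate; it is exactly where the hypothesis that $Y$ lies in the complex locus, together with compactness and dimension four, is used to satisfy the convexity and $H^{0,2}$-vanishing assumptions needed in Theorem~\ref{16:02:38}.

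Next I would analyze the holomorphic Poisson structure $(I,\sigma)$ near $Y$. With respect to $I$, the surface $Y \cong \C\mathbb{P}^1$ becomes a smoothly embedded rational curve with self-intersection $-1$ in a complex surface. By the classical Castelnuovo--Enriques criterion, such a $(-1)$-curve can be blown down in the complex-analytic category: there is a complex surface $M_0$ and a holomorphic map $\widetilde U \to M_0$ contracting $Y$ to a point and biholomorphic elsewhere. I would then check that the Poisson bivector $\sigma$ descends under this contraction. The key point is that $\sigma$ must vanish appropriately along $Y$ so that its pushforward extends holomorphically across the image point; this compatibility is precisely the content of the general blow-down criterion, so I would verify that a $(-1)$-curve in the complex locus automatically meets the vanishing/compatibility condition for $\sigma$ that Theorem~\ref{theo:blow-down} requires, and then apply that theorem directly.

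Finally, I would assemble the pieces: having blown down $(I,\sigma)$ to a holomorphic Poisson structure on $M_0 = \widetilde M / Y$, I would transport the result back through the $B$-field equivalence to obtain a genuine generalized complex structure on $M$, and confirm that the quotient map $\widetilde M \to M$ is generalized holomorphic. The generalized-holomorphicity follows because the contraction is holomorphic Poisson away from $Y$ and the $B$-field transformations are generalized holomorphic isomorphisms, so the composite map respects the generalized complex structures on both sides.

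The main obstacle I anticipate is the descent of the Poisson structure $\sigma$ through the contraction. Blowing down a curve is delicate for the anticanonical data carried by a Poisson bivector: one must control the behaviour of $\sigma$ in a neighbourhood of $Y$ and show that its contraction does not introduce poles or singularities at the image point beyond what a holomorphic Poisson structure allows. In four dimensions the $(-1)$-curve and self-intersection hypotheses are exactly tuned to make this work via the general theorem, but the heart of the argument is checking that the generically-symplectic/complex-locus geometry forces $\sigma$ to be compatible with the contraction—so the bulk of the proof is really the reduction to, and invocation of, Theorem~\ref{theo:blow-down} with these verifications in place.
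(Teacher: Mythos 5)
Your overall strategy---obtain a holomorphic gauge near $Y$, contract $Y$ complex-analytically, descend the Poisson bivector via Polishchuk, and transport back through the $B$-field---is exactly the paper's route: the statement is the $n=2$ case of Theorem \ref{theo:blow-down}, and what you describe in your second and third paragraphs is essentially that theorem's proof. However, you have misplaced where the actual work lies. Theorem \ref{theo:blow-down} imposes no ``vanishing/compatibility condition for $\sigma$'': its hypotheses concern only the brane structure, the diffeomorphism type of $Y$, the topological type of $NY$, and the induced complex structure. The descent of $\sigma$ is not a hypothesis to be verified but an automatic consequence handled inside that theorem's proof by \cite[Proposition 8.4]{MR1465521} (on a surface $\sigma$ is a section of the anticanonical bundle, and such sections always push forward holomorphically under contraction of a $(-1)$-curve, vanishing at the image point). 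So the ``main obstacle'' you anticipate in your last paragraph does not exist.

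The verifications that \emph{are} needed are the ones you skipped, namely that the hypotheses of Theorem \ref{theo:blow-down} hold: (i) $Y$ is an Abelian Poisson brane---it is a generalized Poisson submanifold because it is a smooth complex-codimension-one piece of the complex locus, so $N^\ast Y$ is $\J$-invariant; it is Abelian automatically because $N^{\ast 1,0}Y$ has rank one; and it is a brane because any generalized complex structure of complex type on a one-complex-dimensional manifold is already block-diagonal (the same observations made for $\tilde Y$ in the proof of Theorem \ref{4d result}); (ii) $NY\cong\mathcal{O}_{\C\mathbb{P}^1}(-1)$ as complex line bundles, since a complex line bundle over $S^2$ is classified by its degree, which is the self-intersection $-1$; (iii) the induced complex structure on $Y\cong S^2$ is the standard one, by uniqueness of the complex structure on the sphere---this is exactly why the four-dimensional corollary needs no extra hypothesis, whereas Theorem \ref{theo:blow-down} must assume it for $n\geq 3$. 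Two smaller points: invoking Theorem \ref{4d result} requires $Y$ to be the \emph{entire} compact complex locus, whereas here $Y$ is only assumed to be a surface inside it, so the correct tool for the holomorphic gauge is Theorem \ref{15:25:36} applied to $Y$ itself (which is what Theorem \ref{theo:blow-down} does internally); and the local analytic contraction is Grauert's criterion \cite[Satz 7]{MR0137127} rather than Castelnuovo--Enriques, though in this germ-of-a-neighbourhood setting the two are interchangeable.
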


While we focused our attention on the generalized complex implications of our main theorems, we expect that their usefulness will go beyond their immediate area. For example, since the introduction of generalized complex structures two things became apparent:
\begin{enumerate}
\item these structures provide a good mathematical framework to describe type II string theory,
\item physicists and mathematicians do not have adequate tools to deal with the type change locus of a generalized complex manifold, which is precisely where the most interesting behaviour happens.
\end{enumerate}
The theory developed here provides the missing tools and they are as nice as we could hope for: the type change locus is described by holomorphic data.

This paper is organized as follows. In Section \ref{GCG} we review the basics of generalized complex geometry, starting with the geometry of $TM \oplus T^*M$ in \ref{sec:TTM}, following with generalized complex structures in Section \ref{sec:gcs} and finally introducing Abelian Poisson branes in Section \ref{sec:ABP}. In Section \ref{14:20:32} we introduce the framework for deformations of generalized complex structures, focusing on structures of holomorphic Poisson type and in Section \ref{sec:17Jan:0027} we prove our first result on stability of holomorphic Poisson structures on compact manifolds without boundary (Theorem \ref{15:13:30}). We continue in Section \ref{sec:hodge} with Hodge theory, the tool needed to state and prove the same result on manifolds with boundary. In Section \ref{sec:main results} we can finally state our main theorems. We follow these statements with applications in Section \ref{Applications} and finish the paper with the proofs of our main results, in the last section of the paper.

\section{Generalized complex geometry}\label{GCG}

Our main objective in this paper is to prove a neighbouhood theorem for generalized complex submanifolds of complex type. In this introductory section we briefly review the basic notions from generalized complex geometry relevant for this work and introduce the submanifolds on which we will focus our attention. For more details on the topic covered and proofs of the statements we refer to \cite{MR2811595}. 

\subsection{The double tangent bundle}\label{sec:TTM}
Given an $m$-dimensional manifold, $M$, equipped with a closed three-form, $H \in \Omega^3(M)$, one can form the \textsl{double tangent bundle} $\T M:=TM\oplus T^\ast M$. Elements of $\T M$ are denoted by $X+\xi, Y+\eta, \ldots$, where $X,Y\in TM$ and $\xi,\eta\in T^\ast M$, or simply by $u,v,\ldots,$ if the distinction between vectors and forms is not necessary. There are three relevant structures on $\T M$. The first is the \textsl{natural pairing} given by the pointwise evaluation of forms on vectors:
\begin{align*}
\langle X+\xi,Y+\eta	\rangle:=\frac{1}{2}\big(\xi(Y)+\eta(X)\big).
\end{align*}
The natural pairing is non-degenerate of signature $(m,m)$, and both $TM$ and $T^\ast M$ are isotropic, that is, the natural paring vanishes when restricted to these subbundles.

The second is the natural projection map $\anchor\colon\T M\rightarrow TM$, also called the \textsl{anchor}.

The third is a bracket on the space of sections, the \textsl{Courant bracket} \cite{MR998124}, which is a natural lift via the anchor of the Lie bracket of vector fields to sections of $\T M$:
\begin{align*}
\lb X+\xi,Y+\eta	\rb:=[X,Y]+\mcL_X\eta-\iota_Yd\xi-\iota_Y\iota_XH.
\end{align*}
Sometimes we write $\lb \cdot , \cdot \rb_H$ to emphasize which three-form is being used.  The following lemma lists the main properties of $\T M$ with these structures. 
\begin{lem}\label{19:16:48} For $u,v,w\in C^\infty(\T M)$ and $f\in C^\infty(M)$ we have
\begin{itemize}
\item[i)] $\lb u, \lb v,w \rb \rb=\lb \lb u, v\rb ,w  \rb+\lb v, \lb u,w \rb \rb$,
\item[ii)] $\anchor (\lb u,v \rb)=[ \anchor(u),\anchor(v)]$,					
\item[iii)] $\lb u,fv\rb=f\lb u,v\rb+(\anchor(u)\cdot f)v $, 
\item[iv)] $\lb u,u \rb=d\langle u,u \rangle$,  
\item[v)] $\langle \lb u,v \rb,w \rangle +\langle v, \lb u,w \rb \rangle=\anchor(u)\cdot \langle v,w	\rangle$.
\end{itemize}
\end{lem}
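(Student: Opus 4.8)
The plan is to verify all five identities by direct computation, writing $u = X+\xi$, $v = Y+\eta$, $w = Z+\zeta$ with $X,Y,Z\in\Gamma(TM)$ and $\xi,\eta,\zeta\in\Omega^1(M)$, and repeatedly invoking the standard Cartan calculus identities $\mcL_X = d\iota_X + \iota_X d$, $[\mcL_X,\iota_Y] = \iota_{[X,Y]}$, $[\mcL_X,\mcL_Y] = \mcL_{[X,Y]}$, $[\mcL_X,d]=0$ and $\iota_X\iota_Y = -\iota_Y\iota_X$. Identities (ii)--(v) are short and I would dispose of them first; identity (i), the Jacobi/Leibniz identity, carries essentially all the content and is the one place where closedness of $H$ is used.

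For (ii), the $TM$-component of $\lb u,v\rb$ is exactly $[X,Y]$ and the anchor merely reads off this component, so the identity is immediate. For (iii), I would expand $\lb u, fv\rb$ using $[X,fY] = f[X,Y] + (Xf)Y$ and $\mcL_X(f\eta) = f\mcL_X\eta + (Xf)\eta$, while the interior-product terms $\iota_{fY}d\xi = f\,\iota_Y d\xi$ and $\iota_{fY}\iota_X H = f\,\iota_Y\iota_X H$ are pointwise $C^\infty$-linear in $Y$; collecting the $(Xf)$-terms into $(Xf)(Y+\eta)$ yields $f\lb u,v\rb + (\anchor(u)\cdot f)v$. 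For (iv), since $[X,X]=0$ and $\iota_X\iota_X = 0$, the bracket collapses to $\lb u,u\rb = \mcL_X\xi - \iota_X d\xi = d\,\iota_X\xi = d(\xi(X))$ by Cartan's formula, and $\xi(X) = \langle u,u\rangle$. For (v), a direct expansion shows the $\iota\,d\xi$-terms and the $H$-terms cancel in antisymmetric pairs (for instance $(\iota_Y\iota_X H)(Z) = H(X,Y,Z) = -(\iota_Z\iota_X H)(Y)$), while the Lie-derivative terms recombine via $(\mcL_X\eta)(Z) + \eta([X,Z]) = X(\eta(Z))$ to produce exactly $\anchor(u)\cdot\langle v,w\rangle$.

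The main obstacle is (i). Here I would split the bracket as $\lb\cdot,\cdot\rb = \lb\cdot,\cdot\rb_0 - \iota_{(\cdot)}\iota_{(\cdot)}H$, where $\lb\cdot,\cdot\rb_0$ is the untwisted ($H=0$) Dorfman bracket. The identity for $\lb\cdot,\cdot\rb_0$ is a purely formal consequence of the Cartan identities above, obtained by expanding both iterated brackets and matching terms. Substituting the full bracket then produces, in addition, a collection of $H$-dependent correction terms coming from the nested interior contractions. The crux is that these corrections assemble into contractions of the single three-form $dH$; since $H$ is closed, $dH = 0$ and the correction vanishes, giving (i). Keeping careful track of signs and of which vector fields contract $H$ in the iterated brackets is the one genuinely delicate bookkeeping step.

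As a way to cut down that bookkeeping, I would exploit the fact that, once (iii) and (v) are in hand, both sides of (i) behave as derivations in each slot relative to $C^\infty(M)$, so it suffices to check (i) on a generating set of $\Gamma(\T M)$, namely vector fields and exact one-forms $df$, rather than on arbitrary sections. This reduces the Jacobiator computation to a handful of cases and isolates cleanly the term proportional to $\iota_Z\iota_Y\iota_X\,dH$.
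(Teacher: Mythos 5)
Your proposal is correct. Note that the paper does not actually prove this lemma: it is stated as standard background on the Courant--Dorfman bracket, with proofs deferred to the reference \cite{MR2811595}, so there is no in-paper argument to compare against. Your direct verification is the standard one and all five computations check out: (ii)--(v) follow exactly as you describe, and for (i) the $H$-dependent discrepancy between the two sides does assemble, after applying $\mcL_X\iota_Z\iota_Y=\iota_{[X,Z]}\iota_Y+\iota_Z\iota_{[X,Y]}+\iota_Z\iota_Y\mcL_X$ and Cartan's formula, into $\pm\,\iota_Z\iota_Y\iota_X\,dH$, which vanishes since $H$ is closed. One caution on your proposed shortcut: tensoriality of the Jacobiator in the \emph{first} slot does not follow from (iii) and (v) alone, since the Dorfman bracket is not skew and one needs the separate identity $\lb fu,v\rb=f\lb u,v\rb-(\anchor(v)\cdot f)u+2\langle u,v\rangle\,df$ to handle $\lb fu,\cdot\rb$; as you present the reduction only as an optional economy and your primary argument is the full expansion, this does not affect the correctness of the proof.
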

\begin{defn}
A vector bundle $E$ over $M$ equipped with a non-degenerate pairing $\langle \cdot, \cdot \rangle$, an anchor $\anchor\colon E\rightarrow TM$ and a bracket $\lb\cdot, \cdot\rb$ satisfying the above axioms is called a \textsl{Courant algebroid}.
\end{defn}

Among all Courant algebroids, $\T M$ has the special property that the anchor map $\T M \stackrel{\anchor}{\to} TM$ and its adjoint $T^*M  \stackrel{\anchor^*}{\to} \T M$ together give rise to a short exact sequence:
$$
0 \to T^*M \stackrel{\anchor^*}{\to}  \T M \stackrel{\anchor}{\to} TM \to 0.
$$

\begin{defn}
An {\it exact Courant algebroid}   is  a Courant algebroid $E\to M$ for which
\begin{equation}\label{eq:exact courant}
0 \to T^*M \stackrel{\anchor^*}{\to}  E\stackrel{\anchor}{\to} TM \to 0
\end{equation}
is an exact sequence.
\end{defn}

Given an exact Courant algebroid $E \to M$, a choice of isotropic splitting $s\colon TM \to E$ gives rise to an isomorphism of Courant algebroids between $E$ and $\T M$. From this point of view, the immersion $T^* M \to \T M$ is natural, but the immersion $TM \to \T M$ is not. Also the three-form $H$ is related to the lack of integrability of the splitting chosen.  While we will continue to work with $\T M$, the framework of exact Courant algebroids clarifies some of the aspects of the theory we describe below.

An important difference between $\T M$ and $TM$ is that these spaces have different symmetries. For example, while any diffeomorphism $\varphi\colon M\to M$ preserves the Lie bracket of vector fields, i.e.,
\[
\varphi_*[X,Y] = [\varphi_*X,\varphi_*Y],
\]
the same diffeomorphism will only relate the brackets $\lb \cdot,\cdot\rb_H$ and $\lb \cdot,\cdot\rb_{\varphi_*H}$:
\[
\varphi_*(\lb u,v \rb_H)= \lb\varphi_*u,\varphi_*v\rb_{\varphi_*H},
\]
where the action of $\varphi$ on $\T M$ is given by\footnote{The splitting in this matrix refers to the splitting $\T M=TM\oplus T^\ast M$.} 
\begin{align*}
\varphi_\ast:=\begin{pmatrix} \varphi_\ast & 0\\ 0 & (\varphi^{-1})^\ast \end{pmatrix}\colon \T M\rightarrow \T M,
\end{align*}
On the other hand, $\T M$ has further symmetries not present on $TM$ which are given by the action of two-forms. Indeed, a two-form $B\in\Omega^2(M)$ acts on $\T M$ via:
\begin{align*}
e^B (X+\xi):=X+\xi+\iota_XB.
\end{align*}
From the point of view of exact Courant algebroids, the action of two-forms corresponds to different choices of splittings of \eqref{eq:exact courant}. As for diffeomorphisms, the action of two-forms preserves the natural pairing but only relates different Courant brackets:
\[
e^B (\lb u,v \rb_H)= \lb e^B u,e^B v\rb_{H + dB},
\]

Even though separately the actions of diffeomorphisms and two-forms do not necessarily preserve the Courant bracket, the combination $\varphi_\ast\circ e^{B}$ does, provided that $\varphi^\ast H=H+dB$. This expresses the symmetries of $\T M$ as an extension of the diffeomorphisms preserving the cohomology class $[H]$ by closed two-forms:
$$0\to \Omega^2_{cl}(M) \to \mathrm{Sym}(\T M) \to \mathrm{Diff}_{[H]}(M) \to 0.$$
In particular closed two-forms always provide symmetries of $\T M$ and the action of a closed 2-form is called a \emph{$B$-field transform}.

Just as one can exponentiate vector fields to produce a one parameter family of diffeomorphisms, one can exponentiate sections of $\T M$ to produce symmetries of $\T M$. The defining equation for the \textsl{flow} of a section $u \in C^\infty(\T M)$ is 
\begin{align*}
 \frac{d}{dt} F_{tu}(v)=-\lb u ,F_{tu}(v) \rb.
\end{align*}
We can solve this ordinary differential equation explicitly in terms of the flow of vector fields. Indeed, given $u = X+\xi\in C^\infty(\T M)$ the flow of $u$ is the one-parameter family of symmetries of $\T M$, given by 
\begin{align}
F_{tu}:=
{\varphi_t}_\ast  e^{B_t}:\T M\rightarrow \T M,
\label{10:12:54}
\end{align}
where $\varphi_t$ is the flow of $X$ and $B_t:=\int_0^t\varphi_s^\ast(d\xi+\iota_{X}H)ds$.

\subsection{Generalized complex structures}\label{sec:gcs}

As before, let $(M,H)$ be a smooth $m$-manifold equipped with a closed three-form.  
\begin{defn}\label{13:35:50}
A \textsl{generalized complex structure} on $(M,H)$ is a complex structure $\J$ on $\T M$ which is orthogonal with respect to the natural pairing and whose $(+i)$-eigenbundle, $L\subset\T M_\C$, is involutive, i.e.\ $\lb C^\infty(L),C^\infty(L)\rb \subset C^\infty(L)$. 
\end{defn}

 An orthogonal complex structure $\J$ on $\T M$  is an \textsl{almost generalized complex structure} and involutivity of $L$ is refered as the \textsl{integrability} condition. 

 Orthogonality of $\J$ is equivalent to isotropy of $L$, which is therefore a Lagrangian subbundle of $\T M$ since  the decomposition $\T M_\C=L\oplus \overline{L}$ forces $L$  to be of maximal dimension. There is, therefore, a one-to-one correspondence between almost generalized complex structures and Lagrangian subbundles, $L \subset \T M$, that satisfy the \textsl{non-degeneracy condition} $L\cap \overline{L}=0$.  Involutive Lagrangian subbundles of $\T M_\C$ are also called \textsl{(complex) Dirac structures}, hence there is a one-to-one correspondence between generalized complex structures and nondegenerate complex Dirac structures. 
 
One of the most basic (pointwise) invariants of an almost generalized complex structure, $\J$, is its type:
\[
\mathrm{type}(\J)(p) := \dim_\C(T^*_pM \cap \J T^*_pM), \qquad p \in M.
\]
Since the type is the dimension of the intersection of two subbundles of $\T M$, it is a upper semicontinous function.

An alternative description of the type of $\J$ is obtained by considering the composition
$$T^*M \stackrel{\anchor^*} \to \T M \stackrel{\J}{\to} \T M \stackrel{\anchor}{\to} T M.$$
Orthogonality of $\J$ implies that $\anchor \circ \J \circ \anchor^*\colon  T^*M \to TM$ is induced by a bivector $\pi_\J$ and the type of $\J$ is (half of) the corank of $\pi$. If $\J$ is integrable, $\pi_\J$ is Poisson \cite{crainic2011}.

The next few examples indicate that the type corresponds to the number of complex directions $\J$ has at any given point. 

\begin{ex} 
\label{11:37:30a}
Let $I$ be an almost complex structure on $M^{2n}$. Then 
\begin{align}\label{14:18:03}
\J_I:=
\begin{pmatrix}
-I & 0 \\
0 & I^\ast
\end{pmatrix}
\end{align}
defines an almost generalized complex structure whose type is $n$ everywhere. The corresponding Dirac structure is given by $L_I=T^{0,1}M\oplus T^{\ast 1,0}M$. 
Integrability of $\J_I$ is equivalent to integrability of $I$ and that $H$ be of type $(2,1)+(1,2)$ with respect to $I$.

Conversely, if a generalized complex structure, $\J$, has type $n$ everywhere, then $\J: T^*M \to T^*M$ and hence defines an almost complex structure on $M$. Further, one can show that there is a two-form $B$ that transforms $\J$ into a complex structure, that is $\J = e^B_\ast \J_I e^{-B}_\ast$ for some $B \in \Omega^2(M)$. Therefore complex structures provide the prototype for structures of type $n$.
\end{ex}

\begin{ex}\label{11:37:30b}
Let $\omega \in \Omega^2(M)$ be a nondegenerate two-form, which therefore provides an isomorphism $\omega: TM \to T^*M$. Then
\begin{align}
\J_\omega:=
\begin{pmatrix}
0 & -\omega^{-1} \\
\omega & 0
\end{pmatrix}
\label{13:31}
\end{align}
is an almost generalized complex structure and since $\J:T^*M \to TM$, the structure has type zero everywhere. The associated Dirac structure is given by $L_\omega=e^{-i\omega}_\ast (TM_\C)=\{X-i\omega(X)| \ X\in TM_\C \}$. 
 The structure $\J_\omega$ is integrable if and only if $\omega$ is closed and $H=0$.
 
Conversely, if $\J$ has type zero everywhere, then both $TM$ and $\J T^*M $ are Lagrangian spaces complimentary to $T^*M$ hence there is a 2-form $B$ for which $\J T^*M= e^B TM$. Using this form to transform $\T M$, we put $\J$ in the block form above, that is $\J = e^B \J_\omega e^{-B}$. If $\J$ is integrable we also have that $dB = H$ and $d\omega =0$.
\end{ex}

\begin{ex}\label{11:37:30c}
Let $(I,\sigma)$ be a holomorphic Poisson structure on $M$ which we decompose into real and imaginary parts as $\sigma:=-\frac{1}{4}\left(IP + iP\right)$, that is, $P = -4\operatorname{Im}(\sigma)$. If $H=0$ then 
\begin{align}
\J_{(I,\sigma)}:=
\begin{pmatrix}
-I & P \\
0 & I^\ast
\end{pmatrix}
\label{13:30}
\end{align}
defines a generalized complex structure. The corresponding Dirac structure is given by 
\begin{align}\label{3:43:58}
L_{(I,\sigma)}:=T^{0,1}M\oplus e^\sigma(T^{\ast 1,0}M)=\{X+\sigma(\xi)+\xi| \ X\in T^{0,1}M, \ \xi\in T^{\ast 1,0}M\}.
\end{align} 
The type coincides with $\frac{1}{2}\text{corank}(P)$.

The converse to this result also holds. Namely, if the component of $\J$ that maps $TM$ to $T^*M$ vanishes, then $\J$ has the form \eqref{13:30} which allows us to define a complex structure and read off the imaginary part of the holomorphic Poisson bivector which in together fully determine the holomorphic Poisson bivector.  Stated in a splitting-invariant way, given a generalized complex structure $\J$, an involutive, isotropic splitting $s:TM \to \T M$ for the anchor $\pi:\T M\rightarrow TM$ whose image is invariant by $\J$ induces a natural holomorphic Poisson structure on $M$. Indeed, in this situation there is a unique $B$-field transform for which $e^B s(TM) =  TM$ and, by invariance of $TM$ under $e^{B}\J e^{-B}$ has the form \eqref{13:30}.
\end{ex}

The situation described in Example \ref{11:37:30c} occurs frequently enough to deserve a name.

\begin{defn}
A \emph{holomorphic gauge} for a generalized complex structure $\JJ$ is a $B$-field $B$ such that
\begin{align}\label{HP1}
e^{B} \JJ e^{-B} =
\begin{pmatrix}
-I & P \\
0 & I^*
\end{pmatrix}
\end{align}
for some $I$ and $P$. In this case, $I$ is a complex structure and $\sigma := -\frac{1}{4}\left(IP + iP\right)$ is a holomorphic Poisson structure.
\end{defn}


Examples \ref{11:37:30a} to \ref{11:37:30c} tie up generalized complex structures with the more common complex and symplectic structures. A valid question is whether there are examples which are not modeled on complex nor symplectic objects. The answer to the question is interesting as there is a marked difference between local and global behaviour. As we mentioned before, locally these examples extinguish all possibilities:

\begin{thm}[Bailey \cite{MR3128977}]\label{theo:Bailey}
A point  of type $k$ in a generalized complex manifold has a neighbourhood equivalent, via the action of diffeomorphisms and two-forms, to a neighbourhood of $0$ in $\R^{2(n-k)}\times \C^k$, where $\R^{2(n-k)}$ is endowed with the standard symplectic structure and $\C^k$ with a holomorphic Poisson structure which vanishes at $0$.
\end{thm}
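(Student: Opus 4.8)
The plan is to prove the theorem in the same spirit that the rest of the paper operates: reduce the statement to a normal-form problem for a deformation of a complex structure, and then solve that problem by a Nash--Moser iteration resting on a local Hodge-type (Dolbeault) lemma. First I would peel off the symplectic directions. By the generalized Darboux (splitting) theorem, near a point $p$ of type $k$ the Poisson bivector $\pi_\J$ has rank $2(n-k)$ at $p$, and a neighbourhood splits, via diffeomorphisms and $B$-fields, as a product of the standard symplectic model $(\R^{2(n-k)},\J_{\omega_0})$ with a generalized complex structure $\J'$ on $\R^{2k}$ whose type at the origin is $k$, i.e. $\pi_{\J'}(0)=0$. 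This uses only symplectic (Weinstein-type) technology along the leaf through $p$ and reduces the theorem to the following statement: \emph{a generalized complex structure of complex type at a point is $B$-field equivalent, near that point, to a holomorphic Poisson structure whose bivector vanishes there.} Renaming $k\mapsto n$, I work on $M=\C^n$ with $\J(0)=\J_{I_0}$ for the standard $I_0$.

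Next I would set up the deformation complex. The Dirac structure $L\subset \T M_\C$ of $\J$ is a small deformation of $L_{I_0}=T^{0,1}M\oplus T^{*1,0}M$, hence the graph of a bundle map $\epsilon\colon L_{I_0}\to\overline{L_{I_0}}$, i.e. a section of $\wedge^2 L_{I_0}^*$. Integrability of $\J$ is the Maurer--Cartan equation $d_{L_{I_0}}\epsilon+\tfrac12\lb\epsilon,\epsilon\rb=0$ for the Lie-algebroid differential $d_{L_{I_0}}$, and, by the description in Example \ref{11:37:30c}, holomorphic Poisson structures occupy the slice where $\epsilon$ is a $(2,0)$-bivector $\sigma$ with $\delbar\sigma=0$. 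The symmetries generated by diffeomorphisms and $B$-fields act infinitesimally by $\epsilon\mapsto\epsilon+d_{L_{I_0}}a+\cdots$ for $a\in\Gamma(L_{I_0})$. So the goal is to gauge every integrable $\epsilon$ into the holomorphic-Poisson slice; the linearization is the $\delbar$-type equation $d_{L_{I_0}}a=-\epsilon_{\mathrm{bad}}$, whose solvability on a polydisk is governed by the Lie-algebroid (Dolbeault) cohomology. That cohomology vanishes by the Dolbeault--Grothendieck lemma, yielding a homotopy operator $h$ with $d_{L_{I_0}}h+h\,d_{L_{I_0}}=\mathrm{Id}$ on the relevant range.

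Because $h$ loses derivatives and the Maurer--Cartan equation is quadratic, a naive Picard iteration diverges, so I would run a smoothed Newton scheme in the tame-Fr\'echet framework of \cite{MR656198}: at each step solve the linearized equation with $h$, apply a smoothing operator, and estimate the new, purely quadratic, error through $\lb\epsilon,\epsilon\rb$ using tame product estimates for the bracket. The two analytic inputs are tame estimates for $h$ on a shrinking family of polydisks, and the interpolation/smoothing machinery that defeats the fixed derivative loss. The main obstacle is precisely this convergence: controlling the derivative loss of the homotopy operator against the quadratic nonlinearity is the analytic heart of the argument, whereas the algebra surrounding it is essentially formal. Convergence produces a genuine, smooth gauge transformation---a composition of diffeomorphisms and $B$-fields---carrying $\J$ into the holomorphic-Poisson form $\J_{(I,\sigma)}$.

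Finally, once $\J=\J_{(I,\sigma)}$ with $I$ integrable and $\sigma$ holomorphic Poisson, I would straighten $I$ to the standard $I_0$ by Newlander--Nirenberg; since $\pi_\J(0)=0$ was preserved throughout, the resulting $\sigma$ vanishes at the origin. Reassembling this with the symplectic factor produced in the first step (put in Darboux form) yields the asserted model $\R^{2(n-k)}\times\C^k$, with the standard symplectic structure on $\R^{2(n-k)}$ and a holomorphic Poisson structure vanishing at $0$ on $\C^k$, completing the proof.
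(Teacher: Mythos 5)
Your outline is essentially sound, but it reconstructs Bailey's original argument from \cite{MR3128977} rather than the route this paper takes. The statement is quoted here without proof, and the only part the paper re-proves is the complex-type case (Theorem \ref{16:05:53}); your first step --- splitting off the symplectic factor by Weinstein-type technology along the leaf so that the transverse structure has vanishing Poisson bivector at the point --- is not redone in the paper at all, so you would need to supply it exactly as Bailey does. For the remaining complex-type case the two strategies genuinely diverge. You run a smoothed Newton iteration directly on the Maurer--Cartan equation, with the analytic input being tame estimates for a local $\delbar$-homotopy operator (Dolbeault--Grothendieck) on a shrinking family of polydisks; this is self-contained, linearizes the complex structure along the way (modulo a final Newlander--Nirenberg step, which you correctly include), and its cost is precisely the delicate bookkeeping of derivative loss against domain shrinking that you flag as the analytic heart. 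The paper instead applies the anisotropic rescaling $\lambda_{\sqrt{t}}\circ m_t^{\ast}$ to $\J$, shows the family converges to a holomorphic Poisson structure on $\C^n$ with the standard complex structure (using that the bivector vanishes at the point), and then invokes the rigidity theorem (Theorem \ref{14:11:52}) on the closed unit ball, whose Nash--Moser scheme rests on $\delbar$-Neumann Hodge theory for a fixed $2$- and $3$-convex domain with boundary rather than on shrinking polydisks. What the paper's route buys is reuse of machinery that also handles neighbourhoods of submanifolds, at the price of producing only a $B$-field (no diffeomorphism, hence no linearized complex structure without an extra Newlander--Nirenberg application) and of a rescaling trick that, as the paper notes, works only when the submanifold is a point since $\tfrac{1}{t}m_t^{\ast}\beta$ need not converge otherwise. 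Your version buys locality and an already-linearized normal form, but all of the hard analysis remains to be carried out in the iteration you only sketch.
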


That is, locally a generalized complex manifold is just a product of the structures introduced in Examples \ref{11:37:30a} to \ref{11:37:30c}. However, it is not always possible to patch these local forms in a compatible way and there are many generalized complex manifolds that do not admit complex or symplectic structures nor are products of these. Concrete examples are given by the connected sums $m\C\mathbb{P}^2\# n\overline{\C\mathbb{P}}^2$: they admit complex or symplectic structures if and only if $m=1$ while  they carry a generalized complex structure precisely when $m$ is odd, which is precisely the condition for admitting an almost complex structure \cite{MR2574746}.

\subsection{Generalized Poisson submanifolds and branes}\label{sec:ABP}

Having introduced generalized complex structures last section, our next task is to introduce the different notions of submanifolds and eventually specialize to the type of submanifolds for which our neighbourhood theorem applies, namely, those whose local behaviour contains enough holomorphic data.

The first notion of submanifold arises naturally in the context of pullbacks of Dirac structures as introduced by Courant \cite{MR998124}. Given a Dirac structure $D$ on a manifold $M$ and a submanifold $i:Y \to M$, for every point $p\in Y$ we define a subspace of $\T_p Y$ by
\begin{equation}\label{eq:pullback}
i^* D_p = \{X + i^*\xi \in \T_p Y| i_*X + \xi \in D_p \}.
\end{equation}
The space $i^*D_p \subset \T_p Y$ is automatically Lagrangian, but the collection of spaces $D_p$ may not vary smoothly from point to point. If it does, involutivity of $D$ with respect to $\lb\cdot,\cdot\rb_H$ implies the involutivity of $i^*D$  with respect to $\lb\cdot,\cdot\rb_{i^*H}$, making it into a Dirac structure on $Y$. One condition that guarantees that $i^*D$ varies smoothly is that $D|_{i(Y)} \cap N^*Y$ has constant rank, where $N^*Y$ is the conormal bundle of $Y$.

\begin{defn}
Given a Dirac structure $D$ on $M$ and a submanifold $i:Y \to M$, the {\it pullback of $D$ to $Y$} is the structure $i^* D$ which is well defined as long as $i^*D$ is a smooth subbundle of $\T Y$. 
\end{defn}

We can rephrase this definition in a splitting independent way. Given a submanifold $i:Y \to M$, we can form on $i^*\T M$ the subbundle $N^*Y^\perp$,  the orthogonal complement of the conormal bundle of $N^*Y \subset i^*\T M$. Then the Courant bracket and the  pairing on $\T M$ induce naturally a Courant bracket and a pairing on $N^* Y^\perp/N^*Y \cong \T Y$. In this description, the pullback Dirac structure is given by
\begin{equation}\label{eq:reduced Dirac}
i^* D  = \frac{D \cap N^* Y^\perp + N^*Y }{N^*Y} \subset \frac{N^* Y^\perp}{N^*Y}.
\end{equation}

Back to the generalized complex world, given a generalized complex manifold $(M,H,\J)$ and a submanifold $i:Y \to M$, under the smoothness condition above, we can pull back the associated Dirac structure, $L$, to $Y$ to obtain a Dirac structure which may or may not be degenerate. If $i^*L$ is nondegenerate, it defines a generalized complex structure on $Y$.

\begin{defn}\label{14:46:48}
A \textsl{generalized complex submanifold} of $(M,H,\J)$ is a submanifold $i:Y \hookrightarrow M$ such that the Dirac pullback $i^*L$ exists and defines a generalized complex structure on $(Y,i^\ast H)$.  
\end{defn}

In the complex and symplectic contexts this notion of submanifold agrees with the notion of complex and symplectic submanifolds, respectively. We now want to specialize to a subclass of generalized complex submanifolds whose behaviour resembles that of complex submanifolds in complex geometry. To this end we will impose three requirements on our generalized complex submanifolds. We first give a short overview with intuitive explanations.

\begin{enumerate}
\item The first requirement is that $\J N^\ast Y=N^\ast Y$, which intuitively means that $\J$ is complex in directions normal to $Y$. Consequently, $Y$ itself inherits a generalized complex structure $\J_Y$ via the isomorphism $\T Y\cong (N^\ast Y)^\perp /N^\ast Y$, a quotient of two $\J$-invariant subspaces.
\item The second requirement is that the induced generalized complex structure $\J_Y$ obtained from step 1.\ is $B$-field equivalent to a holomorphic Poisson structure on $Y$. 
\item The first two requirements imply that $N^{\ast 1,0}Y$ is a complex vector bundle over a complex manifold. The third and final requirement is that $N^{\ast 1,0}Y$ is a holomorphic vector bundle.  
\end{enumerate}

Together, these three conditions intuitively amount to $\J$ being holomorphic in directions tangent and normal to $Y$. Concretely, they imply that there is a natural complex structure on a tubular neighborhood of $Y$ that we can compare against $\J$ itself, and this will play a major role in the rest of the paper. 

To make the above three requirements precise we need some terminology. Below, definitions \ref{GPS}, \ref{GCB} and \ref{APB} correspond to the above requirements 1.,2. and 3., respectively, and the corresponding class of submanifolds will be called Abelian Poisson branes. 

\begin{defn} \label{GPS}
A \textsl{generalized Poisson submanifold} of $(M,H,\J)$ is a submanifold $Y\subset M$ with the property that $\J N^\ast Y=N^\ast Y$.
\end{defn}

Generalized Poisson submanifolds are generalized complex submanifolds in the sense of Definition \ref{14:46:48} and they are Poisson submanifolds for $\pi_\J$. The induced generalized complex structure $\J_Y$ on $Y$ can be understood directly via the isomorphism $\T Y\cong (N^\ast Y)^\perp/N^\ast Y$, which is a quotient of two $\J$-invariant subspaces of $\T M$.

\begin{lem}\label{lem:lie algebra bundle}
Let $i:Y \to (M,H,\J)$ be a generalized Poisson submanifold and let $N^{*1,0}Y$ be the $+i$-eigenspace of $\J$ on $N^*_\C Y$. Then for every $y \in Y$ the space $N^{*1,0}_yY$ inherits a Lie algebra structure whose bracket is given by
$$[\alpha,\beta] = \lb\tilde\alpha,\tilde\beta\rb|_y,$$
where $\tilde\alpha,\tilde\beta \in C^\infty(L)$ are smooth extensions of $\alpha,\beta \in N_y^{*1,0}Y$. 
\end{lem}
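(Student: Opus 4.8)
The plan is to recognize $N^{*1,0}Y$ as a piece of the kernel of the anchor of the Lie algebroid $L$, and to run the standard ``isotropy Lie algebra'' argument with the extra care that the bracket remains in the conormal directions. The starting observation is twofold. First, $N^{*1,0}Y\subset L|_Y$: by definition it is the $+i$-eigenspace of $\J$ on the $\J$-invariant subbundle $N^*_\C Y$, hence it sits inside the $+i$-eigenbundle $L$ of $\J$ on all of $\T M_\C$. Second, $N^{*1,0}Y\subset T^*M_\C$ is conormal, so every $\alpha\in N^{*1,0}_yY$ has vanishing anchor, $\anchor(\alpha)=0$. These two facts — membership in $L$ and triviality of the anchor — drive the whole argument.

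First I would record that $(\Gamma(L),\lb\cdot,\cdot\rb)$ is a Lie algebra. Closure is the integrability hypothesis $\lb\Gamma(L),\Gamma(L)\rb\subset\Gamma(L)$; skew-symmetry follows from Lemma \ref{19:16:48}(iv) upon polarizing, since isotropy of $L$ gives $\langle u,v\rangle=0$ and hence $\lb u,v\rb+\lb v,u\rb=2\,d\langle u,v\rangle=0$; and the Jacobi identity is exactly Lemma \ref{19:16:48}(i) combined with this skew-symmetry. Next I would show that the pointwise expression $\lb\tilde\alpha,\tilde\beta\rb|_y$ is independent of the chosen extensions. If $\tilde\beta,\tilde\beta'$ agree at $y$, write $\tilde\beta-\tilde\beta'=\sum_j g_j s_j$ with $g_j(y)=0$ and $s_j\in\Gamma(L)$; then by Lemma \ref{19:16:48}(iii), $\lb\tilde\alpha,\tilde\beta-\tilde\beta'\rb|_y=\sum_j\big(g_j(y)\lb\tilde\alpha,s_j\rb|_y+(\anchor(\tilde\alpha)|_y\cdot g_j)\,s_j(y)\big)=0$, because $g_j(y)=0$ and $\anchor(\tilde\alpha)|_y=0$. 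Skew-symmetry then yields independence in the first slot as well, using $\anchor(\tilde\beta)|_y=0$. This well-definedness frees me to compute with whatever extensions are most convenient.

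The crux — and the step I expect to be the main obstacle — is to show the bracket lands in $N^{*1,0}_yY$ rather than merely in $L_y\cap T^*M_\C$. Exploiting the freedom just gained, I would pick extensions that are conormal all along $Y$: extend $\alpha,\beta$ first to sections of the bundle $N^{*1,0}Y\to Y$, then off $Y$ to sections $\tilde\alpha,\tilde\beta\in\Gamma(L)$. Writing $\tilde\alpha=A+a$, $\tilde\beta=B+b$ with $A=\anchor(\tilde\alpha)$ and $B=\anchor(\tilde\beta)$, this choice forces $A|_Y=B|_Y=0$. Evaluating the explicit formula for $\lb\cdot,\cdot\rb$ at $y$, the terms $[A,B]$, $\iota_B\,da$ and $\iota_B\iota_A H$ all vanish because $A$ and $B$ vanish at $y$, and Cartan's formula reduces the survivor to $(\mcL_A b)|_y=d(\iota_A b)|_y$. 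Since $\iota_A b=b(A)$ is a function vanishing along $Y$ (as $A|_Y=0$), its differential at $y$ is conormal, so $\lb\tilde\alpha,\tilde\beta\rb|_y\in N^*_\C Y$. Combined with involutivity $\lb\tilde\alpha,\tilde\beta\rb|_y\in L_y$ and the identity $L\cap N^*_\C Y=N^{*1,0}Y$, the bracket indeed lies in $N^{*1,0}_yY$.

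Finally I would verify the Lie algebra axioms on $N^{*1,0}_yY$. Skew-symmetry is immediate from skew-symmetry of $\lb\cdot,\cdot\rb$ on $\Gamma(L)$. For the Jacobi identity, the computation of the previous paragraph holds at every point of $Y$ (it used only $A|_Y=B|_Y=0$), so with these extensions the section $\lb\tilde\alpha,\tilde\beta\rb$ restricts along $Y$ to a section of $N^{*1,0}Y$; in particular it has vanishing anchor along $Y$ and is therefore an admissible extension of $[\alpha,\beta]$. Well-definedness then identifies $[[\alpha,\beta],\gamma]$ with $\lb\lb\tilde\alpha,\tilde\beta\rb,\tilde\gamma\rb|_y$, and the cyclic sum of these three iterated brackets vanishes by the Jacobi identity on $\Gamma(L)$, evaluated at $y$.
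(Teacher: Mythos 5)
Your proof is correct, and its overall architecture (well-definedness via Lemma \ref{19:16:48}(iii) and the vanishing of the anchor on conormal directions, skew-symmetry from isotropy, then the containment in $N^{*1,0}_yY$, then the Lie algebra axioms) matches the paper's. The one place where you take a genuinely different route is the crucial containment step. The paper argues invariantly: choosing extensions with $\tilde\alpha|_Y,\tilde\beta|_Y\in\Gamma(N^{*1,0}Y)$, it applies axiom (v) of Lemma \ref{19:16:48} to show $\langle\lb\tilde\alpha,\tilde\beta\rb,X\rangle=0$ for all $X\in\Gamma(TY)$, so the bracket annihilates $TY$ and hence lies in $N^*_\C Y\cap L=N^{*1,0}Y$. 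You instead unpack the splitting $\T M=TM\oplus T^*M$, write $\tilde\alpha=A+a$, $\tilde\beta=B+b$, and grind through the explicit bracket formula, isolating the survivor $d(\iota_Ab)|_y$ and observing it is conormal because $b(A)$ vanishes along $Y$. Both are valid; the paper's version is splitting-independent and shorter, while yours is more concrete and has the side benefit of making visible that the only potentially offending term is $d(b(A))$. You are also more explicit than the paper about the Jacobi identity (the paper dismisses the Lie algebra axioms in one line), and your observation that the computation holds at every point of $Y$ — so that $\lb\tilde\alpha,\tilde\beta\rb|_Y$ is itself an admissible extension of $[\alpha,\beta]$ — is exactly the detail needed to make that dismissal rigorous.
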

\begin{proof}
We only need to check that this bracket is well defined since the Lie bracket properties follow from the corresponding properties of the Courant bracket.

The first step is to check that the bracket does not depend on the choice of extensions $\tilde\alpha$ and $\tilde \beta$. We check that for $\tilde\beta$. A different extension of $\beta$ differs from $\tilde\beta$ by a section  $\tilde\beta'\in C^\infty(L)$ which vanishes at $y$ and we need to show that for such sections $\Cour{\tilde\alpha,\tilde\beta'} =0$  at $y$. Using bilinearity of the Courant bracket we may assume  that $\tilde\beta' = f \gamma$, where $f$ is a function that vanishes at $y$ and $\gamma \in C^\infty(L)$.  For such a section we have
$$\Cour{\tilde\alpha,f \gamma} = f\Cour{\tilde\alpha,\gamma} + (\mathcal{L}_{\anchor(\tilde\alpha)}f)\gamma.$$
Both terms on the right hand side vanish at $y$: the first because $f$ does so and the second because $\anchor(\tilde\alpha)$ vanishes at $y$.

Since $\tilde\alpha, \tilde \beta \in  C^\infty(L)$ and $L$ is isotropic, we have $\Cour{\tilde{\alpha},\tilde{\beta}} = - \Cour{\tilde{\beta},\tilde\alpha}$. So the bracket is skew and the argument above also implies that the bracket is independent of the extension  $\tilde\alpha$.

Next we check that the right hand side lies in $N_y^{*1,0}$. Since the result is independent of the extensions we may pick $\tilde\alpha,\tilde\beta$ such that $\tilde\alpha|_Y,\tilde\beta|_Y \in C^\infty(N^{*1,0}Y)$ and we must check that $\langle\lb\tilde\alpha,\tilde\beta\rb,X\rangle$ vanishes for all $X \in C^\infty(TY)$. Indeed, we have
$$\langle\lb\tilde\alpha,\tilde\beta\rb,X\rangle = \mathcal{L}_{\anchor \tilde\alpha}\langle\tilde\beta,X\rangle - \langle\tilde\beta,\lb\tilde\alpha,X\rb\rangle.$$
The first term vanishes because $\anchor\tilde\alpha$ vanishes at $y$. The second vanishes because, over $Y$, $\anchor\tilde\alpha = 0$ and $X \in TY$, hence the vector part of $\Cour{\tilde\alpha,X}$ vanishes over $Y$ and $\tilde\beta\in C^\infty(N^*Y)$. 
\end{proof}

\begin{defn}\label{APB}
A generalized Poisson submanifold $i:Y \to (M,H,\J)$ is {\it Abelian}\/ if $N^{*1,0}Y\to Y$ is a bundle of Abelian Lie algebras.
\end{defn}

If $Y$ is a generalized Poisson submanifold with complex codimension one, then it is automatically Abelian as any one-dimensional Lie algebra is Abelian. From the Poisson viewpoint, the Abelian condition means that the induced Poisson structure vanishes quadratically in normal directions.

The last condition we introduce ensures that $Y$ itself carries a holomorphic structure.
\begin{defn}\label{GCB}
A \textsl{generalized complex brane} in $(M,H,\J)$ is a submanifold $Y\subset M$ together with a $\J$-invariant maximal isotropic subbundle $\tau \subset N^*Y^\perp$ such that $\anchor\colon\tau\to TY$ is surjective and the image of $\tau$ in $N^*Y^\perp/N^*Y$ is involutive.
\end{defn}
Given a brane $(Y,\tau)$, it follows that $N^*Y \subset \tau$ as by definition $N^*Y$ annihilates all elements in $N^*Y^\perp$ and $\tau$ is maximal. Dimension count shows that we have an exact sequence
\begin{equation}\label{eq:brane exact seq}
0 \to N^*Y \to \tau \to TY\to 0.
\end{equation} 
Using a splitting $\T M|_Y=TM|_Y\oplus T^\ast M|_Y$, the brane condition is equivalent to the existence of a two-form $F\in\Omega^2(Y)$ satisfying $dF=i^\ast H$, and for which 
\begin{align*}
\tau(F):=\{X+\xi\in TY\oplus T^\ast M| \ \xi|_{TY}= \iota_XF \} \subset \T M|_Y
\end{align*}
is $\J$-invariant.

If $\J$ is induced by a complex structure, a submanifold $i:Y\to M$ is a brane only if $Y$ is a complex submanifold. For symplectic manifolds,  branes include Lagrangian submanifolds.

The manifolds we are interested in are simultaneouly Abelian generalized Poisson submanifolds and branes. 
\begin{defn}\label{Poisson brane}
 An \textsl{(Abelian) Poisson brane} is an (Abelian) generalized Poisson submanifold that in addition carries the structure of a generalized complex brane.
\end{defn}

In the next few lemmas we make evident the holomorphic nature of Abelian Poisson branes.

The first lemma regards Poisson branes in general. Let $i:(Y,\tau) \to (M,H,\J)$ be a Poisson brane and $\J_Y$ be the induced generalized complex structure on $Y$.  Since $(Y,\tau)$ is a Poisson brane, the image of $\tau$ in $N^*Y^\perp/N^* Y$ provides an isotropic complement to $T^*Y$ which is involutive and invariant under the induced generalized complex structure. It follows from Example \ref{11:37:30c} that the generalized complex structure on $Y$ is given by a holomorphic Poisson structure. Since all data that we used to construct the holomorphic Poisson structure and corresponding complex structure was provided by the brane, these are naturally induced structures. We summarise this in the first lemma:

\begin{lem}\label{lem:pb=>hol poisson}
If $i:(Y,\tau) \to (M,H,\J)$ is a Poisson brane and $\J_Y$ is the induced generalized complex structure on $Y$ then $\J_Y$ is naturally equivalent to a holomorphic Poisson structure on $Y$. In particular $Y$ naturally inherits a complex structure.
\end{lem}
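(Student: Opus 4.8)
The plan is to reduce the statement to the converse discussed in Example \ref{11:37:30c}: in order to exhibit $\J_Y$ as $B$-field equivalent to a holomorphic Poisson structure it suffices to produce an involutive, isotropic splitting of the anchor $\anchor\colon \T Y \to TY$ whose image is $\J_Y$-invariant. I claim that the brane subbundle $\tau$, after reduction to $\T Y\cong N^*Y^\perp/N^*Y$, provides exactly such a splitting.

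First I would set $\bar\tau := \tau/N^*Y$, viewed inside $N^*Y^\perp/N^*Y \cong \T Y$; this is legitimate because $N^*Y \subset \tau$, as noted after Definition \ref{GCB}. I would then check the four properties demanded by Example \ref{11:37:30c}. \emph{Invariance:} since $\tau$ is $\J$-invariant and, by the generalized Poisson condition $\J N^*Y = N^*Y$, so is $N^*Y$, the quotient $\bar\tau$ is invariant under the induced structure $\J_Y$. \emph{Isotropy:} the pairing on $\T Y$ is the one induced from $N^*Y^\perp$ (whose radical is exactly $N^*Y$), so isotropy of $\tau$ descends to isotropy of $\bar\tau$. \emph{Splitting of the anchor:} the exact sequence \eqref{eq:brane exact seq} shows that $\anchor$ maps $\tau$ onto $TY$ with kernel $N^*Y$, hence descends to an isomorphism $\bar\tau \xrightarrow{\ \sim\ } TY$; in particular $\bar\tau$ is half-dimensional and isotropic, i.e.\ Lagrangian, and is genuinely the image of a splitting. \emph{Involutivity:} this is built into Definition \ref{GCB}, which requires precisely that the image of $\tau$ in $N^*Y^\perp/N^*Y$ be involutive.

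With $\bar\tau$ in hand I would invoke the converse in Example \ref{11:37:30c} to put $\J_Y$ into the block form \eqref{13:30} by a $B$-field transform, thereby reading off a complex structure $I$ on $Y$ and a bivector whose imaginary part defines the holomorphic Poisson structure. Here the relevant $B$-field is the brane two-form $F \in \Omega^2(Y)$ with $dF = i^*H$: transforming by $F$ both carries $\J_Y$ into the desired block form and trivializes the background three-form $i^*H$, placing us in the $H=0$ setting in which \eqref{13:30} defines an honest holomorphic Poisson structure. Naturality is clear, since $\bar\tau$, and hence the resulting gauge, are canonically determined by the brane datum $\tau$.

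The step requiring the most care is the bookkeeping of the background three-form together with the matching of the two notions of involutivity: I must confirm that involutivity in the reduced Courant algebroid $\T Y \cong N^*Y^\perp/N^*Y$, equipped with the bracket $\lb\cdot,\cdot\rb_{i^*H}$, is exactly the hypothesis consumed by Example \ref{11:37:30c}, and that the $B$-field $F$ simultaneously clears $i^*H$ and straightens $\J_Y$. The dimension count verifying that $\bar\tau$ is Lagrangian rather than merely isotropic is what guarantees it is the image of a bona fide splitting, so I would make that explicit.
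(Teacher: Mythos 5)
Your proposal is correct and follows essentially the same route as the paper's proof, which simply observes that the image of $\tau$ in $N^*Y^\perp/N^*Y$ is an involutive, isotropic, $\J_Y$-invariant complement to $T^*Y$ and then invokes Example \ref{11:37:30c}. Your version merely spells out the verifications (and the role of $F$ in clearing $i^*H$) that the paper leaves implicit.
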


Since for a generalized Poisson submanifold $\J N^*Y = N^*Y$, the conormal bundle of $Y$ is a complex vector bundle over $Y$. In the case of an Abelian Poisson brane we can  further endow $N^*Y$ with a holomorphic structure.

\begin{lem}\label{lem:apb=>hol normal bundle}
Let $i:(Y,\tau) \to (M,H,\J)$ be an Abelian Poisson brane. Then $N^{*1,0}Y$ admits the structure of a generalized holomorphic bundle over $Y$ with the partial connection given by
$$\delbar_v \alpha = \lb\tilde v,\tilde \alpha\rb|_Y,$$
where $v\in C^\infty(L_Y)$, $\alpha \in C^\infty(N^{*1,0}Y)$ and $\tilde{v},\tilde \alpha$ are sections of $L$ such that $\tilde\alpha$ extends $\alpha$ and $\tilde{v}|_Y \in C^\infty((N^*Y^\perp)^{1,0})$ is a lift of $v$.

In particular, the partial connection above makes $N^{*1,0}$ into a holomorphic bundle for the underlying complex manifold $Y$.
\end{lem}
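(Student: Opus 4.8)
The plan is to verify directly that the stated formula defines a flat $L_Y$-connection on $N^{*1,0}Y$ — that is, a representation of the Lie algebroid $L_Y$, which is exactly a generalized holomorphic structure — and then to restrict that representation to the complex structure underlying $\J_Y$. Throughout I would use the identifications $N^{*1,0}Y = L\cap (N^\ast Y)_\C$ and $(N^\ast Y^\perp)^{1,0}=L\cap (N^\ast Y^\perp)_\C$, together with the two elementary facts $\anchor(N^\ast Y^\perp)=TY$ (so $\anchor(\tilde v)|_Y\in TY$ for any lift) and $\anchor(\tilde\alpha)|_Y=0$ (since $\alpha$ is conormal). Because $\J$ preserves $N^\ast Y$, hence $N^\ast Y^\perp$, the anchor-equivariant projection $N^\ast Y^\perp\to N^\ast Y^\perp/N^\ast Y\cong \T Y$ restricts to a surjection $(N^\ast Y^\perp)^{1,0}\to L_Y$ with kernel $N^{*1,0}Y$; this is what makes a lift $\tilde v|_Y$ meaningful, determining it up to a section of $N^{*1,0}Y$. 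To see the right-hand side lands in $N^{*1,0}Y$, note involutivity of $L$ gives $\lb \tilde v,\tilde\alpha\rb\in\Gamma(L)$, so it suffices that $\lb\tilde v,\tilde\alpha\rb|_Y$ be conormal: its anchor $[\anchor\tilde v,\anchor\tilde\alpha]$ vanishes on $Y$ since $\anchor\tilde\alpha|_Y=0$ and $\anchor\tilde v|_Y$ is tangent to $Y$, and pairing with $X\in\Gamma(TY)$ and applying Lemma \ref{19:16:48}(v) leaves only tangential derivatives of quantities vanishing along $Y$, which are zero.

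The heart of the argument is well-definedness, and the \emph{Abelian} hypothesis is precisely what makes it work. Independence of the extension $\tilde\alpha$, and of the extension of $\tilde v$ off $Y$, is the computation already used in Lemma \ref{lem:lie algebra bundle}: writing a section of $L$ that vanishes on $Y$ as $\sum f_i\gamma_i$ with $f_i|_Y=0$ and invoking Lemma \ref{19:16:48}(iii), both terms vanish on $Y$ because $f_i$ vanishes there and because the surviving derivatives are either $\anchor\tilde\alpha\cdot f_i$ (zero, as $\anchor\tilde\alpha|_Y=0$) or the tangential derivative $\anchor\tilde v\cdot f_i$ (zero, as $f_i|_Y=0$ and $\anchor\tilde v|_Y\in TY$). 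The genuinely new point is independence of the lift $\tilde v|_Y\in(N^\ast Y^\perp)^{1,0}$: two admissible lifts differ by a section $w\in\Gamma(N^{*1,0}Y)$, and changing $\tilde v$ by an extension $\tilde w$ of $w$ alters $\delbar_v\alpha$ by $\lb\tilde w,\tilde\alpha\rb|_Y$, which is exactly the fibrewise bracket $[w,\alpha]$ of Lemma \ref{lem:lie algebra bundle}. By Definition \ref{APB} this bracket vanishes, so $\delbar_v\alpha$ is independent of the lift. This is the step I expect to be the main obstacle: the construction simply fails without the Abelian condition.

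With well-definedness established, the connection axioms are routine applications of Lemma \ref{19:16:48}(iii). Tensoriality $\delbar_{fv}\alpha=f\,\delbar_v\alpha$ holds because scaling the first slot produces a correction proportional to $\anchor\tilde\alpha\cdot f$, which vanishes on $Y$; the Leibniz rule $\delbar_v(f\alpha)=(\anchor(\tilde v)|_Y\cdot f)\,\alpha+f\,\delbar_v\alpha$ follows from $\lb\tilde v,\tilde f\tilde\alpha\rb=\tilde f\lb\tilde v,\tilde\alpha\rb+(\anchor\tilde v\cdot\tilde f)\tilde\alpha$ and $\anchor(\tilde v)|_Y\in TY$, which identifies the coefficient with the induced $L_Y$-anchor applied to $f$. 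For flatness I would first observe that, for lifts $\tilde v_1,\tilde v_2$, the section $\lb\tilde v_1,\tilde v_2\rb$ restricts to an admissible lift of the reduced bracket $\lb v_1,v_2\rb_{L_Y}$: its anchor $[\anchor\tilde v_1,\anchor\tilde v_2]$ is tangent to $Y$, so it lies in $(N^\ast Y^\perp)^{1,0}$, and it projects to the Courant bracket defining the Lie algebroid $L_Y$. Since moreover $\lb\tilde v_2,\tilde\alpha\rb$ is itself a legitimate extension of $\delbar_{v_2}\alpha$, the Jacobi identity of Lemma \ref{19:16:48}(i), restricted to $Y$, reads exactly $\delbar_{v_1}\delbar_{v_2}\alpha-\delbar_{v_2}\delbar_{v_1}\alpha=\delbar_{\lb v_1,v_2\rb_{L_Y}}\alpha$, which is flatness. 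Thus $N^{*1,0}Y$ is a generalized holomorphic bundle for $\J_Y$.

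For the final ``in particular'' statement I would use Lemma \ref{lem:pb=>hol poisson}: in a holomorphic gauge as in Example \ref{11:37:30c} one has $L_Y=T^{0,1}Y\oplus e^\sigma(T^{\ast 1,0}Y)$, so $T^{0,1}Y$ sits inside $L_Y$ as an involutive Lie subalgebroid on which the $L_Y$-anchor is the identity. Restricting the flat $L_Y$-connection constructed above to this subalgebroid yields a flat $(0,1)$-connection, i.e.\ a Dolbeault operator $\delbar$ with $\delbar^2=0$ on $N^{*1,0}Y$; by the Koszul--Malgrange/Newlander--Nirenberg theorem this is precisely a holomorphic structure on $N^{*1,0}Y$ over the complex manifold $Y$.
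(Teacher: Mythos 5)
Your proof is correct and follows essentially the same route as the paper's: well-definedness via the Courant bracket's Leibniz property (with the Abelian hypothesis killing the lift ambiguity through the fibrewise bracket of Lemma \ref{lem:lie algebra bundle}), the connection axioms via isotropy of $L$ and Lemma \ref{19:16:48}(iii), flatness via the Jacobi identity, and the holomorphic structure by restricting to $T^{0,1}Y$ inside the holomorphic Poisson gauge of Lemma \ref{lem:pb=>hol poisson}. You simply spell out several steps the paper leaves implicit (that the bracket lands in $N^{*1,0}Y$, and that $\lb\tilde v_1,\tilde v_2\rb$ is an admissible lift of the reduced bracket), which is fine.
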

\begin{proof}
There is a number of things we need to check. First, we need to check that the proposed expression for the partial connection does not depend on the particular lift (and extension) $\tilde\alpha$ and $\tilde{v}$ of $\alpha$ and $v$. Then we must show that the proposed expression does indeed define a partial connection. 

Once a lift for $v$ is fixed, the proof that the expression does not depend on the extensions is similar to that of Lemma \ref{lem:lie algebra bundle} so we will omit it. To show that the partial connection is independent of the lift of $v$, we recall that the generalized complex structure on $Y$ is obtained from the quotient of complex vector bundles
$\T Y \cong N^*Y^\perp/N^*Y$, hence if $\tilde v$ is a lift of $v$ over $Y$, any other lift will differ from $\tilde{v}$ by a section $\tilde \beta \in C^\infty(N^{*1,0}Y)$ which must still be extended to a section of $L$. Therefore to show that the expression is independent of the lift we compute
$$ \lb\tilde v+\tilde\beta,\tilde \alpha\rb|_Y = \lb\tilde v,\tilde \alpha\rb|_Y + \lb\tilde\beta,\tilde\alpha\rb|_Y=\lb\tilde v,\tilde \alpha\rb|_Y,$$
where the term $\lb\tilde\beta,\tilde\alpha\rb|_Y$ vanishes because $N^{*1,0}Y$ is a bundle of Abelian Lie algebras. 

The fact that $\delbar$ defined this way is $C^\infty$-linear on $v$ follows from isotropy of $L$, property iii) from Lemma \ref{19:16:48} and the fact that $\alpha$ has no tangent component. Finally, the Jacobi identity for the Courant bracket implies that $\delbar^2=0$.

Since the induced generalized complex structure on $Y$ is holomorphic Poisson, a generalized holomorphic bundle is automatically holomorphic for the underlying complex structure.
\end{proof}

\section{Deformation theory of generalized complex structures}\label{14:20:32}

Next we study the deformation theory of generalized complex manifolds. We start,  in Section \ref{subsec:framework}, with the general framework of deformations of Dirac structures, introduced in \cite{MR2811595,MR1472888}. Eventually we narrow our focus to deformations of holomorphic Poisson structures and in Section \ref{sec:17Jan:0027} we prove the result on stability of  holomorphic Poisson structures on compact manifolds. The proof of this result helps us to set our strategy for the case of manifolds with boundary and a critical look at the proof signals where we can expect difficulties later.

\subsection{The framework for deformations}\label{subsec:framework}

Given a pair of complimentary almost Dirac structures, $L, \tL\subset \T M_\C$, we can identify $\tL \cong L^\ast$ using the natural pairing:
\begin{align*}
u(v):=2\langle u,v\rangle \hspace{10mm} \forall u\in \widetilde{L}, \ v\in L. 
\end{align*}
This allows us to define a de Rham differential $d_L:C^\infty(\Lambda^k \widetilde{L})\rightarrow C^\infty(\Lambda^{k+1} \widetilde{L})$ in the usual way by
\begin{equation}\label{eq:dl}
d_L\alpha(v_0,\ldots,v_k):=\sum_{i} (-1)^iv_i\cdot \alpha(\ldots, \widehat{v_i},\ldots ) +\sum_{i<j}(-1)^{i+j} \alpha(\lb v_i,v_j\rb^L,\ldots, \widehat{v_i},\ldots,\widehat{v_j},\ldots),
\end{equation}
where $\alpha\in C^\infty(\Lambda^k \widetilde{L})$, $v_0,\ldots,v_k\in C^\infty(L)$, and where $\lb v_i,v_j\rb^L$ denotes the component of $\lb v_i,v_j\rb$ in $L$ with respect to the decomposition $\T M_\C=L\oplus \widetilde{L}$. Although in general $d_L$ depends on the choice of $\tL$ we suppress this from the notation. 

If $L$ is integrable we have $d_L^2=0$, while if $\tL$ is integrable then the Courant bracket on $\tL$ extends to give a Lie bracket $\lb \cdot,\cdot \rb$ on $C^\infty(\Lambda^\ast \widetilde{L})$. 
If $L$ and $\tL$ are both integrable, which we will assume from now on, then the triple $(C^\infty(\Lambda^\bullet \widetilde{L}), \lb \cdot, \cdot \rb, d_L)$ constitutes a differential graded Lie algebra, where the grading on $C^\infty(\Lambda^\bullet \widetilde{L})$ is shifted by $1$ (see \cite{MR1472888}).

We can use $\tL$ to describe small deformations of $L$ as an almost Dirac structure. For $\ve\in C^\infty(\Lambda^2 \widetilde{L})$ we define another almost Dirac structure
\begin{align*}
L_\ve:=\{u+\iota_u\varepsilon | u\in L\},
\end{align*}
where $\iota_u\ve=\ve(u)\in\tL$ denotes the result of interior contraction. Note that every deformation of $L$ that is transverse to $\tL$ can be described in this way for a unique $\ve$, and this applies in particular to all small deformations of $L$. 
As $\tL$ remains complementary to $L_\ve$ we can again identify $\widetilde{L}\cong L_\ve^\ast$ for all $\ve$. This allows us to regard the corresponding de Rham operators as differential operators in a fixed vector bundle: $d_{L_\ve}:C^\infty(\Lambda^k \widetilde{L})\rightarrow C^\infty(\Lambda^{k+1} \widetilde{L})$. As shown in \cite{MR1472888}, the Dirac structure $L_\ve$ is integrable if and only if $\ve$ satisfies the \textsl{Maurer-Cartan equation} 
\begin{align}\label{13:47:45} 
d_L\varepsilon +\frac{1}{2}\lb \varepsilon,\varepsilon \rb=0.
\end{align} 
If this is the case then the corresponding deformed operator $d_{L_\ve}$ on $C^\infty(\Lambda^\bullet \widetilde{L})$ is given by 
\begin{align*}
d_{L_\varepsilon}=d_L+\lb \varepsilon, \cdot \rb.
\end{align*}

Having established that Maurer--Cartan elements in $ C^\infty(\Lambda^{2} \widetilde{L})$ describe nearby integrable structures, the next step in the study of deformations is to determine which deformations are equivalent to each other. That is, we need to describe how symmetries of $\T M$ act on deformations. If $F:\T M\rightarrow \T M$ is an automorphism which is sufficiently small (i.e.\ close to the identity) and if $\ve\in C^\infty(\Lambda^2\tL)$ describes a small deformation $L_\ve$, then $F(L_\ve)$ is another small deformation of $L$ that we can therefore write as $L_{F\cdot \ve}$ for a unique element $F\cdot \ve\in C^\infty (\Lambda^2\tL)$. In particular, for $u\in C^\infty(\T M)$ with flow $F_{tu}$ (see \eqref{10:12:54}) we can consider $F_{tu}\cdot \ve$ and differentiate it at $t=0$, inducing an infinitesimal action of $C^\infty(\T M)$ on $C^\infty (\Lambda^2\tL)$. Explicitly, one can show that 
\begin{align}\label{4:10:01}
\left. \frac{d}{dt}\right |_{t=0} F_{tu}\cdot \ve=d_{L_\ve} (u^{\tL}),
\end{align}
where $u=u^{L_\ve}+u^{\tL}$ is the decomposition of $u$ with respect to $\T M_\C=L_\ve\oplus \tL$.


\subsubsection*{Holomorphic Poisson structures}

The study of deformations becomes more concrete for certain special types of generalized complex manifolds. Here we focus on holomorphic Poisson structures.

Let $(M,I,\sigma)$ be a holomorphic Poisson manifold (with zero three-form on $M$) and let 
\begin{align*}
L:=T^{0,1}M\oplus e^\sigma(T^{\ast 1,0}M)=\{X+\sigma(\xi)+\xi|\ X\in T^{0,1}M, \ \xi \in T^{\ast 1,0}M\}
\end{align*}
be the corresponding Dirac structure. We choose $\tL:=T^{1,0}M\oplus T^{\ast 0,1}M$ as an integrable Dirac complement to $L$, for which one can compute that $d_L=\delbar+\lb \sigma, \cdot \rb$. 
A deformation $\ve\in C^\infty(\Lambda^2\tL)$ of $L$ is then integrable if and only if 
\begin{align}\label{09:10:55}
0=d_L\ve+\frac{1}{2}\lb \ve,\ve\rb=\delbar \ve+\lb \sigma,\ve\rb +\frac{1}{2}\lb \ve,\ve\rb. 
\end{align} 
Since $\widetilde{L}$ is a direct sum, its exterior algebra admits a natural splitting. It is useful to express the deformation parameter in terms of this splitting.  So we write $\ve=\ve_1+\ve_2+\ve_3$ with respect to the decomposition
\begin{align}\label{14:30:49}
\Lambda^2\tL = \Lambda^2 T^{1,0}M \oplus ( T^{1,0}M\otimes T^{\ast 0,1}M ) \oplus \Lambda^2 T^{\ast 0,1}M.									
\end{align}
A useful point of view is to interpret $\ve_2\in C^\infty(T^{1,0}M\otimes T^{\ast 0,1}M)$ as a (not necessarily integrable) deformation of the complex structure $I$. With this in mind we define the corresponding $\delbar$ operator:
\begin{align*}
\delbar_{\ve_2}:=\delbar + \lb \ve_2,\cdot\rb :\Omega^{0,q}(\Lambda^p T^{1,0}M)\rightarrow \Omega^{0,q+1}(\Lambda^p T^{1,0}M),
\end{align*}
where $\Omega^{0,q}(\Lambda^p T^{1,0}M):=C^\infty(\Lambda^{q}T^{\ast 0,1}M \otimes \Lambda^p T^{1,0}M)$. The Maurer-Cartan equation \eqref{09:10:55} can then be decomposed into four separate equations as follows:
\begin{alignat}{3}
              \Omega^{0,0}(\Lambda^3 T^{1,0}M): \hspace{15mm}& &&0=\lb \sigma,\ve_1\rb+\frac{1}2 \lb \ve_1,\ve_1\rb, \label{13:43:33} \\
    \Omega^{0,1}(\Lambda^2T^{1,0}M):\hspace{15mm}& &&0=\delbar_{\ve_2}\ve_1 + \lb \sigma,\ve_2 \rb, \label{13:43:53} \\
    \Omega^{0,2}(\Lambda^1T^{1,0}M):\hspace{15mm}& && 0= \delbar \ve_2 +\frac{1}{2}\lb \ve_2,\ve_2\rb+ \lb\sigma+\ve_1,\ve_3\rb,          \label{12:37:22} \\
 \Omega^{0,3}(\Lambda^0T^{1,0}M): \hspace{15mm}&  		&&0=\delbar_{\ve_2}\ve_3.   \label{1:07:34}
\end{alignat}
To gain some intuition for these equations, observe that the bracket $\lb\sigma+\ve_1,\ve_3\rb$ in \eqref{12:37:22} gives the obstruction for $I_{\ve_2}$, the deformation of $I$ with respect to $\ve_2$, to be integrable\footnote{Note that $I_{\ve_2}$ is integrable precisely when $\delbar_{\ve_2}^2=0$, or equivalently when $\delbar \ve_2+\frac{1}{2}\lb \ve_2,\ve_2\rb=0$. }. In the special case that this obstruction vanishes, the other three equations can be interpreted in clear terms: Equation \eqref{13:43:33} together with $\lb\sigma,\sigma\rb=0$ implies that $\sigma+\ve_1$ defines a Poisson structure. Equation \eqref{13:43:53} is equivalent to $\sigma+\ve_1$ being holomorphic for the complex structure $I_{\ve_2}$. Finally, \eqref{1:07:34} states that $\ve_3$ defines a holomorphic two-form for $I_{\ve_2}$. Note in particular that the deformed Dirac structure $L_\ve$ is given by a holomorphic Poisson structure if and only if $\ve_3=0$.

It is rare that a deformation of a holomorphic Poisson structure presents itself naturally already with vanishing component $\ve_3$. To achieve that, one needs to find a holomorphic gauge for it, that is we need to find an equivalent deformation using the infinitesimal action of one-forms on deformations as defined in \eqref{4:10:01} explicitly in this context. For $\xi\in\Omega^1(M)$, its decomposition with respect to $\T M_\C=L_\ve\oplus \tL$ is given by 
\begin{align*}
\xi=\big(\xi^{1,0}+(\sigma+\ve_1)(\xi^{1,0})+\ve_2(\xi^{1,0})\big)+\big(\xi^{0,1}-(\sigma+\ve_1)(\xi^{1,0})-\ve_2(\xi^{1,0})\big).
\end{align*}
In particular, if $F_{t\xi}=e^{td\xi}$ denotes the flow of $\xi$ (see \ref{10:12:54}), we deduce from (\ref{4:10:01}) that
\begin{align*} 
\left. \frac{d}{dt}\right |_{t=0} F_{t\xi}\cdot \ve =(\delbar+\lb \sigma+ \ve,\cdot \rb) \big(\xi^{0,1}-(\sigma+\ve_1)(\xi^{1,0})-\ve_2(\xi^{1,0})\big).
\end{align*}
We are mainly interested in the component that lies in $\Omega^{0,2}(M)$, which is given by
\begin{align}
\left. \frac{d}{dt}\right |_{t=0} \big(F_{t\xi}\cdot \ve \big)_3=& \ \delbar_{\ve_2}\big(\xi^{0,1}-\ve_2(\xi^{1,0})\big)-\lb \ve_3,(\sigma+\ve_1)(\xi^{1,0})\rb. \label{13:21:37}
\end{align}

It is actually also possible to describe the action of closed two-forms.
\begin{lem}\label{17:03:57} Let $B$ be a sufficiently small closed real two-form. Then
\begin{align}
(e^B\cdot \ve)_1=& (\ve_1-\sigma B^{2,0}(\sigma+\ve_1))(1+B^{2,0}(\sigma+\ve_1))^{-1}\label{09:31:48}\\
(e^B\cdot \ve)_2=& (\ve_2+B^{1,1}(\sigma+\ve_1))(1+B^{2,0}(\sigma+\ve_1))^{-1}\\
\label{17:36:15} 	(e^B\cdot \ve)_3=& \ve_3+B^{0,2}+B^{1,1}\ve_2-(\ve_2+B^{1,1}(\sigma+\ve_1))(1+B^{2,0}(\sigma+\ve_1))^{-1}(B^{1,1}+B^{2,0}\ve_2) 
\end{align} 
\end{lem}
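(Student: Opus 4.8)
The action $\ve \mapsto e^B\cdot\ve$ is pointwise and purely algebraic: it is characterized by the identity $e^B(L_\ve) = L_{e^B\cdot\ve}$, so the whole statement reduces to a fibrewise computation on the Lagrangian Grassmannian. The plan is to first exploit the fact that the reference structure is \emph{itself} a deformation of a complex structure. Indeed, with the fixed complement $\tL = T^{1,0}M \oplus T^{\ast 0,1}M$ one has $L = T^{0,1}M \oplus e^\sigma(T^{\ast 1,0}M) = (L_I)_\sigma$, where $L_I = T^{0,1}M\oplus T^{\ast 1,0}M$ and $\sigma \in C^\infty(\Lambda^2 T^{1,0}M) \subset C^\infty(\Lambda^2 \tL)$ sits in the first slot of \eqref{14:30:49}. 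Deformations with respect to a fixed complement add: since $\tL$ is isotropic, for $v\in \tL$ and $u\in L_I$ the pairing $\langle v, u + \iota_u\alpha\rangle = \langle v,u\rangle$ is insensitive to $\alpha\in C^\infty(\Lambda^2\tL)$, whence the graph of $\beta$ over $(L_I)_\alpha$ coincides with the graph of $\alpha+\beta$ over $L_I$, i.e.\ $(L_I)_{\alpha+\beta} = ((L_I)_\alpha)_\beta$. Consequently $L_\ve = (L_I)_{\sigma + \ve}$ and, applying $e^B$,
\begin{align*}
e^B(L_\ve) = e^B\big((L_I)_{\sigma+\ve}\big) = (L_I)_{e^B\cdot(\sigma+\ve)} = L_{e^B\cdot(\sigma+\ve) - \sigma},
\end{align*}
where $e^B\cdot(\sigma+\ve)$ now denotes the $B$-field action on deformations of the \emph{untwisted} structure $L_I$. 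Hence $e^B\cdot\ve = e^B\cdot(\sigma+\ve) - \sigma$; this already explains why $\sigma+\ve_1$ appears uniformly in the formulas while the bare $-\sigma$ correction affects only the first component.

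It remains to compute the untwisted action $\mu\mapsto e^B\cdot\mu$ for $\mu = \mu_1+\mu_2+\mu_3$ a deformation of $L_I$. I would do this by block-decomposing $e^B$ with respect to $\T M_\C = L_I\oplus\tL$. Writing $B = B^{2,0}+B^{1,1}+B^{0,2}$ and using $e^B(V+\zeta) = V+\zeta+\iota_VB$, a short calculation gives $e^B = \left(\begin{smallmatrix} a & b\\ c & d\end{smallmatrix}\right)$ with $a,b,c,d$ linear in the components of $B$ (for instance $b\colon \tL\to L_I$ sends $A + \bar\beta \mapsto B^{2,0}A \in T^{\ast 1,0}M$). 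Viewing $\mu$ as the graph of a map $L_I\to\tL$, the transformed Dirac structure is the graph of the fractional-linear expression
\begin{align*}
e^B\cdot\mu = (c + d\mu)(a + b\mu)^{-1}.
\end{align*}
The operator $a+b\mu$ is block-triangular on $L_I = T^{0,1}M\oplus T^{\ast 1,0}M$ with diagonal blocks the identity and $1 + B^{2,0}\mu_1$, the latter invertible for $B$ small; this produces the ubiquitous factor $(1 + B^{2,0}(\sigma+\ve_1))^{-1}$. Expanding $(c+d\mu)(a+b\mu)^{-1}$ and projecting onto the three summands of \eqref{14:30:49} yields $e^B\cdot\mu$ componentwise, and substituting $\mu = \sigma+\ve$ and subtracting $\sigma$ from the first component gives exactly \eqref{09:31:48}--\eqref{17:36:15}.

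The main obstacle is bookkeeping rather than conceptual. To make the fractional-linear expression collapse to the stated clean forms one must fix consistent conventions identifying each of $\sigma,\ve_1,\ve_2,\ve_3$ and the components $B^{2,0},B^{1,1},B^{0,2}$ with bundle maps between $T^{1,0}M, T^{0,1}M$ and their duals, and then carefully track the transposes and the signs coming from skew-symmetrization---in particular the two faces of $B^{1,1}$ (as a map $T^{0,1}M\to T^{\ast 1,0}M$ and as $T^{1,0}M\to T^{\ast 0,1}M$) enter the $\ve_2$- and $\ve_3$-components differently. Reality of $B$ guarantees these faces are compatible, so that the off-diagonal pieces assemble into the single products $B^{1,1}(\sigma+\ve_1)$ and $B^{1,1}+B^{2,0}\ve_2$ appearing in \eqref{17:36:15}. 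Once these conventions are pinned down, the verification is a direct, if slightly tedious, matrix manipulation.
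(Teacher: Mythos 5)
Your proposal is correct and follows essentially the same route as the paper, which simply asserts that writing out both sides of $e^B(L_\ve)=L_{e^B\cdot\ve}$ relative to $L_I=T^{0,1}M\oplus T^{\ast 1,0}M$ (i.e.\ with $\sigma$ absorbed into the deformation, exactly your first step) is ``a tedious yet straightforward calculation.'' Your organization of that calculation---additivity of graph deformations over a fixed isotropic complement, followed by the fractional-linear formula $(c+d\mu)(a+b\mu)^{-1}$ with the block-triangular $a+b\mu$ producing the factor $(1+B^{2,0}(\sigma+\ve_1))^{-1}$---does reproduce all three stated formulas, including the $\ve_1-\sigma B^{2,0}(\sigma+\ve_1)$ numerator after subtracting $\sigma$.
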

\begin{proof}
By definition we have $e^B(L_\ve)=L_{e^B\cdot \ve}$, where $L=T^{0,1}M\oplus T^{\ast 1,0}M$. Writing out both sides, a tedious yet straightforward calculation, yields the above three equations. 
\end{proof}
\begin{rem}\label{11:43:33}
Lemma \ref{17:03:57} treats the various components of $\ve$ and $B$ as (skew-symmetric) endomorphisms. For instance, $\ve_2\in C^\infty(T^{1,0}M\otimes T^{\ast 0,1}M)$ is considered as a map $\ve_2:T^{\ast 1,0}M\rightarrow T^{\ast 0,1}M$, the $(2,0)$-part of $B$ as a map $B^{2,0}:T^{ 1,0}M\rightarrow T^{\ast 1,0}M$, and so on. From these expressions we see that $B$ being small in the hypothesis of Lemma \ref{17:03:57} amounts to the endomorphism $1+B^{2,0}(\sigma+\ve_1)$ being invertible, for which a sufficient condition is given by
\begin{align}\label{13:58:55}
|B|_0 < \frac{1}{|\sigma+\ve_1|_0}.
\end{align}
\end{rem}

\begin{rem}\label{20:13:41} 
Equation \eqref{13:21:37} can be obtained from \eqref{17:36:15} by taking $B=td\xi$ and differentiating at $t=0$. Specifically, this yields the equality 
\begin{align}\label{12:30:54}
d\xi^{0,2}+d\xi^{1,1}\ve_2-\ve_2(d\xi^{1,1}+d\xi^{2,0}\ve_2)=\delbar_{\ve_2}\big(\xi^{0,1}-\ve_2(\xi^{1,0})\big)-\lb \ve_3,(\sigma+\ve_1)(\xi^{1,0})\rb.
\end{align}  
\end{rem}

\begin{rem}\label{rem:24-08-22}
Notice that even though our problem is directly related to deformations its setup is different from the standard type of deformation problem. Indeed, for standard deformation problems one has an {\it integrability} condition which is {\it preserved by the action} of a group of symmetries. In our case, we assume from the start that we are dealing with integrable generalized complex structures and the group of symmetries (the $B$-field transforms) does not preserve ``being in holomorphic Poisson gauge''. This is precisely what allows us to use symmetries to obtain the main results. A consequence of this key difference is that there are no ready-made deformation theory results we can use to arrive at the desired conclusion. Instead we have to work with the underlying analytical results and adapt them to the present situation.
\end{rem}

\subsection{Stability of holomorphic Poisson structures on compact manifolds}\label{sec:17Jan:0027}

As we mentioned last section, a deformation of a holomorphic Poisson structure is itself manifestly holomorphic Poisson if the component $\ve_3 \in C^\infty(\wedge^2 T^{*0,1}M)$ of the deformation parameter vanishes. Our aim, throughout this paper, is to establish conditions under which this component can be made to vanish by the action of symmetries of $\T M$.

In this section we establish when this is the case for compact manifolds without boundary. The main result of this section depends on the behaviour of the map $H^2(M;\R)\rightarrow H^{0,2}(M,I)$ which assigns to any real two-form its $(0,2)$-part:

\begin{thm}\label{15:13:30}
Let $(M^{2n},I,\sigma)$ be a compact holomorphic Poisson manifold without boundary such that $H^2(M;\R)\rightarrow H^{0,2}(M,I)$ is surjective. Then every  generalized complex deformation of $(I,\sigma)$ sufficiently small in the Sobolev $L^2_{n+3}$-norm  admits a holomorphic gauge, that is, it is $B$-field equivalent to a holomorphic Poisson deformation of $(I,\sigma)$. If in addition $H^{0,2}(M)=0$, then the two-form $B$ may be taken to be exact.
\end{thm}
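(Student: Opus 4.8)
The goal is to produce a closed real two-form $B$ with $(e^B\cdot\ve)_3 = 0$, since by the discussion following \eqref{1:07:34} the vanishing of the $(0,2)$-component is exactly the condition that $e^B\cdot\ve$ be a holomorphic Poisson deformation of $(I,\sigma)$. The plan is to solve this nonlinear equation for $B$ by a Nash--Moser iteration of the type in \cite{MR656198}, with the linearized problem handled by Hodge theory together with the surjectivity hypothesis. From the explicit formula \eqref{17:36:15} in Lemma \ref{17:03:57}, the relevant component has the shape
\[
(e^B\cdot\ve)_3 = \ve_3 + B^{0,2} + Q(B,\ve),
\]
where $Q$ collects all terms that are at least bilinear in $(B,\ve_2)$, including those carrying the factor $(1+B^{2,0}(\sigma+\ve_1))^{-1}$. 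Hence the linearization at $B=0$ of the map $B\mapsto (e^B\cdot\ve)_3$ is simply $B\mapsto B^{0,2}$, and to leading order the task is to solve $B^{0,2}=-\ve_3$ over closed real two-forms.

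For the linearized solvability I first note, from \eqref{1:07:34}, that $\delbar\ve_3=-\lb\ve_2,\ve_3\rb$, so $\ve_3$ is $\delbar$-closed up to a quadratically small error. Using the Hodge decomposition on the compact complex manifold $(M,I)$ I write $-\ve_3 = h + \delbar\eta + \delbar^*\zeta$ with $h$ harmonic, $\eta\in\Omega^{0,1}(M)$, and $\delbar^*\zeta$ the co-exact remainder whose norm is controlled by $\|\delbar\ve_3\|=\|\lb\ve_2,\ve_3\rb\|$ and is therefore quadratic. I then realize the $\delbar$-closed part $h+\delbar\eta$ as $B^{0,2}$ for a closed real two-form: the harmonic piece $h$ represents a class in $H^{0,2}(M,I)$ which, by surjectivity of $H^2(M;\R)\to H^{0,2}(M,I)$, is the $(0,2)$-part of some closed real two-form $B_0$, say $B_0^{0,2}=h+\delbar\eta_0$; an exact two-form $d\alpha$ with real $\alpha$ contributes $(d\alpha)^{0,2}=\delbar\alpha^{0,1}$, so choosing $\alpha^{0,1}=\eta-\eta_0$ the closed real form $B=B_0+d\alpha$ satisfies $B^{0,2}=h+\delbar\eta$. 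With this $B$ one gets $(e^B\cdot\ve)_3 = -\delbar^*\zeta + Q(B,\ve)$, which is quadratically small. The Green operator supplies $\eta,\eta_0$ with tame estimates, and a fixed bounded right inverse on the finite-dimensional cohomology supplies $B_0$. When $H^{0,2}(M,I)=0$ the harmonic part $h$ vanishes, no appeal to surjectivity is needed, and $B=d\alpha$ may be taken exact; this is the source of the final assertion of the theorem.

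With the linearized equation solvable with tame estimates, I would run the Nash--Moser algorithm: starting from $\ve^{(0)}=\ve$, at stage $k$ solve the linearized problem for a closed two-form $B_k$ realizing the $\delbar$-closed part of $-S_k\ve_3^{(k)}$ (with $S_k$ a smoothing operator), set $\ve^{(k+1)}=e^{B_k}\cdot\ve^{(k)}$, and use the structure $(e^{B}\cdot\ve)_3=\ve_3+B^{0,2}+Q(B,\ve)$ to see that the new obstruction $\ve_3^{(k+1)}$ is quadratically smaller. The accumulated two-form $B$ (a composition of the $B_k$) is closed and achieves $(e^B\cdot\ve)_3=0$; if every $B_k$ is exact, as in the case $H^{0,2}(M,I)=0$, then so is $B$. \emph{The main obstacle is analytic rather than formal}: the nonlinear gauge action in \eqref{17:36:15} and the fact that $\ve_3$ is only $\delbar_{\ve_2}$-closed, with $\ve_2$-dependent coefficients that are not controlled in high Sobolev norms, produce a genuine loss of derivatives, so a naive Picard iteration diverges. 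Closing the argument in the fixed norm $L^2_{n+3}$ requires establishing the tame estimates for the Green operator and for the gauge action and combining them with the smoothing operators and the fast convergence intrinsic to the Nash--Moser scheme.
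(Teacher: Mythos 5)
Your linearized analysis coincides with the paper's: both proofs reduce to solving $B^{0,2}=-\ve_3$ modulo quadratic errors, both split $-\ve_3$ into a harmonic part (handled by the surjectivity of $H^2(M;\R)\to H^{0,2}(M,I)$, via a right inverse $s$ with $s(h)^{0,2}=h$) plus a $\delbar$-exact part realized by an exact real form $d(u+\bar u)$, and both note that the co-exact remainder $\delbar^*G\lb\ve_2,\ve_3\rb$ is quadratically small by the Maurer--Cartan equation. Where you diverge is in how the nonlinear problem is closed: the paper applies the implicit function theorem in a \emph{fixed} Sobolev space $L^2_{n+3}$ to the map $\Phi(\ve,B,u)=(H(\tilde\ve_3),\delbar^*G(\tilde\ve_3))$, whose partial derivative in $(B,u)$ at the origin is the identity, and then separately shows (i) that killing $H(\ve_3)$ and $\delbar^*G\ve_3$ forces $\ve_3=0$ via invertibility of $1+\delbar^*G\lb\ve_2,\cdot\rb$, and (ii) that the resulting $(B,u)$ is smooth by elliptic regularity for a quasilinear divergence-type equation. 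You instead propose a Nash--Moser iteration with smoothing operators.

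The justification you give for this heavier machinery is incorrect in the compact case: there is no loss of derivatives here. On a closed manifold the Green operator gains two derivatives, so $\delbar^*G$ gains one, the solution operator $(B,u)\mapsto(H(\tilde\ve_3),\delbar^*G(\tilde\ve_3))$ maps $L^2_{k-1}$ into $\mathcal H^{0,2}\oplus L^2_k$, and a plain Picard/Newton iteration at the fixed level $k=n+3$ converges --- that is exactly what the implicit function theorem delivers. The loss of derivatives you describe is the phenomenon the paper encounters only for manifolds \emph{with boundary}, where the Neumann operator recovers a single derivative; that is why Nash--Moser is reserved for Theorem \ref{14:11:52}. Your route would still produce \emph{a} proof if the tame estimates were written out, but at a cost: the smallness hypothesis would have to be imposed in a high (and unspecified) $C^a$ or Sobolev norm rather than in $L^2_{n+3}$, so the theorem as stated --- with its explicit, low regularity threshold --- would not be recovered. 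You also do not address the smoothness of the gauge for smooth data, which in the paper's scheme requires the separate quasilinear elliptic regularity argument (in a Nash--Moser scheme this would come for free from convergence in all $C^k$, but again only under high-norm smallness of $\ve$).
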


This theorem implies, for example, that if $H^{0,2}(M) = 0$, then every small deformation of a holomorphic Poisson structure is again (equivalent to) holomorphic Poisson, albeit for a (possibly) different complex structure on $M$. That is if $H^{0,2}(M) = 0$ `being holomorphic Poisson' is an open condition in generalized complex geometry.

\begin{proof} 
Fix a Hermitian metric on $(M,I)$, let $\Delta$ denote the corresponding $\delbar$-Laplacian and $\mathcal{H}^{p,q}$ the space of $\delbar$-harmonic $(p,q)$-forms. Let $G$ be the associated Green's operator and let $H$ be the projection onto harmonics, so that $\Id=\Delta G+H$. Let $s:\mathcal{H}^{0,2} \to \Omega^2_{cl}(M;\R)$ be a right inverse for the projection $p\colon \Omega^2_{cl}(M;\R)\rightarrow H^{0,2}(M,I)\cong \mathcal{H}^{0,2}$, where $\Omega^2_{cl}(M;\R)$ is the space of closed real $2$-forms, which we may arrange to satisfy\footnote{Note that any splitting $\tilde{s}$ satisfies $B=\tilde{s}(B)^{0,2}+\delbar \alpha$ for some $(0,1)$-form $\alpha$. We can then change $\tilde{s}$ (using a finite basis for $\mathcal{H}^{0,2}$) via $s(B):=\tilde{s}(B)+d(\alpha+\overline{\alpha})$.} $s(B)^{0,2}=B$. 

Denote by $L^2_k(\text{Im}(\delbar^\ast))$ the image of $\delbar^\ast\colon L^2_{k+1}(\Lambda^{0,2}T^\ast M)\rightarrow L^2_k(\Lambda^{0,1}T^\ast M)$, and consider the smooth maps:
\[
\beta \colon L^2_k(\Lambda^2\tL)\oplus\mathcal{H}^{0,2} \oplus L^2_k(\text{Im}(\delbar^\ast))  \to L^2_{k-1}(\wedge^{0,2}T^*M)
\]
\[\beta(\ve,B,u)  =(e^{s(B)+d(u+\bar{u})}\cdot \ve)_3
\]
and
\begin{align}\label{PHI}
\Phi: L^2_k(\Lambda^2\tL)\oplus \mathcal{H}^{0,2} \oplus L^2_k(\text{Im}(\delbar^\ast))  \rightarrow \mathcal{H}^{0,2} \oplus L^2_k(\text{Im}(\delbar^\ast)) 
\end{align}
\[
\Phi(\ve,B, u)=  (H(\beta(\ve,B,u)),\delbar^\ast G (\beta(\ve,B,u))).
 \]
Here $k:=n+3$ is chosen to ensure that the Sobolev spaces of degree $k$ are algebras and that their elements are once continuously differentiable (the latter will be relevant below).
Denote by $D^{(2,3)}_{(\ve,B,u)}\Phi$ the partial derivative of $\Phi$ with respect to its second and third variable at the point $(\ve,B,u)$, which is a linear map
\begin{align*}
D^{(2,3)}_{(\ve,B,u)}\Phi: \mathcal{H}^{0,2} \oplus L^2_k(\text{Im}(\delbar^\ast)) & \rightarrow \mathcal{H}^{0,2} \oplus L^2_k(\text{Im}(\delbar^\ast)).
 \end{align*}
We claim that $D^{(2,3)}_{(0,0,0)}\Phi$ is invertible. Indeed, due to \eqref{13:21:37} and \eqref{17:36:15},
\begin{align*}
D_{(0,0,0)}\beta(0,B,u)=
s(B)^{0,2}+\delbar u=B+\delbar u
\end{align*} 
by our choice of splitting $s$, hence
\begin{align*}
D^{(2,3)}_{(0,0,0)}\Phi(B,u)=(H(B+\delbar u),\delbar^\ast G (B+\delbar u)) = (B, u).
\end{align*}
Here we used that $\delbar^\ast G \delbar u = u$ because $u \in \text{Im}(\delbar^\ast)$. By the implicit function theorem we obtain, for every small deformation $\ve \in L^2_k(\wedge^2 \tL)$, a pair $(B,u) \in  \mathcal{H}^{0,2} \oplus L^2_k(\text{Im}(\delbar^\ast)) $ with the property that after applying the closed two-form $s(B)+ d(u+ \bar{u})$ to $\ve$ we have $(H(\ve_3),\delbar^\ast G(\ve_3))=(0,0)$. The pair $(B,u)$ depends smoothly on $\ve$, so by taking $\ve$ $L^2_k$-small we can ensure that $(B,u)$ is sufficiently $C^1$-small (because $k>n+2$).

To finish the proof we must prove two facts:
\begin{enumerate}
\item  If $(H(\ve_3),\delbar^\ast G(\ve_3))=(0,0)$ and $\ve$ is small enough and integrable, then $\ve_3=0$ and therefore the deformation is holomorphic Poisson.
\item If $\ve \in L^2_k(\Lambda^2\tL)$ is small enough and in fact smooth, then the pair $(B,u)$ obtained above is also smooth. 
\end{enumerate}

To prove 1.\ we observe that if $(H(\ve_3),\delbar^\ast G(\ve_3))=(0,0)$, then 
\begin{align*}
0=\delbar\delbar^\ast G\ve_3=\Delta G\ve_3-\delbar^\ast G \delbar\ve_3=\ve_3+\delbar^\ast G \lb \ve_2,\ve_3\rb.
\end{align*}
In the last step we used that $H(\ve_3)=0$ and that $\ve$ satisfies the Maurer-Cartan equation. The operator 
\begin{align*}
1+\delbar^\ast G \lb \ve_2,\cdot\rb : L^2_k(\Lambda^{0,2}T^\ast M)\rightarrow L^2_k(\Lambda^{0,2}T^\ast M)
\end{align*}
is invertible for small $\ve_2$, which proves 1.

To prove 2.\ we observe that by construction $s(B)$ is smooth, so the only question here is whether the (0,1)-form $u$ chosen above is smooth. To prove that this is the case, we turn to \eqref{17:36:15}. If we let $F_1$ be the nonlinear smooth bundle map
\[
F_1\colon \wedge^2(T^{1,0}M\oplus T^{*0,1}M)\times \wedge^2T^*M \to \wedge^{0,2} T^*M,
\]
\[
F_1(\ve, B)= \ve_3+B^{0,2}+B^{1,1}\ve_2-(\ve_2+B^{1,1}(\sigma+\ve_1))(1+B^{2,0}(\sigma+\ve_1))^{-1}(B^{1,1}+B^{2,0}\ve_2),
\]
then it follows from 1.\ that $F_1(\ve, s(B) + d(u + \bar{u})) =0$. Therefore, not only $u \in \operatorname{Im}(\delbar^*)$ but it is also a solution to a nonlinear differential equation of first order.  Using these two facts we conclude that $u$ is also a solution to the second order differential equation
\begin{equation}\label{eq:quasilinear}
F(\ve,B,u) := \delbar^* F_1(\ve, s(B) + d(u + \bar{u})) + \delbar \delbar ^* u =0,
\end{equation}
since both summands vanish independently. The first summand is a second order quasilinear differential operator of divergence type in the sense of \cite{MR0244627}, since it is a composition of a nonlinear first order differential operator with $\delbar^*$ (a first order {\it linear} differential operator).  The second summand is clearly a linear differential operator. Therefore $F(\ve,B,\cdot)$ is also a second order quasilinear differential operator of divergence type.  Since $F_1$, $\ve$ and $s(B)$ are smooth, according to \cite[Theorem 6.3]{MR0244627}, a solution $u$ of this equation is smooth as long as the linearization of $F(\ve,B,\cdot)$ at $u$  is an elliptic (linear) differential operator. Therefore all that is left to finish the proof is to argue that this is the case. Since $F$ is a quasi-linear operator the principal symbol of its linearization at $u$ only depends on the first jet of $u$. Further, as mentioned before, as long as  $\ve$ is chosen small enough in $L^2_k(\wedge^{2,0}T^*M)$, we obtain that $B$  and $u$ are $C^1$-small, hence, by continuity of the principal symbol, to show that the linearization of $F(\ve,B,\cdot)$ is elliptic at $u$ it suffices to show it is elliptic for vanishing $\ve$, $B$, $u$. For $\ve = B =0$, we have
\[
F(0,0,u) = \delbar^*(\delbar u-((\del u + \delbar \bar u)\sigma)(1+\del \bar u \sigma)^{-1}(\del u + \delbar \bar u)) + \delbar \delbar^* u,
\]
whose linearization at $u\equiv 0$ is $\frac{\del F}{\del u}|_{(0,0,0)} (v) = \triangle v$ which is elliptic, hence the result follows.
\end{proof}

One of our main results in this paper is that under a few additional hypothesis the theorem above also holds for manifolds with boundary. At this stage it is good to take a critical look at the proof of Theorem \ref{15:13:30} to prepare ourselves for what is to come. The broad idea of our approach remains to use small symmetries of $\T M$ to kill the component $\ve_3$ of a given small deformation. Above,  we used two key analytical results to achieve that: Hodge theory for the $\delbar$-operator and the implicit function theorem for Banach spaces.

Hodge theory for complex manifolds with boundary was developed by Folland and Kohn in \cite{MR0461588}. While it is similar to usual Hodge theory, there is a subtle, but fundamental difference: the inverse of the Laplacian, now called $N$, the {\it Neumann operator}, only recovers one weak derivative. If we follow the approach above with this less sharp Neumann operator, this apparently technical difference comes back at the map $\Phi$ defined in \eqref{PHI}, which in this new context has a different codomain:
\[
\Phi: L^2_k(\Lambda^2\tL)\oplus \mathcal{H}^{0,2} \oplus L^2_{k}(\text{Im}(\delbar^\ast))  \rightarrow \mathcal{H}^{0,2} \oplus L^2_{k-1}(\text{Im}(\delbar^\ast)). 
\]
 The same computation carried above shows that the derivative of $\Phi$ along the second and third coordinates at the identity corresponds to the natural inclusion 
\[
\mathcal{H}^{0,2} \oplus L^2_{k}(\text{Im}(\delbar^\ast))  \to \mathcal{H}^{0,2} \oplus L^2_{k-1}(\text{Im}(\delbar^\ast))
\]
 which does not allow us to apply the implicit function theorem.
%
 
This small and apparently technical issue irreparably breaks down the argument used in Theorem \ref{15:13:30} and we must instead find another approach that can deal with this ``loss of derivatives''. This is typically the context of Nash--Moser types of arguments \cite{MR3128977,MR656198,MR2855089,MR3261013} and this is precisely  the path that will allow us to prove the final result. 

To use a Nash--Moser type of algorithm it is not enough to have Hodge theory developed for one fixed structure. We need uniform bounds on the Neumann operator for all nearby structures, integrable or not. Extending the work of Folland and Kohn  to keep track of how the Neumann operators of nearby structures are related is an undertaking of its own, and was carried out by van der Leer Dur\'an separately in \cite{s00208-021-02293-5}. We recall the main result of \cite{s00208-021-02293-5} in Section~\ref{sec:17Jan:0122}. With the Hodge theory for families of almost complex structures at hand, we can, in this paper, produce the appropriate Nash--Moser type of algorithm that will substitute the implicit function theorem.

Therefore the next few steps will be to set up Hodge theory for families of almost complex structures, which will allow us to give a precise statement of our main theorem. After that we will set up the Nash--Moser algorithm.
 
 \section{Hodge Theory on complex manifolds with boundary}\label{sec:hodge}
 Hodge theory for (almost) complex structures on manifolds with boundary depends on how convex the boundary is in a very precise way. In this section we will introduce $q$-covexity, the geometric notion necessary to develop Hodge theory for manifolds with boundary, and eventually state the result on elliptic regularity for the $\delbar$-Laplacian on complex manifolds with boundary (c.f. Theorem \ref{15:35:57}).
 
\subsection{Cauchy-Riemann structures and $q$-convexity}

Let $M$ be a manifold with boundary of real dimension $2n$ and let $I$ be an almost complex structure on $M$. On the boundary $\del M$ we can define a complex vector bundle 
 \begin{align*}
\mathcal{C}\mathcal{R}:=T^{0,1}M\cap T\del M_\C
 \end{align*}
called the \textsl{Cauchy-Riemann structure} of $(M,I)$, of complex rank $n-1$. 
Consider the complex line bundle $\mu$ on $\del M$ given by 
\begin{align*}
\mu:=\frac{T\del M_\C}{\mathcal{C}\mathcal{R}\oplus \overline{\mathcal{C}\mathcal{R}}}.
 \end{align*}
It is the complexification of a real line bundle that we can equip with a natural orientation as follows. For $x\in \del M$, let $\nu\in T_xM$ denote an outward pointing vector with the property that $I\nu\in T_x\del M$. Then $I\nu$ defines a real generator for $\mu_x$ which, up to a positive rescaling, is independent of the choice of $\nu$. 
\begin{defn}
The \textsl{Levi-form} of $(M,I)$ at $x$ is the Hermitian bilinear form defined on $\mathcal{C}\mathcal{R}_x$ with values in $\mu_{x}$ given by
\begin{align}\label{12:20:40}
\mathcal{L}_{x}(u,v):=-i[u,\overline{v}].
\end{align}  
\end{defn}
\begin{rem}
We are abusing notation here by implicitly choosing local extensions of $u$ and $v$ to sections of $\mathcal{C}\mathcal{R}$, in order to compute the bracket $[u,\overline{v}]$. By definition of $\mu$ as a quotient, this bracket is independent of the choice of extensions. 
\end{rem}
Using the identifications $\mu_x\cong \C$ described earlier, we can identify $\mcL$ with a conformal class of Hermitian bilinear forms on $\mathcal{C}\mathcal{R}$, so in particular we can consider the number of positive and negative eigenvalues of $\mcL$ at every boundary point.  

 \begin{defn}\label{11:30:04}
Let $(M,I)$ be an almost complex manifold of real dimension $2n$ and let $0\leq q\leq n$ be an integer. We call $(M,I)$ \textsl{$q$-convex}, if for every $x\in \del M$ the Levi-form $\mcL_x$ has either at least $n-q$ positive or at least $q+1$ negative eigenvalues at $x$. 
 \end{defn}
\begin{rem}
Note that $(M,I)$ is always $n$-convex. The terminology $q$-convex is not standard, and perhaps slightly misleading as negative eigenvalues correspond to concave boundaries. In \cite{MR0461588} (in the context of complex structures) $q$-convexity is referred to as \lq\lq condition $Z(q)$\rq\rq.
\end{rem}

\begin{ex} Let $(M',I)$ be an integrable complex manifold without boundary and let $M\subset M'$ be a submanifold with boundary. Around any point $x\in \del M$ we can find holomorphic coordinates $(z_1,\ldots,z_n)$ and a real smooth function $\varphi$ satisfying $M=\{y\in M'| \ \varphi(y)<0\}$ and $d_x\varphi \neq 0$. Then 
$$u=\sum_{i=1}^n u^i \del_{\overline{z}^i}\in T^{0,1}M$$ 
lies in $ \mcC\mathcal{R}$ if and only if $\sum_i u^i  \del_{\bar{z}_i} \varphi=0$, and on such vectors the Levi form is represented by the Hermitian bilinear form
\begin{align*}
\mcL(u,v)=\sum_{i,j=1}^n \frac{\del^2\varphi}{\del z^i\del \overline{z}^j} \overline{v^i} u^j.
\end{align*} 
If the above Hessian of $\varphi$ is positive definite on $T^{0,1}M_\C$ then its restriction to $\mcC\mathcal{R}$ is too and hence $(M,I)$ is $q$-convex for all $q\geq 1$. For instance, $\varphi:=|z|^2-1$ on the unit ball in $\C^n$ has this property, which is therefore $q$-convex for all $1\leq q\leq n$. If we then remove a smaller ball from its interior, the boundary has two components, on which $\mcL$ is positive and negative definite. This annular region is therefore $q$-convex for all $1\leq q\leq n-2$. 
\end{ex}   
\begin{ex}\label{10:11:15} 
Let $Y$ be an $(n-1)$-dimensional compact complex manifold without boundary and $p:L\rightarrow Y$ a holomorphic line bundle. Let $h$ be a Hermitian metric on $L$ and let $M\subset L$ be the associated unit disc-bundle. 
On $\del M$ we have $\mcC\mathcal{R}\cong p^\ast T^{0,1}Y$, and if $R_h$ denotes the curvature associated to $h$ then it turns out that 
 \begin{align*}
\hspace{15mm} \mcL(u,v)=R_h(u,\overline{v})\hspace{15mm} \forall u,v\in T^{0,1}Y.
\end{align*}
Consequently, $M$ is $q$-convex if and only if $-iR_h$ has, at each point $y\in Y$, either at least $n-q$ positive eigenvalues or at least $q+1$ negative eigenvalues\footnote{By definition, the eigenvalues of a real $(1,1)$-form $\tau$ are the eigenvalues of the Hermitian matrix $\tau_{ij}$ with respect to a decomposition $\tau=i\sum_{i,j}\tau_{ij}dz^i\wedge d\bar{z}^j$ in local coordinates.}. 
\end{ex}
Using the fact that $c_1(L)$ is represented by $\tfrac{i}{2\pi}R_h$, one can sometimes translate the previous example into a statement about $c_1(L)$.
\begin{lem}\cite{MR0137127}\label{11:16:39}
Let $Y$ satisfy the $\del\delbar$-lemma\footnote{Specifically, we need that every real $(1,1)$-form which is $d$-exact is also $\del\delbar$-exact.}. Then there exists a $q$-convex disc neighbourhood 
of $Y$ inside a holomorphic line bundle $L$  if and only if $c_1(L)$ has a real representative $\tau\in \Omega^{1,1}(Y)$ which at each point has either at least $n-q$ negative eigenvalues or at least $q+1$ positive eigenvalues.
\end{lem}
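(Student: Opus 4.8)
The plan is to combine the curvature description of $q$-convexity from Example \ref{10:11:15} with the $\del\delbar$-lemma to pass freely between closed real $(1,1)$-forms representing $c_1(L)$ and Hermitian metrics on $L$. First I would recall that a Hermitian metric $h$ on $L$ has curvature $R_h$, a closed real $(1,1)$-form, and that $\tfrac{i}{2\pi}R_h$ represents $c_1(L)$; by Example \ref{10:11:15}, the associated disc bundle is $q$-convex precisely when $-iR_h$ has, at each point, at least $n-q$ positive or at least $q+1$ negative eigenvalues. Note the sign flip relative to the statement of the lemma: the lemma is phrased in terms of $\tau$ representing $c_1(L)$ with \emph{negative}/\emph{positive} eigenvalue counts, whereas Example \ref{10:11:15} uses $-iR_h$; keeping this bookkeeping straight will be essential, and I would fix the convention $\tau = \tfrac{i}{2\pi}R_h$ at the outset so the translation is unambiguous.

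For the forward direction, suppose a $q$-convex disc neighbourhood of $Y$ exists inside $L$. A disc neighbourhood is cut out by some Hermitian-type fibre norm, and after shrinking I would extract from it a genuine Hermitian metric $h$ on $L$ whose unit disc bundle is contained in the given neighbourhood and is still $q$-convex (the Levi form condition is open and governed only by the curvature at the zero section). Then $\tau := \tfrac{i}{2\pi}R_h$ is a real $(1,1)$-representative of $c_1(L)$ whose eigenvalue profile is exactly the one demanded, after accounting for the sign convention above.

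For the converse, I would start from a real $(1,1)$-form $\tau$ representing $c_1(L)$ with the stated eigenvalue condition and produce a metric whose curvature is $\tau$ (up to the normalizing constant). Choosing any background metric $h_0$ with curvature $R_0$, both $\tfrac{i}{2\pi}R_0$ and $\tau$ represent $c_1(L)$, so their difference is a real $(1,1)$-form that is $d$-exact. Here is where the $\del\delbar$-lemma enters: by hypothesis a $d$-exact real $(1,1)$-form is $\del\delbar$-exact, so $\tau - \tfrac{i}{2\pi}R_0 = \tfrac{i}{2\pi}\del\delbar f$ for a real function $f$, and rescaling the metric by $e^{-f}$ changes the curvature by exactly $\del\delbar f$, yielding a metric $h$ with $\tfrac{i}{2\pi}R_h = \tau$. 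The disc bundle of this $h$ is then $q$-convex by Example \ref{10:11:15}. The main obstacle is the analytic extraction in the forward direction — passing from an arbitrary $q$-convex disc \emph{neighbourhood} (whose defining function need not come from a bundle metric) to an honest Hermitian metric with the right curvature sign at the zero section — together with verifying that the eigenvalue counts in Example \ref{10:11:15}, the global $\del\delbar$-lemma, and the sign conventions all line up consistently; the rest is the standard Kähler-geometry dictionary between metrics, curvatures, and $\del\delbar$-potentials.
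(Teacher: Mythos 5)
Your proposal is correct and follows essentially the same route as the paper's own proof: identify the disc neighbourhood with the unit disc bundle of a Hermitian metric, read off the eigenvalue condition on $\tfrac{i}{2\pi}R_h$ via Example \ref{10:11:15} for the forward direction, and use the $\del\delbar$-lemma to conformally rescale a background metric so its curvature equals the given representative for the converse. The extraction step you flag as the main obstacle is a non-issue here, since the paper's convention is that a disc neighbourhood is by definition ($S^1$-invariant, hence) the unit disc bundle of some Hermitian metric.
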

\begin{ex}\label{11:18:37}
If $Y$ is compact and $L$ is negative, meaning that $c_1(L)$ admits a representative whose eigenvalues are all negative, then $Y$ is K\"ahler and the $\del\delbar$-lemma holds. Consequently, there exists a disc-neighbourhood which is $q$-convex for all $q\geq 1$. The line bundles $\mcO_{\mathbb{P}^m}(-n)$ over $\mathbb{P}^m$ for $n>0$ satisfy this for example.
\end{ex}

\subsection{Relation between $q$-convexity and Hodge decompositions}\label{sec:17Jan:0122}

Let $M$ be a compact manifold with boundary and let $I$ be an integrable complex structure on $M$. The associated $\delbar$ operator defines a linear differential operator
$$\delbar: C^\infty(\Lambda^\bullet T^{\ast 0,1}M) \rightarrow C^\infty(\Lambda^{\bullet+1} T^{\ast 0,1}M),$$
which is a derivation in the sense that $\delbar(\alpha\wedge\beta)=\delbar\alpha\wedge\beta+(-1)^{\operatorname{deg}(\alpha)}\alpha\wedge\delbar\beta$, and satisfies $\delbar^2=0$. If $\ve_2\in \Omega^{0,1}(T^{1,0}M)$ describes an almost complex deformation of $I$, we can consider the differential operator  
$$\delbar_{\ve_2}=\delbar+\lb \ve_2,\cdot\rb: C^\infty(\Lambda^\bullet T^{\ast 0,1}M) \rightarrow C^\infty(\Lambda^{\bullet+1} T^{\ast 0,1}M).$$
It satisfies the same properties as $\delbar$, except that $\delbar_{\ve_2}^2$ vanishes if and only if $\ve_2$ defines an integrable deformation of $I$. 
\begin{rem}
For a more concrete expression of $\delbar_{\ve_2}$ we can write $\ve_2=\sum_j \alpha_j\otimes X_j$ where $\alpha_j\in T^{\ast 0,1}M$ and $X_j\in T^{1,0}M$. Then for any $\beta\in \Omega^{0,\bullet}(M)$ we have
\begin{align*}
\delbar_{\ve_2} \beta=\delbar \beta+\sum_j \alpha_j\wedge \iota_{X_j}d\beta. 
\end{align*}  
\end{rem}
Our aim here is to describe a Hodge decomposition for $\delbar_{\ve_2}$. Fix an auxiliary Hermitian metric on $(M,I)$ and let $\delbar_{\ve_2}^\ast$ be the corresponding formal adjoint of $\delbar_{\ve_2}$. Denote by
$$\Delta_{\ve_2}:=\delbar_{\ve_2}\delbar_{\ve_2}^\ast+\delbar_{\ve_2}^\ast\delbar_{\ve_2}$$
the associated Laplacian, which we consider as an unbounded operator on the Hilbert space $L^2(\Lambda^q T^{\ast 0,1}M)$ of square-integrable $(0,q)$-forms (here $q$ is an integer between $0$ and $n=\text{dim}_\C(M)$). The following proposition is due to Friedrichs \cite{MR1512905} (see also \cite{MR0461588} or \cite{s00208-021-02293-5} for more details). Below we will abbreviate $\Omega^{0,q}(M)=C^\infty(\Lambda^q T^{\ast 0,1}M)$ and denote by $r\in C^\infty(M)$ a function which is negative on the interior of $M$, zero on the boundary $\del M$ and satisfies $|dr|_{\del M}=1$ (with respect to the given Hermitian metric). We denote by $\sigma(\delbar_{\ve_2}^\ast,dr)$ the symbol of $\delbar_{\ve_2}^\ast$ evaluated on the one-form $dr$.
\begin{prop}[\cite{MR1512905}]\label{09:59:56}
There exists a self-adjoint unbounded operator\/ $\square_{\ve_2}$ on $L^2(\Lambda^q T^{\ast 0,1}M)$ whose domain satisfies
\begin{align}\label{08:58:09}
\text{Dom}(\square_{\ve_2})\cap \Omega^{0,q}(M)=\{\varphi\in \Omega^{0,q}(M)|\ \sigma(\delbar_{\ve_2}^\ast,dr)\varphi|_{\del M}=0,\ \sigma(\delbar_{\ve_2}^\ast,dr)\delbar_{\ve_2}\varphi|_{\del M}=0		\},
\end{align}
on which $\square_{\ve_2}$ agrees with with $\Delta_{\ve_2}$.
\end{prop}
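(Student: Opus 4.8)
The plan is to realize $\square_{\ve_2}$ as the self-adjoint operator attached to an energy form through the first representation theorem for quadratic forms (the Friedrichs method), which has the virtue of producing self-adjointness automatically and of being completely insensitive to whether $\delbar_{\ve_2}$ squares to zero. First I would regard $\delbar_{\ve_2}\colon L^2(\Lambda^q T^{\ast 0,1}M)\to L^2(\Lambda^{q+1}T^{\ast 0,1}M)$ as a densely defined operator on its maximal domain $\{\varphi:\delbar_{\ve_2}\varphi\in L^2\}$ (distributional action), which is a closed operator; let $\delbar_{\ve_2}^\star$ denote its \emph{Hilbert-space} adjoint, a closed operator to be carefully distinguished from the formal adjoint $\delbar_{\ve_2}^\ast$. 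On the form domain $\mcD(Q):=\mathrm{Dom}(\delbar_{\ve_2})\cap\mathrm{Dom}(\delbar_{\ve_2}^\star)$ I then define the non-negative symmetric sesquilinear form
\[
Q(\varphi,\psi):=\langle \delbar_{\ve_2}\varphi,\delbar_{\ve_2}\psi\rangle+\langle \delbar_{\ve_2}^\star\varphi,\delbar_{\ve_2}^\star\psi\rangle.
\]
This form is densely defined, and since $\delbar_{\ve_2}$ and $\delbar_{\ve_2}^\star$ are both closed, it is closed with respect to the graph norm $\|\varphi\|^2+Q(\varphi,\varphi)$.

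Applying the first representation theorem to the closed, densely defined, non-negative form $Q$ yields a unique self-adjoint operator $\square_{\ve_2}$ with $\mathrm{Dom}(\square_{\ve_2})\subset\mcD(Q)$, characterized by the property that $\varphi\in\mathrm{Dom}(\square_{\ve_2})$ with $\square_{\ve_2}\varphi=f$ exactly when $Q(\varphi,\psi)=\langle f,\psi\rangle$ for all $\psi\in\mcD(Q)$. Abstractly this identifies $\mathrm{Dom}(\square_{\ve_2})$ with those $\varphi\in\mathrm{Dom}(\delbar_{\ve_2})\cap\mathrm{Dom}(\delbar_{\ve_2}^\star)$ for which in addition $\delbar_{\ve_2}\varphi\in\mathrm{Dom}(\delbar_{\ve_2}^\star)$ and $\delbar_{\ve_2}^\star\varphi\in\mathrm{Dom}(\delbar_{\ve_2})$, on which $\square_{\ve_2}$ acts as $\delbar_{\ve_2}^\star\delbar_{\ve_2}+\delbar_{\ve_2}\delbar_{\ve_2}^\star$.

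It then remains to translate these abstract membership conditions into the two pointwise boundary conditions, restricted to smooth forms. The crucial input is the Green's-formula characterization of $\mathrm{Dom}(\delbar_{\ve_2}^\star)$ on smooth forms: integration by parts produces a boundary integral whose integrand pairs $\sigma(\delbar_{\ve_2}^\ast,dr)\varphi|_{\del M}$ against the boundary value of the test form, so a smooth $\varphi$ lies in $\mathrm{Dom}(\delbar_{\ve_2}^\star)$ — with $\delbar_{\ve_2}^\star\varphi=\delbar_{\ve_2}^\ast\varphi$ — precisely when $\sigma(\delbar_{\ve_2}^\ast,dr)\varphi|_{\del M}=0$. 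Applying this to $\varphi$ yields the first condition in \eqref{08:58:09}, and applying it to the smooth form $\delbar_{\ve_2}\varphi$ yields the second; the remaining requirements $\varphi,\delbar_{\ve_2}^\star\varphi\in\mathrm{Dom}(\delbar_{\ve_2})$ impose no boundary constraint because $\delbar_{\ve_2}$ carries the maximal domain. Together these recover the domain description and show $\square_{\ve_2}=\Delta_{\ve_2}$ on the smooth elements of its domain.

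The main obstacle is the boundary bookkeeping in this last step: one must make the integration-by-parts identity precise and verify that $\sigma(\delbar_{\ve_2}^\ast,dr)$ is exactly the factor governing the boundary term, and one must respect the fact that $\mathrm{Dom}(\delbar_{\ve_2}^\star)$ a priori contains non-smooth forms, so that the pointwise conditions only characterize the \emph{smooth} part $\mathrm{Dom}(\square_{\ve_2})\cap\Omega^{0,q}(M)$, as the statement asserts. I would stress that nothing here uses integrability: the representation theorem requires only that $Q$ be closed, densely defined, and semibounded, so the construction applies verbatim to the possibly non-integrable $\delbar_{\ve_2}$, with $\delbar_{\ve_2}^2\neq 0$ playing no role whatsoever. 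The genuinely hard analytic content — that under the $q$-convexity hypothesis $\mathrm{Dom}(\square_{\ve_2})$ in fact enjoys extra Sobolev regularity — is not part of this existence statement and is deferred to the elliptic estimate of Theorem \ref{15:35:57}.
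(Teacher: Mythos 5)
Your construction is correct, and it is essentially the argument the paper is pointing to: the paper does not prove this proposition itself but cites it to Friedrichs and to Folland--Kohn, and what those references do is exactly your Friedrichs-extension argument --- realize $\square_{\ve_2}$ as the self-adjoint operator of the closed nonnegative form $Q(\varphi,\psi)=\langle\delbar_{\ve_2}\varphi,\delbar_{\ve_2}\psi\rangle+\langle\delbar_{\ve_2}^\star\varphi,\delbar_{\ve_2}^\star\psi\rangle$ on $\mathrm{Dom}(\delbar_{\ve_2})\cap\mathrm{Dom}(\delbar_{\ve_2}^\star)$, and then read off the two $\delbar$-Neumann boundary conditions on smooth forms via Green's formula. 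You also correctly identify the two points that need care (the passage from the form characterization of $\mathrm{Dom}(\square_{\ve_2})$ to the operator identities $\delbar_{\ve_2}\varphi\in\mathrm{Dom}(\delbar_{\ve_2}^\star)$, $\delbar_{\ve_2}^\star\varphi\in\mathrm{Dom}(\delbar_{\ve_2})$, and the fact that the pointwise conditions only describe the smooth part of the domain) and the key feature relevant to this paper, namely that nothing in the construction uses $\delbar_{\ve_2}^2=0$.
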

The following theorem provides Hodge decompositions for the operator $\square_{\ve_2}$. Throughout this paper we'll denote by $||\cdot||_k$ the $L^2_k$-norm and by $|\cdot|_k$ the $C^k$-norm, abbreviating $||\cdot||:=||\cdot||_0$ and $|\cdot|:=|\cdot|_0$.  
\begin{thm}[\cite{s00208-021-02293-5}]\label{15:35:57}
Let $(M,I)$ be a compact complex manifold with boundary which is $q$-convex (see Def.\ref{11:30:04}). Then there exists a neighbourhood $B\subset \Omega^{0,1}(T^{1,0}M)$ of zero and an integer $b\in \mathbb{Z}_{\geq 0}$ such that the following hold for every $\ve_2\in B$:
\begin{itemize}
\item[1) ] 
If $\varphi\in \text{Dom}(\square_{\ve_2})$ has the property that $\square_{\ve_2}\varphi$ is smooth, then $\varphi$ is smooth as well and  \begin{align}\label{21:18:00}
||\varphi||_{k+1}\leq \mcL(|\ve_2|_{k+b};||\square_{\ve_2}\varphi||_k)+\mcL(|\ve_2|_{k+b};||\varphi||) 
\end{align}
for all $\varphi\in \text{Dom}(\square_{\ve_2})\cap \Omega^{0,q}(M)$ and all $k\in \mathbb{Z}_{\geq 0}$.
\item[2)] The image of $\square_{\ve_2}$ is closed and we have an orthogonal decomposition 
\begin{align}\label{16:38:18}
L^2(\Lambda^q T^{\ast 0,1}M)=\text{Im}(\square_{\ve_2})\oplus \mcH_{\ve_2}, 
\end{align}  
with $\mcH_{\ve_2}:=\text{Ker}(\square_{\ve_2})\subset \Omega^{0,q}(M)$ finite dimensional. 
\end{itemize}
The decomposition (\ref{16:38:18}) gives rise to the Neumann operator $N_{\ve_2}$ on $L^2(\Lambda^qT^{\ast 0,1}M)$, which by definition is zero on $\mcH_{\ve_2}$ and satisfies $N_{\ve_2} \square_{\ve_2}\varphi=(1-\pi_{\ve_2})\varphi$ for $\varphi\in \text{Dom}(\square_{\ve_2})$, where $\pi_{\ve_2}$ denotes the projection to $\mcH_{\ve_2}$. 
\begin{itemize}
\item[3)] For every fixed $\ve_2$ the operator $N_{\ve_2}$ is bounded, self-adjoint and induces bounded operators $N_{\ve_2}:L^2_k(\Lambda^qT^{\ast0,1}M)\rightarrow L^2_{k+1}(\Lambda^qT^{\ast0,1}M)$. For every $\varphi\in L^2(\Lambda^qT^{\ast0,1}M)$ we have $\varphi=\square_{\ve_2} N_{\ve_2}\varphi+\pi_{\ve_2}\varphi$, and if $\varphi\in \text{Dom}(\square_{\ve_2})$ then also $\varphi= N_{\ve_2}\square_{\ve_2}\varphi+\pi_{\ve_2}\varphi$.
\item[4)] If $\mcH_0=0$ then, after possibly shrinking $B$, we have $\mcH_{\ve_2}=0$ for all ${\ve_2}\in B$ and 
\begin{align}\label{15:53:51}
|N_{\ve_2}\varphi |_k\leq \mcL(|\ve_2|_{k+b};| \varphi |_{k+b}),
\end{align}
holds for all $\varphi \in \Omega^{0,q}(M)$ and all $k\in \mathbb{Z}_{\geq 0}$. 
\end{itemize}
\end{thm}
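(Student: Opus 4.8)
Since this theorem is recalled from \cite{HD}, a full proof lies outside this paper; here is the route I would follow. The plan is to realise $\square_{\ve_2}$ as the Friedrichs extension of the $\delbar_{\ve_2}$-Neumann quadratic form and to derive every assertion from a single subelliptic \emph{a priori} estimate whose constants are controlled tamely in $\ve_2$. This is the strategy of Folland--Kohn \cite{MR0461588}, the genuinely new ingredient being the uniform bounds over the family. First I would introduce the nonnegative symmetric form
\[
Q_{\ve_2}(\varphi,\psi)=(\delbar_{\ve_2}\varphi,\delbar_{\ve_2}\psi)+(\delbar_{\ve_2}^\ast\varphi,\delbar_{\ve_2}^\ast\psi)
\]
on the domain of $(0,q)$-forms $\varphi$ with $\delbar_{\ve_2}\varphi,\delbar_{\ve_2}^\ast\varphi\in L^2$ subject to the single boundary condition $\sigma(\delbar_{\ve_2}^\ast,dr)\varphi|_{\del M}=0$. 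Verifying that $Q_{\ve_2}$ is closed and invoking the Friedrichs representation theorem produces the self-adjoint operator $\square_{\ve_2}$, whose domain a routine integration by parts identifies with the two conditions in \eqref{08:58:09}, giving Proposition \ref{09:59:56}.

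The analytic heart is the Morrey--Kohn--H\"ormander estimate. Integrating by parts in $Q_{\ve_2}(\varphi,\varphi)$ produces a boundary integral weighted by the Levi form, and the hypothesis that $(M,I)$ is $q$-convex (condition $Z(q)$), together with a H\"ormander weight $e^{-t|z|^2}$, makes this term control the tangential norm and yields a subelliptic estimate of order $\tfrac12$ in degree $q$. Because $\delbar_{\ve_2}=\delbar+\lb\ve_2,\cdot\rb$ differs from $\delbar$ only by an algebraic zeroth-order term built from $\ve_2$, the constants can be taken uniform for $\ve_2$ in a fixed neighbourhood $B$ of $0$. Rellich's theorem applied to the compact embedding furnished by the $\tfrac12$-gain then forces $\mcH_{\ve_2}=\ker\square_{\ve_2}$ to be finite-dimensional and $\mathrm{Im}(\square_{\ve_2})$ to be closed, which is item 2); the Neumann operator $N_{\ve_2}$ is the bounded inverse of $\square_{\ve_2}$ on $\mcH_{\ve_2}^\perp$ extended by zero, self-adjoint because $\square_{\ve_2}$ is, and the order-$\tfrac12$ subelliptic estimate translates into the gain of one full Sobolev derivative, $N_{\ve_2}\colon L^2_k\to L^2_{k+1}$, of item 3).

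For the regularity and tame estimates of item 1) I would run the tangential-derivative (elliptic regularisation) argument of Kohn--Nirenberg: differentiate $\square_{\ve_2}\varphi=\psi$ along vector fields tangent to $\del M$, commute them past $\square_{\ve_2}$, absorb the top-order commutators using the subelliptic estimate, and recover the single missing normal derivative directly from the equation (the boundary being non-characteristic for the relevant component). Iterating in $k$ gives both smoothness and the Sobolev bounds \eqref{21:18:00}. The decisive point is the bookkeeping: each commutator $[\square_{\ve_2},\cdot]$ and each boundary term depends on $\ve_2$ and its derivatives, and tracking this dependence so that it is linear in the top Sobolev norm and polynomial in $|\ve_2|_{k+b}$ with one fixed loss $b$ is exactly what produces the tame estimates written in the notation $\mcL(\,\cdot\,;\,\cdot\,)$. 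Item 4) is then a perturbation statement: if $\mcH_0=0$ then $\square_0$ is invertible with bounded inverse, and uniformity of the basic estimate over $B$ keeps the bottom of the spectrum away from $0$, so $\mcH_{\ve_2}=0$ after shrinking $B$; substituting $\varphi=N_{\ve_2}\psi$ into \eqref{21:18:00} and using this uniform spectral gap to dominate the $\|\varphi\|$ term by $|\psi|$ delivers \eqref{15:53:51}.

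The main obstacle is exactly this uniformity in the family parameter. Since $\delbar_{\ve_2}^\ast$ and hence its symbol vary with $\ve_2$, the boundary condition defining $\mathrm{Dom}(\square_{\ve_2})$ --- and therefore the very Hilbert space on which the problem is posed --- moves with $\ve_2$, so one cannot merely perturb a fixed unbounded operator. Controlling this moving domain while keeping every estimate tame with a single loss $b$, rather than an $\ve_2$-dependent number of derivatives, is the technical core of the result and is carried out in full in \cite{HD}.
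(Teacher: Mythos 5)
The paper offers no proof of this theorem: it is imported verbatim from \cite{HD}, and the surrounding text only explains \emph{why} such a family version of Folland--Kohn is needed (uniform bounds on the Neumann operators of nearby, possibly non-integrable, almost complex structures, to feed the Nash--Moser iteration). So there is nothing in this paper to compare your argument against line by line. That said, your sketch follows exactly the route the authors describe as the content of \cite{HD}: Friedrichs extension of the $\delbar_{\ve_2}$-Neumann form, the Morrey--Kohn--H\"ormander basic estimate under condition $Z(q)$, subellipticity of order $\tfrac12$ giving compactness, finite-dimensional harmonic space and closed range, elliptic-regularization/tangential-derivative bootstrapping for the tame estimates, and a spectral-gap perturbation for item 4). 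You also correctly isolate the genuine difficulty --- that the boundary condition, hence the domain of $\square_{\ve_2}$, moves with $\ve_2$, so uniformity with a single fixed loss $b$ is not a formal perturbation argument --- which is precisely the point the authors emphasize as the reason \cite{HD} is a separate undertaking. Two small caveats: the H\"ormander weight $e^{-t|z|^2}$ is not needed in the compact-boundary $Z(q)$ setting (the unweighted basic estimate suffices there; weights belong to the unbounded pseudoconvex theory), and the passage from the $\tfrac12$-subelliptic estimate to the one-derivative gain $N_{\ve_2}\colon L^2_k\to L^2_{k+1}$ goes through the a priori estimates \eqref{21:18:00} rather than directly; but neither affects the soundness of the outline. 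As a proof sketch deferring the technical core to \cite{HD}, this is an accurate account of the intended argument.
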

We will make precise what inequalities of the form \eqref{21:18:00} or \eqref{15:53:51} mean in Section \ref{sec:leibnizbounds} where we introduce Leibniz bounds and give some simple examples of those. For now it is enough to understand that these inequalities describe bounds on the $L^2_{k+1}$ and $C^{k+1}$ norms of the left hand side in terms of bounds on the corresponding norms of the right hand side, which includes information about the size of the deformation parameter. That is, these inequalities provide uniform bounds for the Neumann operator of nearby complex structures.

\begin{rem}
By this theorem, if $M$ is $q$-convex and $\varphi\in \Omega^{0,q}(M)$ then $\varphi=\Delta_{\ve_2} N_{\ve_2}\varphi+\pi_{\ve_2}\varphi$. However, only if $\varphi$ satisfies the two Neumann boundary conditions in \eqref{08:58:09} do we have 
\begin{align}\label{14:00:04}
\varphi= N_{\ve_2}\Delta_{\ve_2}\varphi+\pi_{\ve_2}\varphi.
\end{align} 
\end{rem}


\section{Main results}\label{sec:main results}

With Hodge theory under our belts we have nearly all the analytical tools to prove our main theorems. Since the proof is rather technical, we will first state our main results in this section, discuss some applications in Section \ref{Applications} and provide the proof  in the final section of this paper.
\subsection{Statement of the main theorems}

Let $M$ be a compact manifold with boundary, $I$ be an integrable complex structure on $M$ and $\sigma$ be a holomorphic Poisson structure on $(M,I)$. Our first main result concerns small deformations of the generalized complex structure that corresponds to $(I,\sigma)$. 

\begin{thm}\label{14:11:52}
Let $(M,I,\sigma)$ be a compact holomorphic Poisson manifold with boundary. Suppose that $(M,I)$ is $2$- and $3$-convex and that $H^{0,2}(M,I)=0$. Then any sufficiently small generalized complex deformation of $(M,I,\sigma)$ is $B$-field equivalent to another holomorphic Poisson structure on $M$ for some exact two-form $B$ on $M$.  
\end{thm}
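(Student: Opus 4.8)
The plan is to follow the blueprint of Theorem \ref{15:13:30}. Writing a nearby generalized complex deformation as $\ve = \ve_1 + \ve_2 + \ve_3$ with respect to \eqref{14:30:49}, the deformed structure is holomorphic Poisson exactly when its component $\ve_3$ vanishes, so the entire task is to produce a $B$-field transform that kills $\ve_3$. Since $H^{0,2}(M,I) = 0$ there will be no harmonic obstruction and it suffices to consider gauge transformations $e^{d(u + \bar u)}$ with $u$ a $(0,1)$-form, which are exact $B$-fields. By \eqref{13:21:37} the infinitesimal effect of such a transform on $\ve_3$ is $\delbar_{\ve_2}\big(u - \ve_2(\bar u)\big) - \lb \ve_3, (\sigma + \ve_1)(\bar u)\rb$, whose leading term is $\delbar_{\ve_2} u$, the remaining terms being of higher order in the (small) deformation. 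To first order, then, removing $\ve_3$ amounts to solving the linear equation $\delbar_{\ve_2} u = -\ve_3$.

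For the linear problem I would invoke the Hodge theory of Theorem \ref{15:35:57}, and this is exactly where the convexity hypotheses enter. Two-convexity provides the Hodge decomposition on $(0,2)$-forms; combined with the vanishing of the degree-$2$ harmonic space $\mcH_0$ (the harmonic incarnation of $H^{0,2}(M,I) = 0$), part~4) of Theorem \ref{15:35:57} gives $\mcH_{\ve_2} = 0$ together with the uniform Leibniz bounds \eqref{15:53:51} on $N_{\ve_2}$ for all nearby $\ve_2$. Integrability of $\ve$ forces $\delbar_{\ve_2}\ve_3 = 0$ by \eqref{1:07:34}, so setting $u := -\delbar_{\ve_2}^\ast N_{\ve_2}\ve_3$ the Hodge decomposition $\ve_3 = \delbar_{\ve_2}\delbar_{\ve_2}^\ast N_{\ve_2}\ve_3 + \delbar_{\ve_2}^\ast\delbar_{\ve_2} N_{\ve_2}\ve_3$ yields $\delbar_{\ve_2} u = -\ve_3 + \delbar_{\ve_2}^\ast\delbar_{\ve_2} N_{\ve_2}\ve_3$. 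This mirrors step~1 of Theorem \ref{15:13:30}, where the analogous error term $\delbar^\ast G \delbar\ve_3 = -\delbar^\ast G \lb \ve_2,\ve_3\rb$ is of order $\ve_3$ and is absorbed into the invertible operator $1 + \delbar^\ast G \lb \ve_2, \cdot\rb$. Because the error here lives one bidegree up, in $(0,3)$, its estimation requires the Hodge decomposition on $(0,3)$-forms, and this is precisely what $3$-convexity supplies; the error is quadratically small and vanishes outright in the integrable holomorphic Poisson limit, where $\delbar_{\ve_2}^2 = 0$.

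The obstruction to simply invoking the implicit function theorem, as in the compact case, is the loss of derivatives: the Neumann operator $N_{\ve_2}$ gains only a single derivative, so the solution operator $\ve_3 \mapsto \delbar_{\ve_2}^\ast N_{\ve_2}\ve_3$ is bounded $L^2_k \to L^2_k$ and not $L^2_k \to L^2_{k+1}$, and recomputing $\ve_3$ after gauging loses a derivative through the nonlinearity. I would therefore replace the implicit function theorem by a Nash--Moser iteration in the spirit of \cite{MR656198}: at each stage smooth the approximate solution $u$ by an operator $S_{t_n}$, apply the exact $B$-field $e^{d(S_{t_n} u + \overline{S_{t_n} u})}$, and repeat. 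The residual $\ve_3^{(n+1)}$ is quadratic in $\ve_3^{(n)}$ up to controllable smoothing errors, and the uniform bounds \eqref{15:53:51} on the family $\{N_{\ve_2}\}$ provide the tame estimates that drive convergence. Along the way one must verify that the deformed complex structures $\ve_2^{(n)}$ stay inside the neighbourhood $B$ of Theorem \ref{15:35:57}, so that the Hodge theory keeps applying with trivial harmonics, and that the accumulated gauge is the exponential of an exact two-form.

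The main obstacle is precisely this loss of derivatives and the machinery needed to overcome it: selecting the smoothing parameters $t_n$, proving quadratic decay of $\ve_3^{(n)}$ in the presence of smoothing errors, and above all propagating the uniform Neumann bounds and domain conditions of Theorem \ref{15:35:57} through the entire iteration so that every estimate is uniform in $n$. Once this Nash--Moser scheme is in place, the reduction to $\ve_3 = 0$, the solvability of the linear equation, and the bookkeeping of the convexity hypotheses are comparatively formal.
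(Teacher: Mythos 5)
Your proposal follows essentially the same route as the paper's proof: reduce to killing $\ve_3$, take $-\delbar^\ast_{\ve_2}N_{\ve_2}\ve_3$ as an approximate primitive using the Hodge theory of Theorem \ref{15:35:57} (with $2$-convexity for the $(0,2)$ decomposition and $3$-convexity to estimate the error term $\delbar^\ast_{\ve_2}\delbar_{\ve_2}N_{\ve_2}\ve_3$ quadratically in $\ve_3$, as in Lemma \ref{14:51:45}), and run a Nash--Moser iteration with smoothing operators to beat the loss of derivatives. The one refinement the paper makes that you elide is the choice of gauge one-form: rather than $u+\bar u$ with $u=-\delbar^\ast_{\ve_2}N_{\ve_2}\ve_3$, it uses the corrected real one-form $\Phi(\ve)$ of \eqref{14:14:54}, whose $(0,1)$-part \emph{with respect to the deformed structure} $I_{\ve_2}$ equals $-\delbar^\ast_{\ve_2}N_{\ve_2}\ve_3$, so that the residual $\delbar_{\ve_2}(\ve_2(\bar u))$ --- which is only linear in $\ve_3$ with coefficient $|\ve_2|$ and would otherwise threaten the quadratic decay the iteration needs --- is cancelled exactly.
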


\begin{rem}
Note how here, in contrast with Theorem \ref{15:13:30}, we require $H^{0,2}(M,I)=0$ instead of the weaker assumption that $H^2(M) \rightarrow H^{0,2}(M,I)$ is surjective. One of the main reasons we impose the stronger assumption here is that in the proof we heavily rely on the Hodge theory of Theorem \ref{15:35:57} for nearby deformations of $I$, and part 4) of that theorem requires the harmonics of $I$ to vanish to guarantee uniform bounds on the corresponding family of Neumann operators. 
\end{rem}

Our second main result concerns neighbourhoods of submanifolds in generalized complex geometry. Specifically, let $(M,H,\J)$ be a generalized complex manifold and $i:Y\hookrightarrow M$ a compact submanifold with the structure of an Abelian Poisson brane on it (see Def.\ref{Poisson brane}). Here $M$ and $Y$ are manifolds without boundary. From Lemmas \ref{lem:pb=>hol poisson} and \ref{lem:apb=>hol normal bundle} we know that $Y$ is equipped with a complex structure $I_Y$ and that the normal bundle $NY$ is a holomorphic vector bundle over $(Y,I_Y)$. The total space of $NY$ therefore comes equipped with an integrable complex structure denoted by $I$, which depends only on the brane-structure of $Y$.  

\begin{thm}\label{16:02:38}
Let $(M,H,\J)$ be a generalized complex manifold and $(Y,\tau)\subset M$ a compact Abelian Poisson brane. 
Denote by $I$ the induced complex structure on the total space of $NY$. If there exists a neighbourhood $U$ of $Y$ in $NY$ so that $(U,I)$ is $2$- and $3$-convex and $H^{0,2}(U,I)=0$, then there exists a neighbourhood $V$ of $Y$ in $M$ and an exact two-form $B\in\Omega^2(V)$ so that $\J$ is $B$-field equivalent to a holomorphic Poisson structure on $V$.     
\end{thm}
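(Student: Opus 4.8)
The plan is to reduce Theorem \ref{16:02:38} to the rigidity result of Theorem \ref{14:11:52} applied to the total space of the normal bundle $NY$. By Lemmas \ref{lem:pb=>hol poisson} and \ref{lem:apb=>hol normal bundle} the brane $Y$ carries a complex structure $I_Y$ together with an induced holomorphic Poisson structure, and $NY$ is a holomorphic vector bundle over $(Y,I_Y)$; this is exactly the data equipping the total space of $NY$ with the integrable complex structure $I$ in the statement. The first step is to produce, on a neighbourhood of the zero section in $NY$, a \emph{reference} holomorphic Poisson structure $\JJ_0=\JJ_{(I,\sigma_0)}$ (in the sense of Example \ref{11:37:30c}) whose $1$-jet along $Y$ reproduces the brane data of $(Y,\tau)$. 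Because $Y$ is an Abelian Poisson brane, the induced Poisson bivector vanishes quadratically in the normal directions, so $\sigma_0$ can be chosen compatibly with the fibrewise scaling on $NY$.

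Next I would transport $\JJ$ to $NY$. Choosing a tubular neighbourhood identifies a neighbourhood of $Y$ in $M$ with a neighbourhood $W$ of the zero section in $NY$, and carries $\JJ$ (together with $H$) to $W$. After a $B$-field transform normalising $\JJ$ along $Y$, the machinery of Section \ref{subsec:framework} lets us write $\JJ$ as the deformation $L_\ve$ of the Dirac structure $L$ of $\JJ_0$ for a parameter $\ve\in C^\infty(\Lambda^2\tL)$. The key point, which the conditions of Definition \ref{Poisson brane} are designed to guarantee, is that $\ve$ vanishes to first order along the zero section $Y$: the condition $\JJ N^\ast Y=N^\ast Y$ fixes the $0$-jet in the normal directions, the Abelian condition kills the quadratic normal part of the Poisson bivector, and the holomorphicity of $NY$ matches the remaining transverse data.

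The heart of the argument is then a scaling argument. Let $m_t\colon NY\to NY$ denote fibrewise multiplication by $t\in(0,1]$; this is a biholomorphism of $(NY,I)$ fixing $Y$, so each scaled neighbourhood is biholomorphic to its image and hence remains $2$- and $3$-convex with vanishing $H^{0,2}$. Fix inside $W$ a compact disc-bundle neighbourhood $U$ with boundary enjoying the two properties guaranteed by hypothesis. Since $m_t$ preserves $\JJ_0$ in the controlled manner arranged in the first step and since $\ve$ vanishes to first order along $Y$, the pulled-back parameter describing $m_t^\ast\JJ$ relative to $\JJ_0$ tends to $0$ in the Sobolev $L^2_k$-norm on $U$ as $t\to 0$. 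Consequently, for $t$ small enough $m_t^\ast\JJ$ is an arbitrarily small generalized complex deformation of the compact holomorphic Poisson manifold with boundary $(U,I,\sigma_0)$. Applying Theorem \ref{14:11:52} yields an exact two-form $B$ on $U$ for which $e^B$ brings $m_t^\ast\JJ$ to holomorphic Poisson form. Pushing this equation forward along the biholomorphism $m_t$ — which preserves both the holomorphic Poisson type and the exactness of $B$ — shows that $\JJ$ is $B$-field equivalent to a holomorphic Poisson structure on the neighbourhood $V:=m_t(U)$ of $Y$, with an exact $B$-field.

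\textbf{Main obstacle.} I expect the difficult step to be the scaling estimate: verifying that the deformation parameter genuinely decays to zero in the relevant norm under $m_t$. This forces one to pin down precisely which order of vanishing along $Y$ each of the three Abelian Poisson brane conditions provides, and to reconcile the fibrewise homogeneity of $\sigma_0$ with the (non-homogeneous) higher-order terms of $\JJ$, all while keeping the Sobolev norms uniformly controlled. Compactness of $Y$, and the fact that $m_t$ is a biholomorphism so that the convexity and cohomological hypotheses survive on every scaled neighbourhood, are what make this control possible.
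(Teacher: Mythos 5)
Your proposal follows essentially the same route as the paper: transport $\JJ$ to $NY$ via a tubular neighbourhood, use the fibrewise rescalings $m_t$ to show that $m_t^\ast\JJ$ converges to a holomorphic Poisson structure whose underlying complex structure is $I$, apply Theorem \ref{14:11:52} on $U$, and push the resulting exact $B$-field forward along $m_{t_0}$. The only cosmetic differences are that the paper obtains the reference structure $\JJ_0$ as the limit of the rescaling rather than constructing it in advance, and that the vanishing along $Y$ is component-by-component (only the normal--normal part of the bivector vanishes to first order; the mixed parts of $A$, $\pi$ and the tangential part of $\beta$ vanish only to zeroth order, matched to their scaling weights) rather than a uniform first-order vanishing of $\ve$ --- which is exactly the bookkeeping you correctly flag as the main obstacle.
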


\begin{rem}
In the theorem above, the neighbourhood $U$ of $Y$ is assumed to be a compact submanifold of $NY$ with boundary, having zero codimension in $NY$ and containing $Y$ in its interior. This assumption will be implicit in the remainder of this paper when we consider a neighbourhood of a submanifold with appropriate convexity assumptions. In our applications, $U$ will always be the unit disc bundle with respect to some Hermitian metric.
\end{rem}

Even though this theorem is inspired by Theorem \ref{theo:Bailey}, which describes generalized complex structures around points, the general case of $Y$ being a point is not covered by Theorem \ref{16:02:38} because of the assumption that $N^\ast Y$ is Abelian. Nevertheless, when $Y$ is a point, the methods for proving Theorem \ref{16:02:38} are readily adjusted to re-obtain Theorem \ref{theo:Bailey}. Interestingly though, this adjustment only works when $Y$ is a point.

\begin{thm}[\cite{MR3128977}]\label{16:05:53}
Let $(M,\J)$ be a generalized complex manifold and $y\in M$ a point in the complex locus of $\J$. Then there exists a neighbourhood $V$ of $y$ in $M$ and an exact two-form $B\in \Omega^2(V)$ such that $\J$ is $B$-field equivalent  to a holomorphic Poisson structure on $V$. 
\end{thm}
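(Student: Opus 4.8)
The plan is to deduce this local statement from the global Theorem \ref{14:11:52}, applied to a small ball around $y$, by means of a rescaling argument that is available precisely because $y$ is a single point. First I would localize. Since $y$ lies in the complex locus, the type of $\J$ at $y$ is maximal, so $\pi_\J(y)=0$, and by Example \ref{11:37:30a} there is a $B$-field transform making $\J|_y$ equal to $\J_{I_0}$ for a constant complex structure $I_0$. Working in a contractible coordinate ball $B$ centred at $y$ (on which every closed form is exact, so the bookkeeping of exactness for all the $B$-fields involved is automatic and $H$ may be gauged away), I would take as reference the standard integrable holomorphic Poisson structure $(I_0,\sigma=0)$. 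With $\tL=T^{1,0}B\oplus T^{\ast 0,1}B$ as in Section \ref{subsec:framework}, the structure $\J$ is then described by a deformation parameter $\ve=\ve_1+\ve_2+\ve_3$ relative to the decomposition \eqref{14:30:49} which, by construction, vanishes at $y$; here $\ve_1=\pi_\J^{2,0}$, the term $\ve_2$ records the difference of almost complex structures, and $\ve_3$ is the $(0,2)$-obstruction we wish to remove.

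Next I would verify that the hypotheses of Theorem \ref{14:11:52} hold on any such ball, independently of its radius. The standard ball $(B,I_0)$ is strictly pseudoconvex, hence $q$-convex for every $q\geq 1$ (compare the examples following Definition \ref{11:30:04}), and in particular both $2$- and $3$-convex; moreover $H^{0,2}(B,I_0)=0$ since $B$ is Dolbeault-acyclic. Thus the only obstacle to applying Theorem \ref{14:11:52} is the requirement that the deformation be \emph{sufficiently small}. This is where the rescaling enters: pulling $\J$ back by the dilation $w\mapsto\epsilon w$ and letting $\epsilon\to 0$, the three components of $\ve$ acquire the tensorial scaling weights $\epsilon^{+2}$, $\epsilon^{0}$ and $\epsilon^{-2}$ for $\ve_3$, $\ve_2$ and $\ve_1$ respectively, while the vanishing of $\ve$ at $y$ contributes an additional factor $\epsilon$. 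Consequently $\ve_2=O(\epsilon)$ and $\ve_3=O(\epsilon^{3})$ become arbitrarily small, which is exactly the smallness required by the deformation framework once the Poisson part $\sigma+\ve_1$ is treated as background rather than as a small perturbation. Applying Theorem \ref{14:11:52} on the rescaled ball then produces an exact two-form gauging $\ve_3$ to zero, realizing $\J$ as holomorphic Poisson; rescaling back and recombining with the earlier $B$-fields yields an exact two-form on a neighbourhood $V$ of $y$ and finishes the argument.

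The hard part is controlling the Poisson component $\ve_1$, whose scaling weight $\epsilon^{-2}$ is unfavourable: even though $\ve_1$ vanishes at $y$, its rescaled norm grows like $\epsilon^{-1}$, so it cannot be made small and must instead be carried through the Nash--Moser estimates as a bounded-from-below, growing background. The estimates therefore have to be organized so that $\ve_1$ enters only through brackets such as $\lb\sigma+\ve_1,\ve_3\rb$ in \eqref{12:37:22}, where it is paired against the super-small $\ve_3$, keeping all products controlled uniformly across the iteration. This delicate balancing of scaling weights is exactly what the zero-dimensionality of $Y$ makes possible: for a point the dilation shrinks \emph{all} directions simultaneously, whereas for a positive-dimensional brane the same dilation leaves the directions tangent to $Y$ untouched, so the deformation along $Y$ never becomes small. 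This is the precise reason the adjustment of Theorem \ref{16:02:38} recovers Bailey's theorem only when $Y$ is a point.
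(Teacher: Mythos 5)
Your reduction to Theorem \ref{14:11:52} via dilation is the right starting point, and your scaling analysis correctly isolates the obstacle: under $m_\epsilon\colon x\mapsto \epsilon x$ the component $\ve_1$ carries weight $\epsilon^{-2}$, so even using that $\pi_\J$ vanishes at $y$ its pullback still blows up like $\epsilon^{-1}$. The gap is in how you propose to deal with this. Theorem \ref{14:11:52} (in its quantitative form, Theorem \ref{14:17:27}) requires $|\ve|_a<\delta$ for the \emph{full} deformation, $\ve_1$ included; no version is proved in which $\sigma+\ve_1$ may be ``treated as background'' of unbounded size. Carrying a component growing like $\epsilon^{-1}$ through the Nash--Moser scheme is not a cosmetic reorganization of the estimates: the action $e^{d\xi}\cdot\ve$ is only defined for $|\xi|_1$ small compared to $|\sigma+\ve_1|_0^{-1}$ (Remark \ref{11:43:33}), the Leibniz polynomials in Lemma \ref{20:33:39} have coefficients depending on the $C^0$-norm of $\ve_1$, and the constants $a$, $\delta$, $t_0$ of the iteration all depend on $|\sigma|_0$. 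So the step ``apply Theorem \ref{14:11:52} on the rescaled ball'' is not available as stated, and the argument does not close.

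The paper resolves exactly this point with an ingredient your proposal is missing: the anisotropic rescaling $\lambda_t(X+\xi)=tX+\tfrac1t\xi$ of $\T M$. Although $\lambda_t$ is not a Courant symmetry, for $H=0$ it maps Dirac structures to Dirac structures and sends $\J_{(I,\sigma)}$ to $\J_{(I,t^2\sigma)}$, hence preserves the class of holomorphic Poisson structures. One then considers $\widetilde{\J}_t=\lambda_{\sqrt t}(m_t^*\J)$, whose bivector part is $t\,m_t^*\pi$ and whose two-form part is $\tfrac1t m_t^*\beta$: the extra factor of $t$ exactly cancels the bad weight, and since $\pi$ vanishes at $y$ the bivector converges (to its linearization) while the two-form part tends to zero, so the limit is a genuine holomorphic Poisson structure on the unit ball with the standard complex structure. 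For small $t_0$ the structure $\widetilde{\J}_{t_0}$ is then an honestly small deformation, Theorem \ref{14:11:52} applies, and the resulting exact two-form $B$ is transported back via $(m_{t_0})_*(t_0B)$, the factor $t_0$ undoing $\lambda_{\sqrt{t_0}}$. This also gives the correct reason the argument is confined to points: for a positive-dimensional brane the $\lambda$-corrected family still converges, but $\tfrac1t m_t^*\beta$ need not tend to zero (cf.\ \eqref{11:23:45}), so the limit fails to be upper triangular, i.e.\ fails to be holomorphic Poisson.
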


It is worth mentioning that the proof of Theorem \ref{16:05:53} will produce only a two-form. In contrast, the proof in \cite{MR3128977} produces a two-form and a diffeomorphism, and the resulting complex structure of the holomorphic poisson structure is already linearized. Of course, we can obtain that diffeomorphism by an application of the Newlander-Nirenberg theorem \cite{MR0088770}. 

\subsection{Codimension-one Abelian Poisson branes}

The good thing about Theorem \ref{16:02:38} is that the conditions are all phrased in terms of the holomorphic vector bundle $NY$ and do not refer to the ambient manifold $(M,\J)$. In practise though it may be difficult verifying whether there exists a 2- and 3-convex neighbourhood $U\subset NY$ and whether $H^{0,2}(U)=0$. We present here an important special case where the situation can be considerably simplified.

Suppose that $Y$ has (complex) codimension 1 and that the line bundle $NY$ is negative in the sense of Example \ref{11:18:37}. It follows from that example that $NY$ contains a neighbourhood $U$ which is $q$-convex for all $q\geq 1$. We interpret $Y$ as a complex submanifold of $U$ of codimension 1, denote by $\mcO_U$ the sheaf of holomorphic functions on $U$ and by $\mf{m}\subset \mcO_U$ the ideal of functions vanishing on $Y$. For every integer $k\geq 0$ we have an exact sequence
$$0\rightarrow \mf{m}^{k+1} \rightarrow \mf{m}^k \rightarrow \mf{m}^k/\mf{m}^{k+1}\rightarrow 0, $$  
where $\mf{m}^0=\mcO_U$. Since $\mf{m}^k/\mf{m}^{k+1}\cong i_\ast (N^\ast Y)^{\otimes k}$, where $i:Y\rightarrow U$ denotes the inclusion, the associated long exact sequence yields
$$\ldots \rightarrow H^1(Y,(N^\ast Y)^{\otimes k}) \rightarrow H^2(U,\mf{m}^{k+1}) \rightarrow H^2(U,\mf{m}^k) \rightarrow H^2(Y,N^\ast Y)^{\otimes k})\rightarrow \ldots $$ 
In particular, when $H^1(Y,N^\ast Y^{\otimes k})=H^2(Y,N^\ast Y^{\otimes k})=0$ for all $k\geq 0$ we see that $H^2(U,\mcO_U)\cong H^2(U,\mf{m}^k)$ for all $k\geq 0$. By the vanishing theorem \cite[Satz 2 (page 357)]{MR0137127}, $H^2(U,\mf{m}^k)=0$ for $k$ sufficiently large, yielding $H^{2}(\mcO_U)=0$. Combining this discussion with Theorem \ref{16:02:38}, we obtain the following result.  

\begin{thm}\label{15:25:36}
Let $(Y,\tau)\subset (M,\J)$ be an Abelian Poisson brane of complex codimension 1. Suppose that $NY$ is a negative line bundle and that $H^1(Y,N^\ast Y^{\otimes k})=H^2(Y,N^\ast Y^{\otimes k})=0$ for every integer $k\geq 0$. Then there is a neighbourhood of $Y$ in $M$ on which $\J$ is $B$-field equivalent to a holomorphic Poisson structure. 
\end{thm}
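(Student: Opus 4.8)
The plan is to verify that the hypotheses of Theorem~\ref{16:02:38} are satisfied in this more restrictive codimension-one, negative-bundle setting, and then invoke that theorem directly. Writing $U$ for a suitable disc-neighbourhood of $Y$ inside $NY$ and $I$ for the induced complex structure on it, there are precisely two things to establish: that $U$ can be chosen $2$- and $3$-convex, and that $H^{0,2}(U,I)=0$. The first is immediate: since $NY$ is a negative line bundle over the compact complex manifold $Y$, Example~\ref{11:18:37} produces a disc-neighbourhood $U$ of the zero section that is $q$-convex for every $q\geq 1$, hence in particular simultaneously $2$- and $3$-convex.

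The substantive step is the vanishing of $H^{0,2}(U,I)$. Since $(U,I)$ is an honest complex manifold, Dolbeault's theorem identifies $H^{0,2}(U,I)$ with the coherent sheaf cohomology $H^2(U,\mcO_U)$, so it suffices to show the latter vanishes. I would filter $\mcO_U$ by the powers of the ideal sheaf $\mf{m}$ of $Y$, using the short exact sequences
\[
0\rightarrow \mf{m}^{k+1}\rightarrow \mf{m}^k \rightarrow \mf{m}^k/\mf{m}^{k+1}\rightarrow 0, \qquad \mf{m}^0=\mcO_U.
\]
As $Y$ is a smooth divisor, the graded pieces are $\mf{m}^k/\mf{m}^{k+1}\cong i_\ast (N^\ast Y)^{\otimes k}$ with $i:Y\hookrightarrow U$ the inclusion, so the associated long exact sequences read
\[
\cdots \to H^1(Y,(N^\ast Y)^{\otimes k}) \to H^2(U,\mf{m}^{k+1}) \to H^2(U,\mf{m}^k) \to H^2(Y,(N^\ast Y)^{\otimes k}) \to \cdots.
\]
The hypothesis $H^1(Y,(N^\ast Y)^{\otimes k})=H^2(Y,(N^\ast Y)^{\otimes k})=0$ for all $k\geq 0$ then forces every restriction map $H^2(U,\mf{m}^{k+1})\to H^2(U,\mf{m}^k)$ to be an isomorphism, so that $H^2(U,\mcO_U)\cong H^2(U,\mf{m}^k)$ for all $k$.

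Finally, I would invoke the Grauert vanishing theorem \cite[Satz~2]{MR0137127}, whose applicability is secured precisely by the $q$-convexity obtained in the first step: it gives $H^2(U,\mf{m}^k)=0$ once $k$ is large enough. Combined with the stabilization above, this yields $H^2(U,\mcO_U)=H^{0,2}(U,I)=0$, and with both hypotheses of Theorem~\ref{16:02:38} verified, that theorem supplies the neighbourhood of $Y$ together with the exact two-form $B$ putting $\J$ into holomorphic Poisson form. The only genuinely delicate point is this last invocation: one must be sure that $q$-convexity of the manifold-with-boundary $U$ is exactly the geometric input the vanishing theorem of \cite{MR0137127} requires, and that twisting by the increasingly positive powers $\mf{m}^k$ is what forces $H^2$ to die; everything else is either formal homological algebra or a direct citation of Example~\ref{11:18:37}.
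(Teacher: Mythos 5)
Your proposal is correct and follows essentially the same route as the paper: the $q$-convex disc-neighbourhood from Example~\ref{11:18:37}, the ideal-sheaf filtration with its long exact sequences forcing $H^2(U,\mcO_U)\cong H^2(U,\mf{m}^k)$, Grauert's vanishing theorem \cite[Satz 2]{MR0137127} for large $k$, and then Theorem~\ref{16:02:38}. The only difference is that you spell out the Dolbeault identification $H^{0,2}(U,I)\cong H^2(U,\mcO_U)$ explicitly, which the paper leaves implicit.
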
  

\subsection{Interpreting $H^{0,2}$ as the space of obstructions}

We give some examples of generalized complex structures which fail to admit holomorphic gauges, due to failure of the hypotheses of either Theorem \ref{14:11:52} or Theorem \ref{16:02:38}. The Abelianity condition should be seen, less as an obstruction, and more as a way of choosing a canonical holomorphic structure on the normal bundle, and the 2-- and 3-convexity conditions are just so that we can do Hodge theory and therefore deformation theory. The condition on $H^{0,2}$, however, is more along the lines of the vanishing of an obstruction map, and readily furnishes counterexamples.

\begin{ex}[Twisted complex structure on a compact manifold]\label{gerbe example}
Let $S$ be a compact complex manifold for which the map $H^2(S) \to H^{0,2}(S)$ (taking the $(0,2)$--part of any representative) is \emph{not} surjective. Notice, in particular, that such $S$ will not be K\"ahler.

Let $\epsilon_3 \in \Omega^{0,2}(S)$ represent a class in $H^{0,2}(S)$ not in the image of $H^2(S)$. Then the given complex structure on $S$ deforms as a generalized complex structure by
\begin{align}
\epsilon = 0 + 0 + \epsilon_3.
\end{align}

Because $H^2(S) \to H^{0,2}(S)$ is not surjective, Theorem \ref{14:11:52} does not apply. Indeed, in this case, the deformed generalized complex structure does not have a holomorphic gauge. A holomorphic gauge corresponds to a deformation whose $(0,2)$ part vanishes, and a $B$-field acting on the deformation $\epsilon$ just acts by adding its $(0,2)$ part---since we assumed $[\epsilon_3]$ was not in the image of $H^2(S)$, there is no such real closed $B$-field.

For a concrete example of such a complex manifold, $S$, we can take $SU(5)$. The basic topology of the classical Lie groups is well understood. In the case at hand, $SU(5)$ is known to be 2-connected and then  Hurewicz's theorem tells us that $H_2(SU(5)) \iso \pi_2(SU(5)) \iso 0$. But there is a complex structure on $SU(5)$ for which $H^{0,2}(SU(5)) \iso \C$ (see \cite{ivanov1999}, an application of the techniques in \cite{ALEXANDROV2001251}).

\end{ex}

\begin{ex}
The above example can be extended to the case of a neighbourhood of a brane. Let $S$ and $\epsilon_3$ be as above, and let $X = S \times \C$, with $w$ a coordinate for the second factor. Then we deform the complex structure on $X$ via the  $(0,2)$--form $w \epsilon_3$. Note that, along $S \times \{0\}$, where $w=0$, the generalized complex structure is unchanged by the deformation, so the standard splitting of $TS \dsum T^*S$ gives a brane structure on $S \subset X$.

We remark that the Poisson structure, being trivial, is Abelian about $S$, and $X$ is also 2-- and 3--convex. However, Theorem \ref{16:02:38} fails to apply because $H^{0,2}(X) \neq 0$. Indeed, we can see that there is no $B$-transform which puts the deformed structure into a holomorphic gauge:

Suppose, to the contrary, that $B$ is a real, closed 2-form on $X$ with 
$B^{0,2} = -w\epsilon_3$ (thus, $B$-transforming the deformed structure by this 2-form would cancel $w\epsilon_3$ and hence yield a holomorphic gauge). Because $B$ is closed, $0=(dB)^{1,2} = \bar\del B^{1,1} - \del (w\epsilon_3)$. We can differentiate this equation in the direction of the vector field $\del_w+\del_{\bar w}$, and using that the complex structure on $X=S\times \C$ is constant in the $\C$ direction, we obtain
\begin{align}\label{dB1,2}
0 &= \Lie_{\del_w+\del_{\bar w}} \bar\del B^{1,1} - \Lie_{\del_w+\del_{\bar w}} \del(w\epsilon_3) \notag\\
&= \bar\del \Lie_{\del_w+\del_{\bar w}} B^{1,1} - \del\epsilon_3.
\end{align}
Denote by $i : S\cong S\times \{0\} \rightarrow X$ the inclusion, and define a real two-form on $S$ by
\begin{align*}
\tilde{B} :=-\bar\ve_3 +i^*\left( \Lie_{\del_w+\del_{\bar w}}B^{1,1} \right) - \epsilon_3.
\end{align*}
Then $\tilde{B}^{0,2} = -\epsilon_3$, while from \eqref{dB1,2} we see that $d\tilde{B} = 0$. But, by our assumption that $[\epsilon_3]$ was not in the image of $H^2(S)$, such a $\tilde{B}$ should not exist.
\end{ex}

\section{Applications}\label{Applications}

\subsection{The complex locus in four dimensions}\label{4d section}

As a first application of our  results, we will show in Theorem \ref{4d result} that the complex locus of any four-dimensional generalized complex manifold has a holomorphic neighbourhood. To this end, we first prove some needed results about gluing different holomorphic gauges in a neighbourhood of a point. Note that these preliminary results are valid in all dimensions, it's only in Theorem \ref{4d result} itself that we require the ambient manifold to be $4$-dimensional. 

\subsubsection{Interpolation lemmas}

The basic tools for our four-dimensional result regard the interpolation between holomorphic gauges of a generalized complex structure, as developed in \cite{2014arXiv1403.6909B}. The first result is a local interpolation lemma.

\begin{lem}[{\cite[Theorem 2.1]{2014arXiv1403.6909B}}]\label{BG lemma}
Suppose $\JJ$ is a generalized complex structure which is of complex type at a point $x$, and suppose $B_0$ and $B_1$ are two holomorphic gauges for $\JJ$. Then, in a sufficiently small neighbourhood of $x$, there is a smooth family, $B_t,\, t\in [0,1]$, of holomorphic gauges interpolating between $B_0$ and $B_1$.
\end{lem}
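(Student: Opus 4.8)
The plan is to reduce the statement to a relative problem and then exploit the explicit formula for the action of closed two-forms (Lemma \ref{17:03:57}). First I would fix the gauge $B_0$ and replace $\JJ$ by the holomorphic Poisson structure $\JJ_0 := e^{B_0}\JJ e^{-B_0}$, with data $(I_0,\sigma_0)$. Since $B$-field transforms compose additively, a two-form $B$ is a holomorphic gauge for $\JJ$ if and only if the closed two-form $C := B - B_0$ has the property that $e^{C}\JJ_0 e^{-C}$ is again holomorphic Poisson; interpolating $B_0$ to $B_1$ is then the same as producing a smooth path of closed two-forms $C_t$, from $C_0 = 0$ to $C_1 = B_1 - B_0$, each preserving the holomorphic Poisson form. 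Shrinking to a ball around $x$, every closed two-form is exact, so these are exactly the symmetries arising from the flow of one-forms in the framework of Section \ref{subsec:framework}.

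Next I would make the constraint explicit. Setting $\ve = 0$ in \eqref{17:36:15}, the two-form $C$ preserves the holomorphic Poisson form precisely when
\begin{align*}
C^{0,2} = C^{1,1}\sigma_0\big(1 + C^{2,0}\sigma_0\big)^{-1} C^{1,1},
\end{align*}
together with $dC = 0$. Two features stand out. At $x$ we have $\sigma_0(x) = 0$ (complex type forces the Poisson bivector to vanish there), so the constraint degenerates to the \emph{linear} condition $C^{0,2}|_x = 0$. Moreover, since $e^{B}$ fixes $T^*M$, the subbundle $L \cap T^*_\C M$ is gauge-independent and equals $\{\eta \in T^{*1,0}_{I_B} : \sigma_B(\eta)=0\}$ for every gauge $B$, which at $x$ forces $I_{B_0}|_x = I_{B_1}|_x$. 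Thus the two induced complex structures agree at the centre and, after shrinking, are $C^0$-close throughout.

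With this in hand I would construct the path. When $\sigma_0 \equiv 0$ the defining equation is just the linear condition $C^{0,2}=0$, which cuts out a convex set of closed forms, so the straight line $C_t := t(B_1-B_0)$ already consists of holomorphic gauges and $B_t = (1-t)B_0 + tB_1$ is the answer. In general $\sigma_0$ is small near $x$, so the nonlinear right-hand side is a small perturbation of this linear model; I would take $C_t^{(0)} := t(B_1-B_0)$ as a first approximation and correct it to a genuine solution $C_t$ of the constraint, keeping $C_t$ closed and the endpoints $C_0,C_1$ fixed. The natural mechanism is a Moser-type argument: interpolate the holomorphic Poisson pure spinors $\rho_t$ of $\JJ_0$ and $\JJ_1$ (all defining structures $B$-field equivalent to $\JJ$) and integrate the time-dependent one-form whose exact differential realizes $\dot\rho_t$, using the Poincar\'e lemma on the ball; alternatively one solves the corrected equation by a contraction/implicit-function argument, the smallness of $\sigma_0$ guaranteeing invertibility of $1 + C^{2,0}\sigma_0$ (cf. Remark \ref{11:43:33}).

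The main obstacle is exactly the nonlinearity of the holomorphic Poisson condition away from $x$: because the type may drop in any neighbourhood of $x$, the complex structures $I_{B_0}$ and $I_{B_1}$ need not coincide off the centre, so a naive straight-line correction need not remain integrable or satisfy the constraint at intermediate times. What makes this tractable is the smallness afforded by the hypothesis ``sufficiently small neighbourhood of $x$'': there $\sigma_0$ and $C$ are small, the offending terms are quadratic, and the relevant linearization is elliptic (governed by the $\delbar$-operator of $I_0$), so the correction can be performed and depends smoothly on $t$. I would close by checking smoothness in $t$ and the boundary values $B_t|_{t=0}=B_0$ and $B_t|_{t=1}=B_1$.
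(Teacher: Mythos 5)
You should first note that the paper does not actually prove this lemma: it is imported verbatim as \cite[Theorem 2.1]{2014arXiv1403.6909B}, so there is no in-paper argument to compare against. Judged on its own, your setup is sound: the reduction to a relative problem for $C=B-B_0$ acting on the holomorphic Poisson structure $\JJ_0$, the constraint $C^{0,2}=C^{1,1}\sigma_0(1+C^{2,0}\sigma_0)^{-1}C^{1,1}$ read off from \eqref{17:36:15} with $\ve=0$, the vanishing of $\sigma_0$ at $x$, and the observation that $I_{B_0}|_x=I_{B_1}|_x$ (since $e^B$ fixes $T^*M$, hence fixes $L\cap T^*_\C M$) are all correct, and the $\sigma_0\equiv 0$ case is genuinely convex so the straight line works there.

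The gap is in the only step that carries real content: producing the corrected path $C_t$ when $\sigma_0\not\equiv 0$. Neither mechanism you offer works as stated. Linear interpolation of pure spinors of $\JJ_0$ and $\JJ_1$ need not be pure (purity is a nonlinear condition), and ``integrating the one-form whose exact differential realizes $\dot\rho_t$'' presupposes that the interpolating family already lies in the $B$-field orbit of $\JJ$ --- which is exactly what is to be proved. The implicit-function/contraction route is also not available for free: the error term $(e^{tC_1}\cdot 0)_3$ is $\delbar_{\ve_2}$-closed for the \emph{moving} almost complex structure $I_{(tC_1)}$ (not $\delbar$-closed for $I_0$), so correcting it requires a right inverse for the deformed $\delbar$-operator, with estimates uniform in $t$, on a mere open neighbourhood of $x$ where no Hodge theory is available --- precisely the analytic difficulty the rest of this paper is devoted to, and which it resolves only under convexity and cohomological hypotheses. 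Saying ``the linearization is elliptic'' does not produce such a solution operator, nor does it address that the target constraint ($\delbar_{\ve_2}$-closedness) varies with the unknown. The intended mechanism, visible in the surrounding Lemma \ref{flow equals gauge} and Proposition \ref{gluing prop}, is to realize the interpolation through $\del_t\delbar_t$-primitives and $P$-Hamiltonian flows; your sketch does not reach a point where any such construction is actually carried out, so as written it is a plausible strategy outline rather than a proof.
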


The second result is a global version of the lemma above. To state it we need a little setup. Given a holomorphic Poisson structure $(I,\sigma)$, with $\sigma = -\frac{1}{4}(IP + iP)$, and a smooth family of real functions $f_t,\, t\in[0,1]$, there are two families of generalized complex structures one can construct, and both of them rely on the flow of the $P$-Hamiltonian vector field of $f_t$,  $X_t = P(df_t)$.  The first, which we denote by $\J_{\sigma_t}$, is obtained by flowing $(I,\sigma)$ by $X_t$, that is, $\J_{\sigma_t}$ is a holomorphic Poisson structure $(I_t,\sigma_t)$, which solves the ODE
\begin{align}\label{IODE}
\begin{cases}
(I_0,\sigma_0) &= (I,\sigma)\\
\frac{d}{dt}(I_t,\sigma_t) &= \mathcal{L}_{X_t} (I_t,\sigma_t).
\end{cases}
\end{align}
The second structure, which we denote by $\J_{B_t}$, is the $B$-field transform of $\J_\sigma$ by the real closed 2-form $B_t$ which solves the initial value problem
\begin{equation}\label{eq:B_t dot}
\begin{cases}
B_0 & =0\\
\frac{d}{dt} B_t & = -2i\del_t\bar\del_t f_t,
\end{cases}
\end{equation}
where $\del_t\bar\del_t$ is calculated with respect to the complex structure $I_t$, the solution to \eqref{IODE}.

\begin{lem}\label{flow equals gauge}
Let $(I,\sigma)$ be a holomorphic Poisson structure, with $\sigma = -\frac{1}{4}(IP + iP)$ and $\J_\sigma$ be the corresponding generalized complex structure. For any family of smooth real functions $f_t,\, t\in[0,1]$, the generalized complex structures $\J_{\sigma_t}$ and $\J_{B_t}$ agree.

Conversely, if $B_t$ is a family of real closed 2-forms, with $B_0 =0$, which are holomorphic gauges for $\J_{\sigma}$, that is, $\J_{B_t} = \J_{\tilde\sigma_t}$ for a (unique) family of holomorphic Poisson structures $(\tilde I_t,\tilde\sigma_t)$ and if $\frac{d B_t}{dt} = -2i \del_t \delbar_t f_t$ for some family of real functions, then $(\tilde I_t,\tilde\sigma_t)$ is the $P$-Hamiltonian flow by  $f_t$ of $(I,\sigma)$.
\end{lem}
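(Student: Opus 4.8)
The plan is to show that both families solve one and the same ordinary differential equation in $t$ with the common initial value $\J_\sigma$, and then invoke uniqueness. The entire argument rests on a single infinitesimal identity, which I would isolate first: for a generalized complex structure $\J$ (with vanishing three-form) and a real function $g$, the section $\J(dg)$ generates a flow that preserves $\J$. To see this, note that $dg$ is central for the Courant bracket: by the bracket formula in Lemma \ref{19:16:48}, $\Cour{dg, v} = -\iota_{\anchor(v)} d(dg) = 0$ for every $v$. Writing $dg = \ell + \bar\ell$ with $\ell \in \Gamma(L)$ and $\bar\ell \in \Gamma(\overline L)$, centrality gives $\Cour{\bar\ell, v} = -\Cour{\ell, v}$ for $v \in \Gamma(L)$, and the right-hand side lies in $\Gamma(L)$ by integrability. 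Since $\J(dg) = i\ell - i\bar\ell$, it follows that $\Cour{\J(dg), v} = 2i\Cour{\ell, v} \in \Gamma(L)$, so the infinitesimal symmetry $\J(dg)$ preserves $L$, hence $\J$.

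I would then translate this into the identity I need. Decompose $\J(dg)$ into its vector and covector parts; the vector part is $\pi_\J(dg) = P(dg) =: X_g$, the $P$-Hamiltonian vector field, while the covector part, for a holomorphic Poisson structure $(I,\sigma)$, is $I^* dg$. By the flow formula \eqref{10:12:54} (with $H=0$) the flow of $\J(dg)$ factors as $(\varphi_s)_* e^{C_s}$, where $\varphi_s$ is the Hamiltonian flow of $X_g$ and $\dot C_0 = d(I^* dg) = -2i\del\delbar g$. Differentiating the relation ``the flow preserves $\J$'' at $s=0$, and using that a $B$-field acts by $e^{C} = \Id + \mathfrak{b}_{C}$ with $\mathfrak{b}$ two-step nilpotent, yields
\begin{equation*}
\mcL_{X_g}\J = [\mathfrak{b}_{-2i\del\delbar g}, \J], \tag{$\heartsuit$}
\end{equation*}
where $\mathfrak{b}_{-2i\del\delbar g}$ is the infinitesimal $B$-field action of the real $(1,1)$-form $-2i\del\delbar g$.

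For the forward direction I would apply $(\heartsuit)$ to $\J_{\sigma_t}$ with $g = f_t$ at each fixed $t$. Since Hamiltonian flows preserve the Poisson bivector, $\pi_{\J_{\sigma_t}} = P$ for all $t$, so the vector field in \eqref{IODE} is $X_t = P(df_t)$ and $(\heartsuit)$ gives $\frac{d}{dt}\J_{\sigma_t} = \mcL_{X_t}\J_{\sigma_t} = [\mathfrak{b}_{-2i\del_t\delbar_t f_t}, \J_{\sigma_t}]$. On the other hand, differentiating $\J_{B_t} = e^{B_t}\J_\sigma e^{-B_t}$ directly gives $\frac{d}{dt}\J_{B_t} = [\mathfrak{b}_{\dot B_t}, \J_{B_t}]$, and $\dot B_t = -2i\del_t\delbar_t f_t$ by \eqref{eq:B_t dot}. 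Because the operator $\mathfrak{b}_{-2i\del_t\delbar_t f_t}$ is built from the same $I_t$ (that of \eqref{IODE}) in both cases, the two families satisfy the identical \emph{linear} equation $\frac{d}{dt}\J = [\mathfrak{b}_{-2i\del_t\delbar_t f_t}, \J]$ with common value $\J_\sigma$ at $t=0$; uniqueness forces $\J_{\sigma_t} = \J_{B_t}$.

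The converse uses the same identity together with the fact that $B$-field transforms leave the induced Poisson bivector unchanged, since $\anchor\circ e^B = \anchor$ and $e^{-B}\circ\anchor^* = \anchor^*$; hence $\pi_{\J_{B_t}} = \pi_{\J_\sigma} = P$, so the complex structure $\tilde I_t$ of $\J_{B_t}$ carries Poisson bivector $P$. Applying $(\heartsuit)$ to $\J_{B_t}$ with $g = f_t$ (now with $\del_t\delbar_t$ in $\tilde I_t$) converts $\frac{d}{dt}\J_{B_t} = [\mathfrak{b}_{\dot B_t}, \J_{B_t}]$ into $\frac{d}{dt}\J_{B_t} = \mcL_{P(df_t)}\J_{B_t}$, which is exactly the Hamiltonian evolution \eqref{IODE} with initial value $\J_\sigma$; uniqueness then identifies $(\tilde I_t,\tilde\sigma_t)$ with the $P$-Hamiltonian flow of $(I,\sigma)$ by $f_t$. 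The main obstacle I anticipate is bookkeeping rather than conceptual: matching all sign and factor conventions (the precise sign in $d(I^*dg) = -2i\del\delbar g$, the sign of the generator of $(\varphi_s)_*$ and the flow direction, and the paper's $e^B$ convention), and confirming the two structural facts used as black boxes, namely invariance of $\pi_\J$ under $B$-fields and of $P$ under its own Hamiltonian flow.
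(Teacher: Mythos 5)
Your proof is correct, but it takes a genuinely different route from the paper's. The paper argues by direct computation on the block decomposition: it writes out $\J_{B_t}$ explicitly, identifies its automorphism block as $\tilde I_t = I + PB_t$, verifies via the chain of identities in \eqref{eq:fundamental relation} that $\tilde I_t$ solves the same initial value problem as the Hamiltonian flow $I_t$, and then kills the lower-left two-form block $C_t$ by a separate derivative computation; the converse reuses \eqref{eq:fundamental relation}. You instead isolate the structural fact that for an integrable $\J$ with $H=0$ the section $\J(dg)$ is an infinitesimal Courant symmetry preserving $\J$ --- proved cleanly from centrality of exact one-forms under the Courant bracket together with involutivity of $L$ --- and package its infinitesimal content as the identity $(\heartsuit)$, after which both directions become instances of uniqueness for a linear ODE with externally prescribed coefficients. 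Your single identity subsumes both of the paper's computations (in particular the vanishing of the two-form block comes for free, since uniqueness is applied to the whole tensor $\J$ rather than blockwise), and it makes transparent \emph{why} the lemma holds: the generalized Hamiltonian flow of $f_t$ factors as the $P$-Hamiltonian diffeomorphism composed with the $B$-field transform by $\int_0^t -2i\del_s\delbar_s f_s\,ds$. What the paper's computation buys in exchange is the explicit formula $\tilde I_t = I + PB_t$ and complete control over signs and factors, which is exactly the bookkeeping you flag as outstanding; since $I^*$ acts as $+i$ on $T^{*1,0}M$ in the paper's conventions, one gets $d(I^*dg) = -2i\del\delbar g$, so your normalization does match \eqref{eq:B_t dot} and none of that bookkeeping threatens the argument.
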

\begin{proof}
We start with the first statement. Given $f_t$ and letting $(I_t,\sigma_t)$ and $B_t$ be as above, we have that $\sigma_t = -\frac{1}{4}(I_tP + iP)$ and hence $\J_{\sigma_t}$ and $\J_{B_t}$ are given by
\[
\J_{\sigma_t} = \begin{pmatrix}
-I_t & P \\
0& I_t^* 
\end{pmatrix},\qquad
\J_{B_t} = \begin{pmatrix}
-(I + PB_t) & P \\
-(B_t I + (I^* + B_tP)B_t )& I^* + B_t P 
\end{pmatrix}.
\]
The automorphism part of $\J_{B_t}$, $\tilde I_t : = I + PB_t$, therefore satisfies $\tilde I_0 = I$ and
\begin{equation}\label{eq:fundamental relation}
\tfrac{d\tilde I_t}{dt} = \frac{d}{dt}PB_t = -2iP \del_t\delbar_t f_t = -(4\sigma_t \del_t\delbar_t f_t  + c.c.) = 2i\delbar_t X_t^{1,0} + c.c. = \mathcal{L}_{X_t}I_t,
\end{equation}
where $c.c.$ stands for the complex conjugate of the term preceding it and we have used that, by construction,  $\mathcal{L}_{X_t}I_t =  2i\delbar_t X_t^{1,0} + c.c.$. Therefore $\tilde I_t$ and $I_t$ are solutions to the same initial value problem and hence must be the same endomorphism.

As a consequence we have that  $C_t$, the 2-form part of $\J_{B_t}$,  is given by
\begin{align*}
C_t &= B_tI + (I^* + B_tP) B_t = B_t(I+PB_t) + (I^* + B_tP) B_t - B_t PB_t\\& = B_tI_t+ I_t^*B_t - B_t PB_t.
\end{align*}
It is clear that $C_0 = 0$ and we have further
\begin{align*}
\tfrac{dC_t}{dt} &= \frac{d}{dt}(B_tI_t + I_t^* B_t - B_t P B_t)\\
& = \dot B_tI_t + B_t \dot I_t + \dot I_t^* B_t + I_t^* \dot B_t - \dot B_t P B_t - B_t P \dot B_t\\
& =  B_t \dot I_t + \dot I_t^* B_t - \dot B_t P B_t - B_t P \dot B_t\\
& =  B_t P\dot B_t + \dot B_t P  B_t - \dot B_t P B_t - B_t P \dot B_t\\
&=0,
\end{align*}
where in the third equality we used that $\dot B_t$ is of type (1,1) for $I_t$, hence $\dot B_tI_t + I_t^* \dot B_t =0$ and in the fourth equality we used that, as we have established,  $I_t = I + PB_t$. From this we conclude that $C_t \equiv 0$ and $\J_{B_t} = \J_{\sigma_t}$.

Conversely, given a family, $B_t$, of holomorphic gauges  for $\J_\sigma$ such that $\dot B_t = -2i \del_t\delbar_t f_t$ with respect to the corresponding complex structure, $\tilde I_t = I + P B_t$, then  \eqref{eq:fundamental relation} shows that $\tilde I_t$ satisfies the same differential equation as $I_t$, the $P$-hamiltonian flow of $I$ by $f_t$, hence they agree and
$$\J_{\sigma_t} = \J_{B_t}  = \begin{pmatrix} -I_t & P \\ 0 & I^*_t\end{pmatrix}$$
which is the $P$-Hamiltonian flow of $\J_\sigma$.
\end{proof}

%
%
%

\subsubsection{Gluing different holomorphic gauges about a point}

\begin{prop}\label{gluing prop}
Suppose that $\JJ$ is a generalized complex structure which is of complex type at point $x$, and suppose that $B_0$ and $B_1$ are holomorphic gauges for $\JJ$. Then, for any neighbourhood $V$ of $x$, there exists a holomorphic gauge $B$ and a neighbourhood $U \subset V$ of $x$ such that $B = B_0$ outside of $V$ and $B = B_1$ inside of $U$.
\end{prop}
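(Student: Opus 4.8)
The plan is to reduce to the holomorphic Poisson case and then realize the interpolation between the two gauges as a Hamiltonian flow, which can be cut off by a bump function without destroying either closedness or the holomorphic-gauge condition. First I would apply $B_0$ to pass to the holomorphic Poisson structure $\J_\sigma := e^{B_0}\JJ e^{-B_0}$, for which $C_0 := 0$ and $C_1 := B_1 - B_0$ are two holomorphic gauges; the desired $B$ will be recovered as $B_0$ plus a closed gauge for $\J_\sigma$. By the local interpolation Lemma \ref{BG lemma}, on some neighbourhood $W$ of $x$ there is a smooth family $C_t$, $t\in[0,1]$, of holomorphic gauges for $\J_\sigma$ joining $C_0$ and $C_1$.

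The key structural observation is that this family is generated by a Hamiltonian flow. Differentiating the holomorphic-gauge condition (vanishing of the lower-left block of $e^{C_t}\J_\sigma e^{-C_t}$, i.e. $C_t I + I_t^* C_t = 0$ with $I_t := I + PC_t$) gives $\dot C_t I_t + I_t^* \dot C_t = 0$, so each $\dot C_t$ is a closed real $(1,1)$-form for $I_t$. The local $\del\delbar$-lemma (in its parametrized form, smooth in $t$) then furnishes real functions $f_t$ on a possibly smaller neighbourhood with $\dot C_t = -2i\,\del_t\delbar_t f_t$, where $\del_t\delbar_t$ is taken with respect to $I_t$. By the converse direction of Lemma \ref{flow equals gauge}, the holomorphic Poisson structures underlying $\J_{C_t}$ are exactly the $P$-Hamiltonian flow of $(I,\sigma)$ by $f_t$, with Hamiltonian vector fields $X_t = P(df_t)$.

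Now comes the gluing. I would fix a bump function $\chi$ equal to $1$ on a neighbourhood $U'$ of $x$ and supported in a neighbourhood $V'$ with $U' \subset V' \subset V$ and $V' \subset W$, set $\tilde f_t := \chi f_t$ (extended by zero), and run the $P$-Hamiltonian flow of $(I,\sigma)$ by $\tilde f_t$. By the forward direction of Lemma \ref{flow equals gauge} this produces a family of real closed two-forms $\tilde C_t$ with $\tilde C_0 = 0$ that are holomorphic gauges for $\J_\sigma$, defined globally since $\tilde f_t$ is. Outside $\mathrm{supp}\,\chi$ one has $\tilde f_t = 0$, so $\tilde X_t = 0$ and $\dot{\tilde C}_t = 0$ there, whence $\tilde C_t = 0$ outside $V'$. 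Finally I would set $B := B_0 + \tilde C_1$; this is a closed gauge with $e^{B}\JJ e^{-B} = e^{\tilde C_1}\J_\sigma e^{-\tilde C_1}$ holomorphic Poisson, and $B = B_0$ outside $V$.

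The main obstacle is that the Hamiltonian flow is \emph{not} local, so cutting off $f_t$ need not make $\tilde C_t$ agree with $C_t$ near $x$. This is exactly where the complex-type hypothesis is used: at a point of complex type the Poisson bivector vanishes, $P(x) = 0$, so $X_t(x) = 0$ and $x$ is a common fixed point of all the flows. By continuous dependence on initial conditions over the compact interval $[0,1]$, the $X_t$-trajectories of a sufficiently small neighbourhood $U \subset U'$ of $x$ remain inside $U'$, where $\chi \equiv 1$ forces $\tilde X_t = X_t$ and $\tilde f_t = f_t$; uniqueness of ODE solutions then gives $\tilde C_t = C_t$ on $U$, so that $B = B_0 + C_1 = B_1$ on $U$. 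I expect verifying this agreement, and in particular choosing $U$ small enough relative to the flow, to be the delicate point, while the remaining verifications (that $\dot C_t$ is closed and of type $(1,1)$, and the bookkeeping of the cutoff) are routine.
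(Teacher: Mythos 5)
Your proposal is correct and follows essentially the same route as the paper's proof: interpolate via Lemma \ref{BG lemma}, produce Hamiltonian generators $f_t$ via the local $\del\delbar$-lemma and Lemma \ref{flow equals gauge}, cut off $f_t$ by a bump function, and use the vanishing of $P$ at $x$ to conclude that the two flows (hence the two families of gauges) agree near $x$ while the cut-off flow is trivial outside $V$. Your explicit normalization by $B_0$ and your verification that $\dot C_t$ is a closed $(1,1)$-form for $I_t$ are details the paper leaves implicit, but they do not change the argument.
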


\begin{proof}
In fact, we will construct a family, $C_t,\, t\in[0,1]$, of holomorphic gauges such that $C_0 = B_0$ and $C_1$ equals the desired $B$ of the Proposition.

First, we invoke Lemma \ref{BG lemma}, to produce a family, $B_t$, of holomorphic gauges interpolating between $B_0$ and $B_1$, defined on some neighbourhood $V' \subset V$ of $x$. We call the corresponding family of holomorphic Poisson structures $(I_t,\sigma_t)$, with $\sigma_t = -\frac{1}{4}(I_tP + iP)$. The complex Poincar\'e lemma allows us to find (on a sufficiently small neighbourhood $V'' \subset V'$ of $x$) a smooth family of $\del\bar\del$--primitves for $\dot{B}_t$, i.e., a smooth family of functions, $f_t$, such that
\begin{align}\label{idd1}
\frac{d}{dt}B_t = -2i\del_t\bar\del_t f_t
\end{align}
(where $\del_t\bar\del_t$ is always calculated with respect to the complex structure, $I_t$, of the holomorphic gauge $B_t$). By Lemma \ref{flow equals gauge}, we conclude that the family $(I_t,\sigma_t)$ is also generated by the $P$-Hamiltonian flow of $f_t$.

Next, we define  $h_t  = \psi f_t$, where $\psi$ is a smooth function which vanishes outside of $V''$ and which equals $1$ on some neighbourhood of $x$. The function $h_t$ generates a family of $P$-Hamiltonian diffeomorphisms about $x$ and, by pushing forward $(I_0,\sigma_0)$ through this family, we get a family, $(J_t,\beta_t)$ of holomorphic Poisson structures, with $(J_0,\beta_0) = (I_0,\sigma_0)$, related by this $P$-Hamiltonian flow. By Lemma \ref{flow equals gauge}, we conclude that the $(J_t,\beta_t)$ are also related by a family of $B$-transforms $e^{C_t}$, where
\begin{align}\label{idd2}
\frac{d}{dt} C_t = i\del_t\bar\del_t h_t
\end{align}
(where $\del_t\bar\del_t$ is taken with respect to $J_t$).

Since $P$ vanishes at $x$, $x$ is a fixed point of both Hamiltonian flows. Then since $h_t = f_t$ on a neighbourhood of $x$, there is some neighbourhood $U \subset V''$ of $x$ on which both Hamiltonian flows---and thus the two families $(I_t,\sigma_t)$ and $(J_t,\beta_t)$---are equal for all $t \in [0,1]$.

Therefore, on $U$, the right hand sides of \eqref{idd1} and \eqref{idd2} are equal, and thus so are $B_t$ and $C_t$. Setting $B = C_1$, we have a holomorphic gauge for which $B = B_1$ on $U$. But since $h_t = 0$ outside of $V$, we also have that $C_1 = C_0 = B_0$ outside of $V$.
\end{proof}

\begin{cor}\label{gluing intersections}
Suppose that $Y_0$ and $Y_1$ are closed subsets of generalized complex manifold $(M,\II)$ intersecting at isolated points of complex type, and suppose that $Y_0$ and $Y_1$ have neighbourhoods $W_0$ and $W_1$ with holomorphic gauges $B_0$ and $B_1$. Then there exists a neighbourhood $W$ of $Y_0 \cup Y_1$ with a common holomorphic gauge $B$ which equals $B_0$ near $Y_0$ and, except in an arbitrarily small neighbourhood of $Y_0 \cap Y_1$, $B_1$ near $Y_1$.
\end{cor}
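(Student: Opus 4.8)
The genuine content of this statement is already contained in Proposition~\ref{gluing prop}; what remains is a separation-and-patching argument, so the plan is to feed the proposition into a careful bookkeeping of overlaps. Write $Y_0\cap Y_1=\{p_i\}$ for the set of intersection points, which is discrete and closed since the $p_i$ are isolated, and recall that each $p_i$ is of complex type and lies in $W_0\cap W_1$, so both $B_0$ and $B_1$ are defined near it. I would fix pairwise disjoint (and, if the $p_i$ are infinite, locally finite) neighbourhoods $V_i\ni p_i$, chosen as small as the desired exceptional region requires.

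The first real step is to localize at the $p_i$. At each one I would apply Proposition~\ref{gluing prop} to the pair $(B_1,B_0)$, i.e.\ with the roles of the two gauges interchanged, and with the neighbourhood $V_i$. This yields a holomorphic gauge $B^{(i)}$ and a smaller neighbourhood $U_i\subset V_i$ of $p_i$ with $B^{(i)}=B_0$ on $U_i$ and $B^{(i)}=B_1$ outside $V_i$; inspecting the proof (where the interpolation is switched off by a cutoff supported in some $V_i''$ with $\overline{U_i}\subset V_i''\subset\overline{V_i''}\subset V_i$) one sees moreover that $B^{(i)}=B_1$ on the collar $V_i\setminus\overline{V_i''}$. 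This collar agreement is the crucial point: it lets me glue the local gauges $B^{(i)}$ on $\bigcup_i V_i$ to $B_1$ on $W_1\setminus\bigcup_i\overline{V_i''}$ into a single holomorphic gauge $\tilde B_1$ defined on a neighbourhood of $Y_1$, with $\tilde B_1=B_1$ off $\bigcup_i V_i$ and $\tilde B_1=B_0$ on each $U_i$.

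The second step is the global patch. The sets $Y_0$ and $Y_1\setminus\bigcup_i U_i$ are closed and disjoint, since their only possible common points are the $p_i$, each of which has been removed; hence by normality of $M$ they admit disjoint open neighbourhoods $N_0\subset W_0$ and $N_1$ contained in the domain of $\tilde B_1$. I would then put $W:=N_0\cup N_1\cup\bigcup_i U_i$, which contains $Y_0\cup Y_1$, and define $B:=B_0$ on $N_0$ and $B:=\tilde B_1$ on $N_1\cup\bigcup_i U_i$. The only overlap between the two prescriptions is $\bigcup_i(N_0\cap U_i)$, and there both equal $B_0$, so $B$ is a well-defined holomorphic gauge on $W$. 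Finally $B=B_0$ on all of $N_0\supset Y_0$, while on $Y_1\setminus\bigcup_i V_i\subset N_1$ we have $B=\tilde B_1=B_1$, with $\bigcup_i V_i$ an arbitrarily small neighbourhood of $Y_0\cap Y_1$; this is exactly the asserted conclusion.

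The main thing to watch is purely organizational: the three local prescriptions $B_0$, $B_1$ and the interpolating $B^{(i)}$ must only ever be compared on regions where they coincide exactly. This is guaranteed on two fronts — the collar agreement $B^{(i)}=B_1$ near $\partial V_i$ supplied by the proposition, and the disjointness of $N_0$ and $N_1$ away from the intersection. Because every transition is an honest equality of gauges rather than an interpolation carried out by an external cutoff, no error terms are introduced in the patching, and the smoothness and holomorphic-gauge property of $B$ are inherited directly from those of $B_0$, $B_1$ and the $B^{(i)}$.
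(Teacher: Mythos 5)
Your proof is correct and follows essentially the same route as the paper's: localize at each intersection point, invoke Proposition~\ref{gluing prop} to interpolate between the two gauges there, and then patch the result to the unmodified gauge on the rest. The only (harmless) differences are that you modify $B_1$ on $W_1$ rather than $B_0$ on $W_0$ --- which in fact matches the stated conclusion (full agreement with $B_0$ near $Y_0$, exception only on the $Y_1$ side) more literally than the paper's mirror-image construction --- and that your treatment of several intersection points via disjoint $V_i$ and a normality/separation argument is more explicit than the paper's terse ``shrink and ignore far-away points''.
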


\begin{proof}

Let $\{x_i\}_{i}$ denote the isolated set of intersection points between $Y_0$ and $Y_1$, let $\{Z_i\}_i$ be a mutually disjoint set of open neighbourhoods of the $x_i$ with $Z_i \subset W_0\cap W_1$ and let $V_i$ be slightly smaller neighbourhoods with $\overline{V_i}\subset Z_i$. By Proposition \ref{gluing prop} (applied to the generalized complex manifolds $Z_i$) there exists smaller open neighbourhoods $\{U_i\}_i$ with $U_i \subset V_i$, and holomorphic gauges $B_i$ on $Z_i$ with the property that $B_i=B_1$ on $Z_i\backslash V_i$ and $B_i=B_0$ on $U_i$. Now let $\tilde{W}_0 \subset W_0$ and $\tilde{W}_1 \subset W_1$ be smaller neighbourhoods of $Y_0$ and $Y_1$ with the property\footnote{These can be constructed as follows. Let $U:=\cup U_i$ and choose disjoint open neighbourhoods of $Y_0\cap U^c$ and $Y_1\cap U^c$ (which are disjoint closed subsets of a manifold). The union of these with $U$ itself gives the desired $\tilde{W}_0$ and $\tilde{W}_1$.} that $\tilde{W}_0 \cap \tilde{W}_1 \subset \cup_iU_i$. We can now construct the desired $2$-form $B$ on $W:=\tilde{W}_0 \cup \tilde{W}_1$ as follows. On $\tilde{W}_0$ we let it be $B_0$, on $\tilde{W}_1\backslash \cup_i\overline{V_i}$ we let it be $B_1$ and on $\tilde{W}_1\cap Z_i$ we let it be $B_i$. This is well-defined since firstly $B_i=B_1$ on $Z_i\backslash V_i$, and secondly because $\tilde{W}_0\cap\tilde{W}_1\subset\cup_iU_i$ and on each $U_i$ we know that $B_i=B_0$. 
\end{proof}

\subsubsection{A neighbourhood theorem for the complex locus in four dimensions}

\begin{lem}\label{h^top on vb}
Let $Y$ be complex manifold and $K \to Y$ a holomorphic vector bundle. Let $h$ be an Hermitian metric on $K$ and $D:=\{z \in K| |z| \leq 1\}$ the associated disc bundle. Then 
$H^{0,n+k}(D) = 0$, where $n:=\dim_\C Y$ and $k:=\rank_\C K$.
\end{lem}

\begin{proof}
Let $\beta \in \Omega^{0,n+k}(D)$ be $\delbar$--closed and let $(\mathbf{w},\mathbf{z}) := (w_1,\ldots,w_k,z_1,\ldots,z_n)$ be coordinates for a local trivialization of $K \to Y$, with the zero section $Y \subset K$ corresponding to $\mathbf{w}=0$, and $\mathbf{z}$ being coordinates along $Y$. Then
\begin{align}
\beta \,=\, f \, d\bar{\mathbf{w}} \^ d\bar{\mathbf{z}} \,:=\, f\, d\bar{w}_1 \^ \ldots \^ d\bar{w}_k \^ d\bar{z}_1 \^ \ldots \^ d\bar{z}_n
\end{align}
for some smooth function $f$. Locally in $Y$, we can find a $\delbar$-primitive for $\beta$ by finding a $\delbar$ primitive, $\alpha$, for $f\,d\bar{\mathbf{w}}$ along each fiber, and taking the $(0,n+k-1)$--form $\alpha \^ d\bar{\mathbf{z}}$.

Choose a smooth partition of unity subordinate to a nice cover of $Y$ (whose open sets have coordinate charts).  This pulls back to a partition of unity over $D$, whose bump functions we call $\phi^{(i)}$, with $i$ indexing the open set. Let $\mathbf{z}^{(i)}$ be the corresponding local coordinates on $Y$, and $\alpha^{(i)} \wedge d\bar{\mathbf{z}}^{(i)}$ the corresponding local primitive for $\beta$, constructed as above. Since $\phi^{(i)}$ comes from $Y$ we have $\delbar \phi^{(i)} \wedge d\bar{\mathbf{z}}^{(i)} = 0$ and hence the globally-defined form $\sum_i \phi^{(i)} \alpha^{(i)} \wedge d\bar{\mathbf{z}}^{(i)}$ is a $\delbar$--primitive for $\beta$.
\end{proof}

We can finally prove the promised unconditional result about generalized complex 4-manifolds:

\begin{thm}\label{4d result}
	Suppose $Y$ is the compact complex locus inside a generalized complex 4-manifold $(M,\JJ)$. Then $\JJ$ is equivalent to a holomorphic Poisson structure in a neighbourhood of $Y$ in $M$.
\end{thm}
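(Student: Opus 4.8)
The plan is to combine the neighbourhood theorem for Abelian Poisson branes with the gluing machinery of the previous subsection: I would realize $Y$ as the union of the smooth complex curves of which it consists, place a holomorphic gauge near each by Theorem \ref{16:02:38}, and then splice these gauges together across the finitely many singular points of $Y$ using Corollary \ref{gluing intersections}.

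First I would read off the local structure of $Y$ from Bailey's normal form. By Theorem \ref{theo:Bailey}, near any $p\in Y$ the structure $\JJ$ is, up to a diffeomorphism and a $B$-field, a holomorphic Poisson structure on $\mathbb{C}^2$; since every bivector on a complex surface is automatically Poisson this has the form $\sigma=f\,\partial_{z_1}\wedge\partial_{z_2}$ with $f$ holomorphic and $f(p)=0$. The complex locus is then $\{f=0\}$, so $Y$ is locally a complex hypersurface in $\mathbb{C}^2$: a pure one-dimensional complex curve with no isolated points, smooth away from a discrete set of singular points. Since $Y$ is compact it has finitely many components and finitely many singular points, and I would present it as a finite union of smoothly embedded closed complex curves $Y_1,\dots,Y_m$ meeting one another only at isolated points of complex type (using Theorem \ref{16:05:53} to supply a local gauge at any singularity that is not a transverse crossing).

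Next I would verify that each smooth closed curve $Y_i$ is a codimension-one Abelian Poisson brane and apply Theorem \ref{16:02:38} to it. Along the complex locus $\pi_{\JJ}=0$ and $\JJ$ preserves $T^*M$; in the local holomorphic Poisson form, where moreover $P$ vanishes on $Y_i$ and $Y_i$ is an $I$-complex curve, one checks that $\JJ N^*Y_i=N^*Y_i$, so $Y_i$ is a generalized Poisson submanifold, automatically Abelian since it has complex codimension one. In the same local form $\tau:=TY_i\oplus N^*Y_i$ is a $\JJ$-invariant maximal isotropic subbundle of $N^*Y_i^\perp$ whose anchor maps onto $TY_i$ with involutive image, which is the required brane structure. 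The hypotheses of Theorem \ref{16:02:38} are then free in this dimension: the total space of $NY_i$ has complex dimension two, so $(U,I)$ is automatically $2$-convex (an almost complex $n$-fold is always $n$-convex) and the $3$-convexity hypothesis imposes no condition, while $H^{0,2}(U)=0$ for a small disc-bundle neighbourhood $U$ of the zero section by Corollary \ref{h0,2 on NY} (using that $NY_i$ is a holomorphic line bundle over the curve $Y_i$). The theorem then yields, for each $i$, a neighbourhood $W_i$ of $Y_i$ and an exact two-form $B_i$ putting $\JJ$ in a holomorphic gauge on $W_i$.

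Finally I would assemble a single gauge near $Y$. The curves $Y_i$ (and any Bailey balls around non-transverse singularities) overlap only in isolated points of complex type, so Corollary \ref{gluing intersections} applies: glue $B_1$ and $B_2$ across $Y_1\cap Y_2$ to obtain a common gauge near $Y_1\cup Y_2$, then glue in $B_3$ across its isolated intersections with $Y_1\cup Y_2$, and continue. After $m$ steps this produces an exact $B$ with $\JJ$ of holomorphic Poisson type on a neighbourhood $V$ of $Y$. The main obstacle is precisely this last assembly together with the reduction preceding it, namely controlling the global, possibly singular, geometry of $Y$: one must arrange that all pieces meet only in isolated points of complex type so that the interpolation results \ref{BG lemma}--\ref{gluing prop} apply, and ensure that each gluing step does not spoil the gauges already fixed on the earlier components. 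By contrast the analytic heart of the general theorem---convexity and the vanishing of $H^{0,2}$---is automatic in real dimension four.
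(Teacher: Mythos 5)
Your overall architecture --- put a holomorphic gauge near each smooth piece via Theorem \ref{16:02:38}, then splice across isolated complex points via Corollary \ref{gluing intersections} --- matches the spirit of the paper, but there is a genuine gap in the reduction step: the claim that $Y$ can be presented as a finite union of \emph{smoothly embedded closed} complex curves $Y_1,\dots,Y_m$ meeting only at isolated points is false for general plane-curve singularities. The complex locus is locally the zero set of a holomorphic function on $\C^2$, and such a zero set can have an irreducible singular branch (e.g.\ a cusp $y^2=x^3$) which is not a smooth submanifold near the singular point and does not decompose further; a single global component can also self-intersect, so it is at best immersed, not embedded. Your parenthetical fix --- invoking Theorem \ref{16:05:53} at a non-transverse singularity --- does not repair this: a local gauge on a ball around the singular point must then be glued to a gauge near the rest of that branch, but the rest of the branch is a non-compact punctured curve to which Theorem \ref{16:02:38} does not apply, and the ball and the branch-closure intersect along a one-dimensional set rather than at isolated points, so Corollary \ref{gluing intersections} does not apply either.

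The paper's proof closes exactly this gap by resolving the singularities: it recursively blows up the singular points of $Y$ as generalized complex manifolds (using Theorem \ref{16:05:53} to put a local holomorphic gauge at each point and the blow-up construction of \cite{2016arXiv160202076B}), obtaining $\pi:\tilde M\to M$ with a \emph{smooth} compact proper transform $\tilde Y$ and a canonical gauge $B_0$ near the exceptional divisor $E$. Theorem \ref{16:02:38} then applies to $\tilde Y$ upstairs (your verification that convexity and $H^{0,2}=0$ are automatic in this dimension is correct and is the same as the paper's), Corollary \ref{gluing intersections} splices the resulting gauge with $B_0$ across the isolated points of $\tilde Y\cap E$, and the combined gauge descends to a neighbourhood of $Y$ in $M$ because $\pi$ is an isomorphism away from $E$ while near $E$ the gauge is by construction a pullback. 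You should replace your decomposition step with this resolution argument; the rest of your outline then goes through.
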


\begin{proof}
Locally about any point, $Y$ looks like the vanishing locus of a holomorphic section of a line bundle (the anticanonical bundle) on $\C^2$. Therefore it has components of complex dimensions $1$ and $2$. On any component of complex dimension 2, the result is trivial; so we assume $Y$ locally has the structure of a complex curve.

However, $Y$ need not be smooth. Therefore, we will take a certain resolution of $Y$ and $M$, and we shall see that Theorem \ref{16:02:38} does in fact apply to this resolution. Then we will use the earlier results of this section to show that the local holomorphic gauge on the resolution passes to $Y$ in $M$.

The authors studied generalized complex blow-ups previously in \cite{MR3894047}. In 2 complex dimensions the blow-up of a point of complex type, $y \in Y \subset M$, can be described as follows: Using Theorem \ref{16:05:53}, choose a local holomorphic gauge in a neighbourhood of $y \in M$. Then blow up $M$ at $y$ just as a complex manifold, giving $\pi : \tilde{M} \to M$ with exceptional divisor $E \subset \tilde{M}$ and $\pi : \tilde{M} \setminus E \to M \setminus \{y\}$ an isomorphism. By a result of Polishchuk \cite{MR1465521}, the holomorphic Poisson structure about $y$ pulls back to a holomorphic Poisson structure about $E$. Then the generalized complex structure on $M \setminus \{y\}$ pulls back to $\tilde{M} \setminus E$, which then glues to the pullback holomorphic Poisson structure to give a generalized complex structure on $\tilde{M}$. It is true that this blow-up does not depend on the choice of gauge used to describe it \cite{MR3894047}, but  this fact is unimportant here.

Singularities of a complex curve may be resolved by, at each singular point, applying a sequence of blow-ups. Thus, we apply the above blow-up construction recursively to the singular points, $\{y_1,y_2,\ldots\}$, of $Y$ and its blow-ups, giving a full resolution,
\begin{align}
\pi : \tilde{M} \to M,
\end{align}
with exceptional divisor $E = \pi^{-1}(\{y_1,y_2,\ldots\}) \subset \tilde{M}$ and
\begin{align}\label{blowdownmap}
\pi : \tilde{M} \setminus E \to M \setminus \{y_1,y_2,\ldots\}
\end{align}
an isomorphism, such that the smooth proper transform\footnote{This is by definition the closure of $\pi^{-1}(Y)$, where $\pi$ is the map given by \eqref{blowdownmap}.} $\tilde{Y} \subset \tilde{M}$ surjects onto $Y$. Note that the generalized complex structure, $\tilde{\JJ}$, on $\tilde{M}$ comes with a choice, $B_0$, of holomorphic gauge in a neighbourhood of $E$, coming from the fact that, in this neighbourhood, $\tilde{\JJ}$ is constructed just as a holomorphic Poisson blow-up.

By construction, the proper transform $\tilde{Y} \subset \tilde{M}$ is a smooth complex curve. Any generalized complex structure of complex type in 1 complex dimension is automatically of block diagonal form, and thus in a holomorphic gauge---therefore $\tilde{Y}$ carries a brane structure. Furthermore, the unit disc bundle $D\subset N\tilde{Y}$ (with respect to an auxiliary Hermitian metric) is automatically 2- and 3-convex since $N\tilde{Y}$ has rank $1$, and satisfies $H^{0,2}(D)=0$ by Lemma \ref{h^top on vb}. Finally, since $\tilde{Y}$ has complex codimension 1, it is an Abelian brane. Thus, Theorem \ref{16:02:38} applies, and we have a holomorphic gauge, $B_1$, in some neighbourhood of $\tilde{Y}$ in $\tilde{M}$.

The gauges $B_1$ and $B_0$ may not agree where they are both defined. However, by Corollary \ref{gluing intersections}, there is another holomorphic gauge, $B$, defined in a neighbourhood of $\tilde{Y} \cup E$, which equals $B_0$ in a neighbourhood of $E$. Since $\pi : \tilde{M} \setminus E \to M \setminus \{y_1,y_2,\ldots\}$ is an isomorphism, away from $E$, $B$ is a pullback of a holomorphic gauge in a neighbourhood of $Y \setminus \{y_1,y_2,\ldots\}$. But near $E$, $B = B_0$ is, by construction, the pullback of a holomorphic gauge in a neighbourhood of $\{y_1,y_2,\ldots\}$. Therefore, $B$ is a pullback of a holomorphic gauge in a neighbourhood of $Y$.
\end{proof}

\subsection{Blowing down submanifolds}

In \cite{MR3894047} the authors extended the technique of blowing up submanifolds from symplectic and complex geometry to generalized complex geometry. One of the results in \cite{MR3894047} states that if $Y\subset (M,\J)$ is a generalized Poisson submanifold such that the Lie algebra structure on $N^\ast Y$ is degenerate\footnote{A lie algebra $\mfg$ is \emph{degenerate} if the map $\Lambda^3\mfg \rightarrow S^2\mfg$ given by $x\wedge y\wedge z\mapsto x[y,z]+y[z,x]+z[x,y]$ vanishes.}, then $Y$ can be blown up in a generalized complex manner. What this entails exactly is explained in \cite{MR3894047}, but roughly it means that there is a generalized complex manifold $(\widetilde{M},\widetilde{\J})$ together with a \emph{blow-down} map $p:\widetilde{M}\rightarrow M$, inducing a generalized complex isomorphism 
$$\widetilde{M}\backslash p^{-1}(Y) \cong M\backslash Y$$
with the property that the \emph{exceptional divisor} $\widetilde{Y}:=p^{-1}(Y)$ is isomorphic to $\mathbb{P}(NY)$ as a smooth fiber bundle over $Y$. Here $\mathbb{P}(NY)$ denotes the complex projectivization of the complex vector bundle $NY$. In particular, blowing up $Y$ in $M$ effectively replaces $Y$ with a codimension one submanifold $\widetilde{Y}$, within the category of generalized complex manifolds. Conversely we can ask whether a given codimension 1 submanifold $\widetilde{Y}\subset (\widetilde{M},\widetilde{\J})$ may be replaced by a submanifold of larger codimension. This notion is referred to as \emph{blowing-down}, and plays an important role for instance in the minimal model program in algebraic geometry (where the process is also referred to as \emph{contraction}). The rigorous definition of blowing down a submanifold as well as finding concrete examples is more difficult than for blowing up, but using our normal form results obtained thus far we can prove the following.

\begin{thm}\label{theo:blow-down}
Let $(\widetilde{M},\widetilde{\J})$ be a generalized complex manifold and $\widetilde{Y}\subset (\widetilde{M},\widetilde{\J})$ an Abelian Poisson brane. Suppose that $\widetilde{Y}$ is diffeomorphic to $\C\mathbb{P}^{n-1}$, that $N\widetilde{Y}\cong \mathcal{O}_{\C\mathbb{P}^{n-1}}(-1)$ as complex line bundles, and that the induced complex structure on $\widetilde{Y}$ coincides with the standard complex structure on $\C\mathbb{P}^{n-1}$. Then there exists a generalized complex manifold $(M,\J)$ and a point $y\in M$ such that $(\widetilde{M},\widetilde{\J})$ is the blow-up of $Y=\{y\}$ in $M$ with exceptional divisor $\widetilde{Y}$.   
\end{thm}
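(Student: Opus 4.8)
The plan is to reverse the blow-up construction of \cite{2016arXiv160202076B}: first put $\widetilde\J$ into a holomorphic gauge in a neighbourhood of $\widetilde Y$, then contract $\widetilde Y$ simultaneously as a complex manifold (Grauert) and as a holomorphic Poisson manifold (Hartogs plus Polishchuk), and finally check that the transported generalized complex structure extends across the contracted point.

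\textbf{Step 1 (holomorphic gauge).} I would begin by applying Theorem \ref{15:25:36} to $\widetilde Y\subset(\widetilde M,\widetilde\J)$. Its hypotheses hold: $\widetilde Y$ is an Abelian Poisson brane of complex codimension $1$, $N\widetilde Y\cong\mathcal{O}_{\C\mathbb{P}^{n-1}}(-1)$ is negative, and since $N^\ast\widetilde Y\cong\mathcal{O}_{\C\mathbb{P}^{n-1}}(1)$ the groups $H^1(\C\mathbb{P}^{n-1},\mathcal{O}(k))$ and $H^2(\C\mathbb{P}^{n-1},\mathcal{O}(k))$ vanish for all $k\geq 0$ by the standard cohomology of line bundles on projective space. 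This produces a neighbourhood $\widetilde V$ of $\widetilde Y$ with $H^{0,2}(\widetilde V)=0$ and an exact real two-form $\widetilde B$ with $e^{\widetilde B}\widetilde\J e^{-\widetilde B}=\J_{(\widetilde I,\widetilde\sigma)}$ for a holomorphic Poisson structure $(\widetilde I,\widetilde\sigma)$, inside which $\widetilde Y$ sits as $\C\mathbb{P}^{n-1}$ with normal bundle $\mathcal{O}(-1)$.

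\textbf{Steps 2--3 (contracting complex and Poisson data).} Since $N\widetilde Y=\mathcal{O}(-1)$ is negative, Grauert's contraction theorem yields a complex space $B$ and a proper holomorphic blow-down $\beta\colon(\widetilde V,\widetilde I)\to B$ collapsing $\widetilde Y$ to a single point $y$ and restricting to a biholomorphism $\widetilde V\setminus\widetilde Y\to B\setminus\{y\}$; because the normal bundle is exactly the tautological line bundle, the contracted point is smooth and $\beta$ realizes $\widetilde V$ as a neighbourhood of the exceptional divisor in the ordinary blow-up of $B$ at $y$. The bivector $\beta_\ast\widetilde\sigma$ is holomorphic on $B\setminus\{y\}$, and as $\operatorname{codim}\{y\}=n\geq 2$ it extends by Hartogs to a holomorphic Poisson structure $\sigma$ on $B$; the Abelian hypothesis forces $\sigma$ to vanish to second order at $y$, so $y$ is a degenerate (Abelian) Poisson point. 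By Polishchuk's result \cite{MR1465521} the Poisson blow-up of $(B,\sigma)$ at $y$ reproduces $(\widetilde I,\widetilde\sigma)$, since both holomorphic Poisson structures agree on the dense set $\widetilde V\setminus\widetilde Y$.

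\textbf{Step 4 (assembly and the main obstacle).} I would then form $M:=(\widetilde M\setminus\widetilde Y)\cup_\beta B$, glued along $\widetilde V\setminus\widetilde Y\cong B\setminus\{y\}$, with blow-down map $p\colon\widetilde M\to M$ equal to the identity away from $\widetilde Y$ and to $\beta$ on $\widetilde V$. On $M\setminus\{y\}$ the structure is forced to be $\widetilde\J$ transported through $p$ (so that $p$ is generalized holomorphic), and the entire theorem reduces to extending this across $y$. This is the hard part: on $B\setminus\{y\}$ the transported structure is $e^{\beta_\ast\widetilde B}\J_{(I_B,\sigma)}e^{-\beta_\ast\widetilde B}$, where $I_B$ is the complex structure of $B$, and the naive pushforward $\beta_\ast\widetilde B$ is singular at $y$ because $\beta$ collapses $\widetilde Y$. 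The resolution I would pursue is to show that the gauge near $\widetilde Y$ can be chosen as a pullback $\beta^\ast B_0$ of a smooth two-form $B_0$ on $B$: since $H^{0,2}(\widetilde V)=0$ the class of the $(0,2)$-part of $\widetilde B$ vanishes and can be written $\delbar$-exactly, and the interpolation machinery of Lemma \ref{flow equals gauge} and Proposition \ref{gluing prop} (moving between holomorphic gauges by $P$-Hamiltonian flows and real closed $(1,1)$-corrections, at worst altering $(\widetilde I,\widetilde\sigma)$ by a harmless Hamiltonian flow) should adjust $\widetilde B$ within its gauge class to one whose restriction to $\widetilde Y$ vanishes and which descends through $\beta$. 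With such a gauge, $\J:=e^{B_0}\J_{(I_B,\sigma)}e^{-B_0}$ on $B$ glues with $\widetilde\J$ on $M\setminus\{y\}$ to a global generalized complex structure, and comparing with the construction of \cite{2016arXiv160202076B} (blow up $B$ complex-analytically, pull back $\sigma$ by Polishchuk and $B_0$ as a smooth form) shows that blowing up $(M,\J)$ at $y$ returns exactly $(\widetilde M,\widetilde\J)$ with exceptional divisor $\widetilde Y$ and generalized holomorphic $p$. The principal difficulty is precisely this gauge descent through the contraction, and it is here that the vanishing of $H^{0,2}(\widetilde V)$ and the gauge-interpolation lemmas are indispensable.
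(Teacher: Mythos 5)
Your Steps 1--3 reproduce the paper's argument: apply Theorem \ref{15:25:36} (the cohomological hypotheses hold since $N^\ast\widetilde Y^{\otimes k}\cong\mathcal{O}(k)$), use Grauert (the paper cites \cite[Satz 7]{MR0137127}) to identify a neighbourhood of $\widetilde Y$ with a neighbourhood of the zero section in $\mathcal{O}_{\C\mathbb{P}^{n-1}}(-1)$ and contract it complex-analytically, and use Polishchuk \cite{MR1465521} to descend the Poisson bivector. Up to there you are on the paper's track.

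The problem is Step 4. The ``principal difficulty'' you identify --- that the holomorphic gauge $\widetilde B$ must descend through the contraction $\beta$ --- is self-inflicted, and your proposed resolution does not close the gap you opened. The two-form produced by Theorem \ref{15:25:36} is \emph{exact}, $\widetilde B=d\xi$, so one cuts off $\xi$ by a bump function supported near $\widetilde Y$ and equal to $1$ on a smaller neighbourhood, obtaining a globally defined exact two-form on all of $\widetilde M$. Applying this global $B$-field transform replaces $(\widetilde M,\widetilde\J)$ by an \emph{isomorphic} generalized complex manifold which, near $\widetilde Y$, is honestly holomorphic Poisson with no residual gauge. One then blows down that structure directly; since ``being the blow-up'' is a statement up to isomorphism, nothing needs to be pushed forward through $\beta$ except the holomorphic Poisson data, which Grauert and Polishchuk handle. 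This is exactly what the paper does when it says ``we can apply a global $B$-field transform to $(\widetilde M,\widetilde\J)$.'' By contrast, your fix --- choosing the gauge to be a pullback $\beta^\ast B_0$ --- asks for the two-form to be basic for the contraction, i.e.\ to annihilate all directions tangent to $\widetilde Y$ and be invariant along it; the interpolation results you cite (Lemma \ref{flow equals gauge}, Proposition \ref{gluing prop}) interpolate between holomorphic gauges and glue them near isolated points of complex type, and give no control of this kind over an entire divisor. As written, that step would not go through, whereas the global-transform route makes it unnecessary. (A minor further remark: the claim that Abelianity forces $\sigma$ to vanish to second order at $y$ is not needed and is not actually used anywhere in your argument.)
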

\begin{proof}
By the classification of holomorphic line bundles on $\C\mathbb{P}^{n-1}$ we know that $N\widetilde{Y}\cong \mathcal{O}_{\C\mathbb{P}^{n-1}}(-1)$ as holomorphic line bundles and hence that $H^1(\widetilde{Y},N\widetilde{Y}^{\otimes k})=H^2(\widetilde{Y},N\widetilde{Y}^{\otimes k})=0$ for all $k\geq 0$. By Theorem \ref{15:25:36} we can apply a global $B$-field transform to $(\widetilde{M},\widetilde{\J})$ after which a neighbourhood of $\widetilde{Y}$ is given by a holomorphic Poisson structure. According to \cite[Satz 7 (page 363)]{MR0137127} a neighbourhood of $Y$ is isomorphic, as a complex manifold, to a neighbourhood of the zero section in $\mathcal{O}_{\C\mathbb{P}^{n-1}}(-1)$. This neighbourhood can therefore be blown down, as a complex manifold, reducing $Y$ to a point. Finally we observe that \cite[Proposition 8.4]{MR1465521} implies that the Poisson bivector descends to a unique Poisson structure on the blow-down. Since the generalized complex structure is given by the holomorphic Poisson structure around $Y$, the blow-down is one of generalized complex manifolds as well.
\end{proof}
\begin{rem}
Basically we used the neighbourhood theorem Theorem \ref{16:02:38} to reduce the blowing down problem to the category of complex manifolds, and this strategy also applies if we want to blow down to something different than a point. Note that the condition on the complex structure on $\C\mathbb{P}^{n-1}$ is non-trivial for $n\geq 3$ as it is unknown in general whether the complex structure on projective space is unique.
\end{rem}
\begin{cor}
Let $Y$ be a real two-dimensional surface in the complex locus of a four-dimensional generalized complex manifold, diffeomorphic to $\C\mathbb{P}^1$ and with self-intersection $-1$. Then $Y$ can be blown down to a point.
\end{cor}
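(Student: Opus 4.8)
The plan is to recognize this statement as the $n=2$ specialization of Theorem \ref{theo:blow-down} and to verify that each of its hypotheses is automatically fulfilled by a surface $Y$ sitting in the complex locus of a four-dimensional generalized complex manifold $(\widetilde M,\widetilde\J)$. So the whole task reduces to checking three things: that $Y$ is an Abelian Poisson brane, that $NY\cong\mathcal{O}_{\C\mathbb{P}^1}(-1)$ as complex line bundles, and that the induced complex structure on $Y$ is the standard one on $\C\mathbb{P}^1$. Once these are in place, Theorem \ref{theo:blow-down} produces the generalized complex manifold $(M,\J)$ and the point $y$ with respect to which $\widetilde M$ is the blow-up of $Y=\{y\}$, which is exactly the assertion that $Y$ can be blown down to a point.

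First I would establish the brane-theoretic hypothesis, which is where the real content lies. Since $Y$ lies in the complex locus, I would invoke Theorem \ref{4d result} (applied in a neighbourhood of $Y$): there $\widetilde\J$ is $B$-field equivalent to a holomorphic Poisson structure $(I,\sigma)$, in which $Y$ appears as a smooth complex curve along which the Poisson bivector $P=-4\operatorname{Im}(\sigma)$ vanishes, being the degeneracy/type-change locus. In this model $N^*Y$ is $I^*$-invariant, as the conormal bundle of a complex submanifold, and since $P|_Y=0$ the block form of $\J_{(I,\sigma)}$ gives $\J(0+\xi)=P\xi+I^*\xi=I^*\xi\in N^*Y$ for $\xi\in N^*Y$; as $B$-field transforms fix $T^*M$ pointwise and leave the bivector $\pi_\J$ unchanged, this conclusion transfers back to $\widetilde\J$, so $\widetilde\J N^*Y=N^*Y$ and $Y$ is a generalized Poisson submanifold in the sense of Definition \ref{GPS}. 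Because $Y$ has complex codimension one, $N^{*1,0}Y$ has complex rank one and is therefore a bundle of one-dimensional, hence Abelian, Lie algebras, so the condition of Definition \ref{APB} holds automatically. Finally, the induced structure $\J_Y$ is of complex type in one complex dimension and is therefore automatically of block-diagonal form, i.e.\ already in a holomorphic gauge; exactly as in the proof of Theorem \ref{4d result}, the corresponding splitting is $\widetilde\J$-invariant and endows $Y$ with a brane structure. Together these make $Y$ an Abelian Poisson brane.

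The remaining two hypotheses are topological and complex-analytic bookkeeping. The normal bundle $NY\to Y\cong S^2$ is a complex line bundle, hence classified up to isomorphism by its first Chern number, which equals the self-intersection $[Y]\cdot[Y]$; the hypothesis that this equals $-1$ thus yields $NY\cong\mathcal{O}_{\C\mathbb{P}^1}(-1)$ as complex line bundles. For the complex structure, I would use that every complex structure on the two-sphere is biholomorphic to the standard $\C\mathbb{P}^1$, so the complex structure induced on $Y$ by the brane structure agrees with the standard one; this is precisely the case ($n=2$) in which the delicate uniqueness-of-complex-structure hypothesis of Theorem \ref{theo:blow-down} is free, as noted in the remark following that theorem. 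With all four hypotheses of Theorem \ref{theo:blow-down} verified for $n=2$, the theorem applies and yields the blow-down.

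The hard part will be the verification that $Y$ really is an Abelian Poisson brane, in particular the identities $\widetilde\J N^*Y=N^*Y$ and the existence of the brane subbundle: this is the one step that is not purely formal, since it must upgrade the a priori only set-theoretic information that $Y$ lies in the complex locus to the rigid statement that, in a suitable holomorphic gauge, $Y$ is the divisor on which the holomorphic Poisson bivector vanishes. This is exactly what the four-dimensional normal-form result Theorem \ref{4d result} supplies, and everything downstream is then a direct application of the already-proved Theorem \ref{theo:blow-down}.
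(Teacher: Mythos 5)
The paper offers no explicit proof of this corollary: it is presented as the $n=2$ instance of Theorem \ref{theo:blow-down}, with the extra hypotheses of that theorem (Abelian Poisson brane structure, $NY\cong\mathcal{O}_{\C\mathbb{P}^1}(-1)$, standardness of the complex structure) understood to be automatic in four dimensions via the machinery of Section \ref{4d section}. Your proposal carries out exactly this verification --- codimension one gives Abelianity, the rank-one normal bundle and one-complex-dimensional holomorphic gauge give the brane structure, $c_1=[Y]\cdot[Y]=-1$ identifies the normal bundle, and uniqueness of the complex structure on $S^2$ handles the last hypothesis --- so it is correct and follows the paper's intended route.
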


\section{Proof of the main results}\label{Proof}

\subsection{Leibniz bounds}\label{sec:leibnizbounds}

In order to work efficiently with the estimates in this paper we will introduce some notation, taken from \cite{MR3128977}. Let $E$ and $F$ be vector bundles over $M$ with sections $u\in C^\infty(E)$ and $v\in C^\infty(F)$. For integers $k,l\geq0$ we write
\begin{align}\label{16:18:42}
\mcL(|u|_k;||v||_l):=Poly(|u|_{\floor{k/2}+1 })\cdot ||v||_l+Poly(|u|_{\floor{k/2}+1 })\cdot |u|_{k}\cdot ||v||_{\floor{l/2}+1}. 
\end{align}
Here $Poly$ denotes a polynomial with non-negative coefficients that depends only on $k$, $l$ and the $C^0$-norm of $u$. Furthermore, $\floor{x}$ denotes the biggest integer bounded by $x$ and $|\cdot|$ and $||\cdot||$ denote the $C^k$-norms and Sobolev norms, respectively, defined with respect to some auxiliary metrics on $M$, $E$ and $F$. We think of $\mcL(|u|_k;||v||_l)$ as a polynomial that is linear in $v$ and for which every monomial contains at most one high derivative-norm (of order $k$ or $l$), the rest being low derivative norms (of order $k/2$ or $l/2$). The letter $\mcL$ stands for Leibniz, on account of the estimate
$$||uv||_k=\sum_{|\alpha|\leq k} ||\del^\alpha (uv)||\leq \sum_{|\alpha|\leq k}\sum_{\beta+\gamma=\alpha} |\del^\beta u|\cdot || \del^\gamma v||\leq \mcL(|u|_k;||v||_k)$$
for functions $u,v\in C^\infty(M)$. Here are some more examples to illustrate how we will use this notation.
\begin{ex}\label{16:37:47}
Let $\ve_2\in \Omega^{0,1}(T^{1,0}M)$ denote a deformation of an almost complex structure $I$ and let $\varphi\in \Omega^{0,q}(M)$. Then we have
$$||\delbar_{\ve_2}\varphi||_k=||\delbar\varphi+\lb\ve_2,\varphi\rb||_k\leq \mcL(|\ve_2|_{k+1};||\varphi||_{k+1}). $$
Similarly, if $\Delta_{\ve_2}=\delbar_{\ve_2}^\ast\delbar_{\ve_2}+\delbar_{\ve_2}\delbar_{\ve_2}^\ast$ denotes the Laplacian, then 
\begin{align}\label{16:58:22}
||\Delta_{\ve_2}\varphi||_k\leq \mcL(|\ve_2|_{k+2};||\varphi||_{k+2}). 
\end{align}
 Note that in this example the left-hand side is linear in $\varphi$ but not in $\ve_2$. 
\end{ex}
\begin{rem}
When dealing with estimates like \eqref{16:58:22} we will often replace $|\ve_2|_{k+2}$ and $||\varphi||_{k+2}$ by $|\ve_2|_{k+b}$ and $||\varphi||_{k+b}$ where $b$ is some fixed large integer. The important thing is that $b$ is fixed and independent of $k$.
\end{rem} 
We will also make use of $\mcL(|u|_k;|v|_k)$, which is given by the same expression as \eqref{16:18:42} but with $||v||$ replaced by $|v|$. 
\begin{ex} Combined with the Sobolev inequality \eqref{16:58:22} yields the estimate
\begin{align*}
|\Delta_{\ve_2}\varphi|_k\leq \mcL(|\ve_2|_{k+b};|\varphi|_{k+b}),
\end{align*}
for some integer $b$ (which incorporates the dimension of $M$).
\end{ex}

\subsection{Proof of the rigidity theorem}\label{15:39:45}

This subsection is devoted to the proof of Theorem \ref{14:11:52}, which is stated again below but more quantitatively. In what follows we will fix auxiliary Hermitian metrics on our manifolds and vector bundles. We will also use the terminology of Section \ref{14:20:32}, in particular we will use the action of two forms on deformations and the decomposition (\ref{14:30:49}) for generalized complex deformations of a complex structure. 

\begin{thm}\label{14:17:27}
Let $(M,I,\sigma)$ be a compact holomorphic Poisson manifold with boundary such that $(M,I)$ is $2$- and $3$-convex and for which $H^{0,2}(M,I)=0$. Then there exists an integer $a\in \mathbb{Z}_{>0}$ and a real constant $\delta>0$ with the following property. For any generalized complex deformation $\ve$
of $(I,\sigma)$ that satisfies 
\begin{align}\label{16:55:42}
|\ve|_a<\delta,
\end{align}
there exists a one-form $\xi\in \Omega^1(M)$ with the property that $(e^{d\xi}\cdot \ve)_3=0$.
\end{thm}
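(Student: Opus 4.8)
The plan is to solve the nonlinear first-order equation $(e^{d\xi}\cdot\ve)_3=0$ for the real one-form $\xi$ by a Newton-type iteration, regularised in the Nash--Moser fashion to absorb the loss of derivatives coming from the Neumann operator. Using Lemma~\ref{17:03:57} with $B=d\xi$, the quantity $(e^{d\xi}\cdot\ve)_3$ is the nonlinear expression \eqref{17:36:15}, whose linearisation in $\xi$ is recorded in \eqref{13:21:37} and \eqref{12:30:54}: up to terms that are small when $\ve$ is small, it equals $\delbar_{\ve_2}\big(\xi^{0,1}-\ve_2(\xi^{1,0})\big)$. I would therefore build a sequence of one-forms $\xi_n$ and deformations $\ve^{(n)}$ with $\ve^{(0)}=\ve$ and $\ve^{(n+1)}=e^{d\xi_n}\cdot\ve^{(n)}$. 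Since $B$-field transforms compose additively, $e^{B_1}e^{B_2}=e^{B_1+B_2}$, one has $\ve^{(n)}=e^{d(\xi_0+\dots+\xi_{n-1})}\cdot\ve$, so if the $\xi_n$ sum to a smooth $\xi$ and the components $\ve^{(n)}_3$ tend to zero, then $(e^{d\xi}\cdot\ve)_3=0$ as required, with the total gauge automatically exact.

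The linearised solve at each step rests on the Hodge theory of Section~\ref{sec:hodge}. Because $e^{d\xi_n}$ is a symmetry of $\T M$, each $\ve^{(n)}$ is again a Maurer--Cartan element, so by the degree-$(0,3)$ equation \eqref{1:07:34} the residual $\ve^{(n)}_3$ is $\delbar_{\ve^{(n)}_2}$-closed. The hypotheses that $(M,I)$ is $2$- and $3$-convex and that $H^{0,2}(M,I)=0$ guarantee, via Theorem~\ref{15:35:57}(4), that for $\ve^{(n)}_2$ in a fixed neighbourhood of zero the harmonic spaces $\mcH^{0,2}_{\ve^{(n)}_2}$ vanish and that $N_{\ve^{(n)}_2}$ obeys the uniform bound \eqref{15:53:51}. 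The $3$-convexity makes the degree-$(0,3)$ Hodge theory available, which is needed to ensure that $\eta_n:=\delbar^\ast_{\ve^{(n)}_2}N_{\ve^{(n)}_2}\ve^{(n)}_3$ actually solves $\delbar_{\ve^{(n)}_2}\eta_n=\ve^{(n)}_3$: the spurious term $\delbar^\ast_{\ve^{(n)}_2}\delbar_{\ve^{(n)}_2}N_{\ve^{(n)}_2}\ve^{(n)}_3$ drops out because $\delbar_{\ve^{(n)}_2}N_{\ve^{(n)}_2}\ve^{(n)}_3$ is harmonic in degree $(0,3)$. Inverting the small real-linear perturbation $\zeta\mapsto\zeta^{0,1}-\ve^{(n)}_2(\zeta^{1,0})$ then produces a one-form $\zeta_n$ whose linearised effect cancels $\ve^{(n)}_3$, and \eqref{15:53:51} yields the tame estimate $|\zeta_n|_k\le\mcL(|\ve^{(n)}_2|_{k+b};|\ve^{(n)}_3|_{k+b})$ with a fixed loss $b$.

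The reason the implicit function theorem cannot be applied directly---and the heart of the matter---is that this inverse gains no derivatives: $N_{\ve_2}$ recovers only one derivative while $\delbar^\ast_{\ve_2}$ costs one, so the linearised inverse satisfies only a tame estimate with the fixed loss $b$. I would therefore insert smoothing operators $S_{t_n}$, setting $\xi_n:=S_{t_n}\zeta_n$ with $t_n$ growing geometrically, exactly as in the Nash--Moser schemes of \cite{MR656198,MR3128977}. With this choice $\ve^{(n+1)}_3=(e^{d\xi_n}\cdot\ve^{(n)})_3$ splits into a genuinely quadratic error in $d\xi_n$ (from the nonlinearity of \eqref{17:36:15}) and a smoothing error of the schematic form $\delbar_{\ve^{(n)}_2}\big((1-S_{t_n})\zeta_n\big)$, both estimated by the Leibniz bounds of Section~\ref{sec:leibnizbounds}. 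The argument then proceeds by the standard Nash--Moser bookkeeping: fixing a target index $a$ and a reserve of derivatives, one proves inductively that $|\ve^{(n)}_3|$ decays super-exponentially in $n$ while the high-order norms $|\ve^{(n)}|$ grow at most slowly, which simultaneously keeps $\ve^{(n)}_2$ inside the range where Theorem~\ref{15:35:57} applies and forces $\sum_n\xi_n$ to converge in every $C^k$-norm; the limit $\xi$ is smooth and satisfies $(e^{d\xi}\cdot\ve)_3=0$.

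The main obstacle is thus not any single estimate but the interlocking bookkeeping of the iteration: one must tune the smoothing rate and the scale of norms so that the quadratic convergence of Newton's method overcomes the fixed derivative loss $b$, while never allowing $\ve^{(n)}_2$ to leave the neighbourhood in which the uniform Neumann bounds \eqref{15:53:51} hold. This is precisely the point where the convexity hypotheses enter, through Theorem~\ref{15:35:57}, and where the present proof must depart from the compact boundaryless argument of Theorem~\ref{15:13:30}.
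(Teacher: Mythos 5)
Your overall architecture is the paper's: an iteration $\ve^{(n+1)}=e^{d\xi_n}\cdot\ve^{(n)}$ with $\xi_n=S_{t_n}\zeta_n$, where $\zeta_n$ is the real one-form whose $I_{\ve^{(n)}_2}$-$(0,1)$-part is $-\delbar^\ast_{\ve^{(n)}_2}N_{\ve^{(n)}_2}\ve^{(n)}_3$, smoothing operators compensating the fixed derivative loss, and the standard Nash--Moser bookkeeping. However, there is a genuine gap at the central analytic step. You assert that $\eta_n:=\delbar^\ast_{\ve^{(n)}_2}N_{\ve^{(n)}_2}\ve^{(n)}_3$ solves $\delbar_{\ve^{(n)}_2}\eta_n=\ve^{(n)}_3$ exactly, because ``the spurious term $\delbar^\ast_{\ve^{(n)}_2}\delbar_{\ve^{(n)}_2}N_{\ve^{(n)}_2}\ve^{(n)}_3$ drops out since $\delbar_{\ve^{(n)}_2}N_{\ve^{(n)}_2}\ve^{(n)}_3$ is harmonic in degree $(0,3)$.'' This is false for two independent reasons: first, $\ve^{(n)}_2$ is not an integrable deformation of $I$, so $\delbar_{\ve^{(n)}_2}^2=-\lb\lb\sigma+\ve^{(n)}_1,\ve^{(n)}_3\rb,\cdot\rb\neq 0$ and the identity $\delbar_{\ve_2}N_{\ve_2}=N_{\ve_2}\delbar_{\ve_2}$ one would want to invoke has no analogue; second, on a manifold with boundary the Neumann operator does not commute with $\delbar_{\ve_2}$ even in the integrable case, because $N_{\ve_2}\Delta_{\ve_2}\varphi=(1-\pi_{\ve_2})\varphi$ only holds for $\varphi$ satisfying the two Neumann boundary conditions of \eqref{08:58:09}, which $\delbar_{\ve_2}N_{\ve_2}\ve_3$ need not satisfy. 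Moreover nothing is assumed about $H^{0,3}$, so harmonicity in degree $(0,3)$ would not be available even if it were relevant.

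Consequently your residual $\ve^{(n+1)}_3$ contains a third error term, $\delbar^\ast_{\ve^{(n)}_2}\delbar_{\ve^{(n)}_2}N_{\ve^{(n)}_2}\ve^{(n)}_3$, which is neither quadratic in $d\xi_n$ nor a smoothing error, and without controlling it the Newton iteration does not converge quadratically. This term is exactly the subject of the paper's Lemma \ref{14:51:45}, which is the hardest part of the proof and the true reason for the $3$-convexity hypothesis: one sets $\varphi=\delbar_{\ve_2}N_{\ve_2}\ve_3\in\Omega^{0,3}(M)$, corrects it to $\widetilde\varphi=\varphi-r\alpha$ so that it lies in $\operatorname{Dom}(\square_{\ve_2})$ in degree $3$ (the correction is needed precisely because $\delbar_{\ve_2}^2\neq 0$ spoils the second Neumann boundary condition), and then computes $\Delta_{\ve_2}\varphi=[\Delta_{\ve_2},\delbar_{\ve_2}]N_{\ve_2}\ve_3$, which by the Maurer--Cartan equation \eqref{12:37:22} is built from $\lb\lb\sigma+\ve_1,\ve_3\rb,\cdot\rb$ and is therefore linear in $\ve_3$; combined with the linear dependence of $N_{\ve_2}\ve_3$ on $\ve_3$ this yields the quadratic bound \eqref{15:06:57}. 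You correctly sensed that $3$-convexity supplies degree-$(0,3)$ Hodge theory, but you attributed to it the wrong consequence; to complete your argument you must replace the claimed exact solvability by this quadratic estimate on the uncancelled term.
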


The proof of Theorem \ref{14:17:27} starts on page \pageref{16:29:09}. We will first sketch the overall strategy, provide some necessary lemmas and set up a Nash-Moser type algorithm that will be used in the proof.

\subsubsection*{Strategy} 

We will construct the one-form of Theorem \ref{14:17:27} as an infinite sum $\xi=\sum_{i=0}^\infty \xi^{(i)}$ using an iterative algorithm. The idea is to construct $\xi^{(0)}$ from $\ve^{(0)}:=\ve$ so that $\ve^{(1)}:=e^{d\xi^{(0)}}\cdot \ve^{(0)}$ satisfies\footnote{We will give precise meanings to these kind of inequalities later by using $C^k$-norms.} $\ve^{(1)}_3<\ve^{(0)}_3$. Then we construct $\xi^{(1)}$ from $\ve^{(1)}$ so that $\ve^{(2)}:=e^{d\xi^{(1)}}\cdot \ve^{(1)}$ satisfies $\ve^{(2)}_3<\ve^{(1)}_3$, and so on. Setting things up correctly, the sum $\xi=\sum_{i} \xi^{(i)}$ will converge in $C^\infty(T^\ast M)$ and satisfy
\begin{align*}
(e^{d\xi}\cdot \ve)_3=\lim_{n\rightarrow \infty} (e^{d\sum_{i=0}^n\xi^{(i)}}\cdot \ve)_3=\lim_{n\rightarrow \infty} \ve^{(n+1)}_3=0.
\end{align*} 
The construction of $\xi^{(i)}$ from $\ve^{(i)}$ is a bit involved, mainly because $\ve^{(i)}_2$ is not necessarily integrable as an almost complex deformation of $I$. Modulo some details though, $\xi^{(i)}$ will be given by a $\delbar_{\ve^{(i)}_2}$-primitive of $\ve^{(i)}_3$ (recall that $\delbar_{\ve^{(i)}_2}\ve^{(i)}_3=0$ because $\ve$ is integrable, c.f.\ \ref{1:07:34}). The fact that we can construct such primitives follows from the Hodge theory of Theorem \ref{15:35:57} combined with the assumptions that $(M,I)$ is $2$-convex and that $H^{0,2}(M,I)=0$. The reason that we also need $3$-convexity is because of a technicality in Lemma \ref{14:51:45}. 

The precise definition of $\xi^{(i)}$ will be given later, but there is one technical aspect in its definition that deserves to be mentioned here. To ensure that $\xi=\sum_{i} \xi^{(i)}$ converges we need suitable estimates on all the $C^k$-norms $|\xi^{(i)}|_k$. Since $\xi^{(i)}$ is constructed from $\ve^{(i)}_3$ using the Neumann operator associated to ${\ve^{(i)}_2}$, (\ref{15:53:51}) bounds the $C^k$-norms of $\xi^{(i)}$ by those of $\ve^{(i)}_2$ and $\ve^{(i)}_3$ and therefore by induction by those of $\ve_2^{(0)}$ and $\ve_3^{(0)}$. There is however a shift of the norm-degrees in these estimates which becomes bigger as $i$ increases,  preventing us from expressing $|\xi^{(i)}|_k$ directly in terms of the starting data $|\ve_2^{(0)}|_k$ and $|\ve_3^{(0)}|_k$. This phenomenon is sometimes called a \lq\lq loss of derivatives\rq\rq, because information about a certain $C^k$-norm of $\ve^{(0)}$ only provides information for a smaller $C^k$-norm of $\xi^{(i)}$. To overcome this issue we have to modify the elements $\xi^{(i)}$ with the help of so-called \textsl{smoothing operators}, which compensate the loss of derivatives at each iterative step. This technique was first introduced by Nash in \cite{MR75639}.

\subsubsection*{The necessary estimates}

Throughout this section we let $(M,I,\sigma)$ denote a fixed compact holomorphic Poisson manifold with boundary. Let 
\begin{align*}
\mcD&:=C^\infty(\Lambda^2 T^{1,0}M)\oplus C^\infty(T^{\ast 0,1}M\otimes T^{1,0}M )\oplus    C^\infty(\Lambda^2 T^{\ast 0,1}M),\\
\mcG &:=C^\infty(T^\ast M ),
\end{align*} 
be the spaces parametrizing generalized complex deformations of $(I,\sigma)$ and one-forms (thought of as exact $B$-field transformations) on $M$, respectively. Denote by $|\cdot |_k$ the $C^k$-norms on these Fr\'echet spaces. Note that for the Nash-Moser algorithm we only need these $C^k$-norms and not the Sobolev norms, although the latter will appear in intermediate estimates involving Hodge theory. As explained in Section \ref{14:20:32}, the group $\mcG$ acts on the set of deformations $\mcD$  
\begin{align*} 
(\xi,\ve)\mapsto e^{d\xi}\cdot\ve,
\end{align*}
which is well defined when $|\xi|_1$ is small compared to $|\sigma+\ve_1|_0$ (c.f.\ Remark \ref{11:43:33}). 

In order to construct the sequence of one-forms of the algorithm (c.f.\ the strategy above), we will use the map $\Phi: \mcD \rightarrow \mcG$ defined by
\begin{align}\label{14:14:54} 
\Phi(\ve):=-\delbar^\ast_{\ve_2}N_{\ve_2}\ve_3 -(1+\ve_2)(1-\overline{\ve_2}\ve_2)^{-1}(\overline{\delbar^\ast_{\ve_2}N_{\ve_2}\ve_3}+\overline{\ve_2}(\delbar^\ast_{\ve_2}N_{\ve_2}\ve_3)).
\end{align}
If $(M,I)$ is $2$-convex, Theorem \ref{15:35:57} implies that there exists an integer $a_0>0$ and a constant $\delta_0>0$ such that $\Phi(\ve)$ is well defined whenever $|\ve_2|_{a_0}<\delta_0$. 

\begin{rem}
The definition of $\Phi$ is chosen to ensure that $\Phi(\ve)$ is real and satisfies
\begin{align}\label{21:28:07}
\Phi(\ve)^{0,1}-\ve_2(\Phi(\ve)^{1,0})=-\delbar^\ast_{\ve_2}N_{\ve_2}\ve_3.
\end{align} 
The left-hand side equals the $(0,1)$-part of $\Phi(\ve)$ with respect to the deformed almost complex structure $I_{\ve_2}$, while the right-hand side will turn out to give an approximate $\delbar_{\ve_2}$-primitive of $\ve_3$ whenever $\delbar_{\ve_2}\ve_3=0$ and $H^{0,2}(M,I)=0$. 
\end{rem}
\begin{lem} Suppose that $(M,I)$ is $2$-convex and that $H^{0,2}(M,I)=0$. 
Then there exist $a_0\in \mathbb{Z}_{>0}$ and $\delta_0>0$ with the following property. For all $\ve\in \mcD$ satisfying $|\ve|_{a_0} <\delta_0$ 
we have (abbreviating $\Phi:=\Phi(\ve)$)
\begin{align}\label{14:48:01}
(e^{d\Phi}\cdot\ve)_3=&\delbar^\ast_{\ve_2}\delbar_{\ve_2}N_{\ve_2}\ve_3-\lb \ve_3,(\sigma+\ve_1)(\Phi^{1,0})\rb \nonumber \\&+ \big(\ve_2d\Phi^{2,0}-d\Phi^{1,1}\big)(\sigma+\ve_1)\big(1+d\Phi^{2,0}(\sigma+\ve_1)\big)^{-1}\big(d\Phi^{1,1}+d\Phi^{2,0}\ve_2\big).
\end{align}
\end{lem}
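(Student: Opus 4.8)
The plan is to start from the explicit formula \eqref{17:36:15} for the action of a closed real two-form on a deformation, specialized to the exact two-form $B=d\Phi$. Since $\Phi=\Phi(\ve)$ is real (as noted in the remark preceding the lemma) and the smallness hypothesis forces $d\Phi$ to be small, the endomorphism $1+d\Phi^{2,0}(\sigma+\ve_1)$ is invertible (cf.\ Remark \ref{11:43:33}) and \eqref{17:36:15} applies with $B^{p,q}=d\Phi^{p,q}$. This immediately gives
\begin{align*}
(e^{d\Phi}\cdot\ve)_3=\ve_3+d\Phi^{0,2}+d\Phi^{1,1}\ve_2-\big(\ve_2+d\Phi^{1,1}(\sigma+\ve_1)\big)\big(1+d\Phi^{2,0}(\sigma+\ve_1)\big)^{-1}\big(d\Phi^{1,1}+d\Phi^{2,0}\ve_2\big),
\end{align*}
and the whole proof is then a matter of rewriting this right-hand side into the claimed form.

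The first step is purely algebraic. Abbreviating $A=d\Phi^{1,1}$, $C=d\Phi^{2,0}$, $s=\sigma+\ve_1$ and $R=(1+Cs)^{-1}$, the rational term is $-(\ve_2+As)R(A+C\ve_2)$. I would use the identity $-(\ve_2+As)=(\ve_2C-A)s-\ve_2(1+Cs)$ together with $(1+Cs)R=1$ to split it as
\begin{align*}
-(\ve_2+As)R(A+C\ve_2)=(\ve_2C-A)s\,R\,(A+C\ve_2)-\ve_2(A+C\ve_2).
\end{align*}
The first summand is exactly the last line of \eqref{14:48:01}, so it only remains to collect the leftover low-order terms
\begin{align*}
d\Phi^{0,2}+d\Phi^{1,1}\ve_2-\ve_2\big(d\Phi^{1,1}+d\Phi^{2,0}\ve_2\big).
\end{align*}
Here one must track the endomorphism interpretations of the various components carefully (as in Remark \ref{11:43:33}); this bookkeeping is the only genuinely fiddly part of the argument.

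The key observation is that this leftover is precisely the left-hand side of \eqref{12:30:54} from Remark \ref{20:13:41}, applied with $\xi=\Phi$. Substituting \eqref{12:30:54} replaces it by $\delbar_{\ve_2}\big(\Phi^{0,1}-\ve_2(\Phi^{1,0})\big)-\lb\ve_3,(\sigma+\ve_1)(\Phi^{1,0})\rb$, and the bracket term already matches the corresponding term of \eqref{14:48:01}. Next I would invoke the defining property \eqref{21:28:07} of $\Phi$, namely $\Phi^{0,1}-\ve_2(\Phi^{1,0})=-\delbar^\ast_{\ve_2}N_{\ve_2}\ve_3$, which turns the remaining piece into $\ve_3-\delbar_{\ve_2}\delbar^\ast_{\ve_2}N_{\ve_2}\ve_3$.

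The final step is Hodge theory. Because $(M,I)$ is $2$-convex and $H^{0,2}(M,I)=0$, part 4 of Theorem \ref{15:35:57} gives $\mcH_{\ve_2}=0$ for $\ve_2$ small, so $\pi_{\ve_2}=0$, and then part 3 yields $\ve_3=\Delta_{\ve_2}N_{\ve_2}\ve_3=\delbar_{\ve_2}\delbar^\ast_{\ve_2}N_{\ve_2}\ve_3+\delbar^\ast_{\ve_2}\delbar_{\ve_2}N_{\ve_2}\ve_3$ (where $N_{\ve_2}\ve_3$ lies in the domain on which $\square_{\ve_2}$ agrees with $\Delta_{\ve_2}$). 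Consequently $\ve_3-\delbar_{\ve_2}\delbar^\ast_{\ve_2}N_{\ve_2}\ve_3=\delbar^\ast_{\ve_2}\delbar_{\ve_2}N_{\ve_2}\ve_3$, which is exactly the first term of \eqref{14:48:01}, completing the derivation. I expect the main obstacle to be not conceptual but organizational: carrying the component-wise endomorphism conventions of \eqref{17:36:15} consistently through the algebraic split, so that the leftover terms land on the nose as the left-hand side of \eqref{12:30:54} and the two Hodge-theoretic summands assemble correctly.
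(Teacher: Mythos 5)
Your argument is correct and is essentially the paper's own proof: both combine the explicit $B$-field action \eqref{17:36:15} with the identity \eqref{12:30:54}, the defining property \eqref{21:28:07} of $\Phi$, and the Hodge decomposition $\ve_3=\Delta_{\ve_2}N_{\ve_2}\ve_3$ (using $\mcH_{\ve_2}=0$ from $2$-convexity and $H^{0,2}=0$). The only difference is that you spell out the algebraic splitting of the rational term, which the paper leaves implicit, and your identity $-(\ve_2+As)=(\ve_2C-A)s-\ve_2(1+Cs)$ checks out.
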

\begin{proof} Because of 2-convexity we know that $\Phi(\ve)\in \mcG$ is well-defined for $|\ve_2|_{a_0}<\delta_0$ when $a_0$ is sufficiently large and $\delta_0$ sufficiently small. We may assume that $\delta_0<1$ so that $|\sigma+\ve_1|_0\leq |\sigma|_0+\delta_0<|\sigma|_0+1$, and because $\Phi:\mcD\rightarrow \mcG$ is continuous we may enlarge $a_0$ and shrink $\delta_0$ (depending on $|\sigma|_0$) so that $e^{d\Phi}\cdot\ve\in \mcD$ is well-defined whenever $|\ve|_{a_0}<\delta_0$ (c.f.\ Remark \ref{11:43:33}). 

Since $H^{0,2}(M,I)=0$, Theorem \ref{15:35:57} 4) implies that $\mathcal{H}_{\ve_2}^{0,2}=\text{Ker}(\Delta_{\ve_2})=0$ for $|\ve_2|_{a_0}<\delta_0$ (after possibly enlarging $a_0$ and shrinking $\delta_0$). Consequently, using (\ref{21:28:07}) we deduce that
\begin{align*}
\delbar_{\ve_2}(\Phi^{0,1}-\ve_2(\Phi^{1,0}))=-\delbar_{\ve_2}\delbar^\ast_{\ve_2}N_{\ve_2}\ve_3=-\Delta_{\ve_2}N_{\ve_2}\ve_3+\delbar^\ast_{\ve_2}\delbar_{\ve_2}N_{\ve_2}\ve_3=-\ve_3+\delbar^\ast_{\ve_2}\delbar_{\ve_2}N_{\ve_2}\ve_3.
\end{align*}
This equation, combined with \eqref{17:36:15} and \eqref{12:30:54}, implies \eqref{14:48:01}.
\end{proof}
We will think of $\Phi(\ve)$ as a one-form designed to make sure that $(e^{d\Phi(\ve)}\cdot\ve)_3$ is smaller than $\ve_3$. Eventually we will achieve this by using that the right hand side of (\ref{14:48:01}) is quadratic in $\ve_3$. Since $\Phi=\Phi(\ve)$ depends linearly on $\ve_3$ we see that this is indeed true for all terms in (\ref{14:48:01}), except possibly for the first term. We will now set out to prove that this first term can be bounded by something that is quadratic in $\ve_3$ (Lemma \ref{14:51:45} below). We will make use of the following estimates, whose proofs can be found in \cite{MR656198}.

\begin{prop}[\cite{MR656198}]\label{13:43:23}
Let $M$ be a compact manifold with boundary. 
\newline
1) For every triple of integers $0\leq k\leq l \leq m$ we have
\begin{align}
|f|_l\leq & C |f|^{\frac{m-l}{m-k}}_k |f|^{\frac{l-k}{m-k}}_m  &&\forall f\in C^\infty(M). \label{20:45:37}
\end{align}
2) For every integer $k\geq 0$ we have
\begin{align}
|fg|_k\leq & C (|f|_k|g|_0+|f|_0|g|_k) && \forall f,g\in C^\infty(M). \label{17:19:25}
\end{align} 
3) If $(i,j)$ lies on the line segment joining $(k,l)$ and $(m,n)$, we have 
\begin{align}
|f|_i|g|_j\leq & C (|f|_k|g|_l+|f|_m|g|_n) && \forall f,g\in C^\infty(M). \label{17:29:45}
\end{align} 
(In all three cases $C$ denotes a constant that depends only on $i,j,k,l,m$) 
\end{prop}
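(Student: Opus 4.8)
The plan is to deduce all three estimates from the single interpolation inequality \eqref{20:45:37}, which is the only one requiring genuine analytic input; the product estimates \eqref{17:19:25} and \eqref{17:29:45} are then formal consequences of \eqref{20:45:37} together with the Leibniz rule and Young's inequality.

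First I would prove \eqref{20:45:37}. Writing $l=\theta k+(1-\theta)m$ with $\theta=(m-l)/(m-k)$, the inequality says exactly that the sequence $j\mapsto \log|f|_j$ is convex up to an additive constant $\log C$, namely $\log|f|_l\leq \theta\log|f|_k+(1-\theta)\log|f|_m+\log C$. Since convexity of a sequence is equivalent to its midpoint form on consecutive triples, it suffices to establish the base case of a single intermediate derivative,
\begin{equation}
|f|_{j+1}^2\leq C\,|f|_j\,|f|_{j+2},\qquad j\geq 0.
\end{equation}
Iterating this three-term inequality and summing the resulting logarithmic bounds produces \eqref{20:45:37} with a constant depending only on $k,l,m$.

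The three-term inequality is the classical Landau--Kolmogorov estimate, which I would prove locally. In a coordinate chart, Taylor's theorem in a single coordinate direction gives, for $g=\partial^\alpha f$ with $|\alpha|=j$ and any step $h>0$, a pointwise bound of the form $|\partial_i g|\leq (2/h)|g|_0+(h/2)|g|_2$; optimizing over $h>0$ yields $|\partial_i g|\leq C(|g|_0|g|_2)^{1/2}$. Taking the supremum over $i$ and over all $\alpha$ with $|\alpha|=j$ (and noting that the lower-order derivatives contributing to $|f|_{j+1}$ are trivially dominated by $|f|_j\leq (|f|_j|f|_{j+2})^{1/2}$) gives the claimed base case. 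The main obstacle is to carry this out uniformly on a compact manifold with boundary: near $\partial M$ only one-sided Taylor expansions are available and the charts must be chosen compatibly. I would handle this by covering $M$ with finitely many coordinate charts (half-space charts near $\partial M$), running the one-dimensional argument with one-sided expansions in the boundary charts, and patching the local estimates with a fixed partition of unity; compactness of $M$ then makes the constant uniform. Alternatively one may fix a tame extension operator to a slightly larger boundaryless manifold and invoke the interior estimate there.

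Finally I would deduce \eqref{17:19:25} and \eqref{17:29:45} from \eqref{20:45:37}. For \eqref{17:19:25}, the Leibniz rule gives $|fg|_k\leq C\sum_{p+q=k}|f|_p|g|_q$, and interpolating each factor by \eqref{20:45:37} yields $|f|_p|g|_q\leq C(|f|_k|g|_0)^{p/k}(|f|_0|g|_k)^{q/k}$ with $p/k+q/k=1$; Young's inequality $a^\theta b^{1-\theta}\leq a+b$ then bounds this by $C(|f|_k|g|_0+|f|_0|g|_k)$. For \eqref{17:29:45}, writing $(i,j)=t(k,l)+(1-t)(m,n)$ and interpolating both factors by \eqref{20:45:37} gives $|f|_i|g|_j\leq C(|f|_k|g|_l)^t(|f|_m|g|_n)^{1-t}$, and Young's inequality again produces the bound $C(|f|_k|g|_l+|f|_m|g|_n)$. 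Both deductions are elementary once \eqref{20:45:37} is available.
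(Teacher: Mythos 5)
The paper offers no proof of this proposition, deferring entirely to Hamilton \cite{MR656198}; your argument is correct and is essentially the standard one found there: the Landau--Kolmogorov three-term inequality $|f|_{j+1}^2\leq C|f|_j|f|_{j+2}$ upgraded by discrete log-convexity for part 1), and Leibniz plus Young's inequality for parts 2) and 3). The boundary and step-size issues you flag are real but are handled exactly as you propose, so there is nothing to add.
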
 
\begin{rem}
These inequalities also hold for multiplicative operations involving tensors. For example, for a Dirac structure $L\subset \T M_\C$ we have 
\begin{align}\label{13:58:02}
| \lb u,v \rb |_k\leq C(|u|_{k+1}|v|_1+|u|_{1}|v|_{k+1}) \hspace{10mm} \forall u,v\in C^{\infty}(\Lambda^\bullet L).
\end{align} 
\end{rem}

\begin{lem}\label{14:51:45} Suppose that $(M,I)$ is 2- and 3- convex. 
Then there exist $a_0,b_0\in \mathbb{Z}_{> 0}$ and $\delta_0>0$ with the following property. 
For every $k\in\mathbb{Z}_{\geq0}$ and every integrable $\ve\in\mcD$ satisfying $|\ve_2|_{a_0}<\delta_0$, we have
\begin{align}\label{15:06:57}
|\delbar_{\ve_2}^\ast\delbar_{\ve_2}N_{\ve_2}\ve_3|_k\leq \big(\mcL(|\ve_1|_{k+b_0};|\ve_3|_{k+b_0})+\mcL(|\ve_2|_{k+b_0};|\ve_3|_{k+b_0})\big)\cdot |\ve_3|_{a_0}
\end{align}
\end{lem}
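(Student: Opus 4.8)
The plan is to exploit the fact that the failure of the deformed operator $\delbar_{\ve_2}$ to square to zero is itself controlled \emph{linearly} by $\ve_3$, and then to feed this into the Hodge theory of Theorem \ref{15:35:57}. Write $\varphi:=N_{\ve_2}\ve_3$. Since $H^{0,2}(M,I)=0$, Theorem \ref{15:35:57} 4) gives $\mcH^{0,2}_{\ve_2}=0$ for $|\ve_2|_{a_0}$ small, so $\varphi\in\text{Dom}(\square_{\ve_2})$ and $\Delta_{\ve_2}\varphi=\ve_3$; the quantity to be bounded is then $w:=\delbar^\ast_{\ve_2}\delbar_{\ve_2}\varphi=\ve_3-\delbar_{\ve_2}\delbar^\ast_{\ve_2}\varphi$. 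The crucial algebraic input is that, setting $\Omega:=\delbar\ve_2+\tfrac12\lb\ve_2,\ve_2\rb$, one has $\delbar_{\ve_2}^2=\lb\Omega,\cdot\rb$, while the Maurer--Cartan component \eqref{12:37:22} identifies $\Omega=-\lb\sigma+\ve_1,\ve_3\rb$, which is linear in $\ve_3$. This is exactly the mechanism producing the quadratic gain: on a closed integrable manifold $\delbar_{\ve_2}$ commutes with $N_{\ve_2}$ and $w$ vanishes identically, and here $w$ is measured by the obstruction $\Omega$.

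First I would show that both $\delbar_{\ve_2}w$ and $\delbar^\ast_{\ve_2}w$ are quadratic in $\ve_3$. For the first, using $\delbar_{\ve_2}\ve_3=0$ (equation \eqref{1:07:34}) and $w=\ve_3-\delbar_{\ve_2}\delbar^\ast_{\ve_2}\varphi$,
\[
\delbar_{\ve_2}w=\delbar_{\ve_2}\ve_3-\delbar_{\ve_2}^2(\delbar^\ast_{\ve_2}\varphi)=-\lb\Omega,\delbar^\ast_{\ve_2}\varphi\rb=\lb\lb\sigma+\ve_1,\ve_3\rb,\delbar^\ast_{\ve_2}N_{\ve_2}\ve_3\rb .
\]
For the second, $\delbar^\ast_{\ve_2}w=(\delbar^\ast_{\ve_2})^2\delbar_{\ve_2}\varphi=\lb\Omega,\cdot\rb^\ast(\delbar_{\ve_2}N_{\ve_2}\ve_3)$, the formal adjoint of $\lb\Omega,\cdot\rb$ applied to $\delbar_{\ve_2}N_{\ve_2}\ve_3$. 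In both expressions one factor is $\lb\sigma+\ve_1,\ve_3\rb$ (linear in $\ve_3$) and the other is built from $\delbar^\ast_{\ve_2}N_{\ve_2}\ve_3$ or $\delbar_{\ve_2}N_{\ve_2}\ve_3$ (also linear in $\ve_3$); bounding them with the Courant-bracket Leibniz estimate \eqref{13:58:02} and the uniform Neumann bound \eqref{15:53:51} yields $|\delbar_{\ve_2}w|_k,\,|\delbar^\ast_{\ve_2}w|_k\le(\mcL(|\ve_1|_{k+b_0};|\ve_3|_{k+b_0})+\mcL(|\ve_2|_{k+b_0};|\ve_3|_{k+b_0}))\cdot|\ve_3|_{a_0}$. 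Consequently $\Delta_{\ve_2}w=\delbar^\ast_{\ve_2}\delbar_{\ve_2}w+\delbar_{\ve_2}\delbar^\ast_{\ve_2}w$ satisfies a bound of exactly the quadratic shape claimed in \eqref{15:06:57}.

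The remaining step is to convert the quadratic control of $\Delta_{\ve_2}w$ into a quadratic bound on $w$ itself, and this is where I expect the real difficulty. Since $\mcH^{0,2}_{\ve_2}=0$, if $w$ were known to lie in $\text{Dom}(\square_{\ve_2})$ one could simply write $w=N_{\ve_2}\Delta_{\ve_2}w$ and apply \eqref{15:53:51} (or the a priori estimate \eqref{21:18:00}) to bound $|w|_k$ by the quadratic bound on $\Delta_{\ve_2}w$, finishing the proof with $b_0$ absorbing the fixed derivative shift $b$. The obstacle is precisely that $w=\delbar^\ast_{\ve_2}\delbar_{\ve_2}\varphi$ need not satisfy the two Neumann boundary conditions \eqref{08:58:09}, so it is not automatically in the domain; this is the ``technicality'' for which $3$-convexity is invoked. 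To handle it I would work with the auxiliary $(0,3)$-form $\psi:=\delbar_{\ve_2}\varphi$, whose first Neumann condition $\sigma(\delbar^\ast_{\ve_2},dr)\psi|_{\del M}=0$ is inherited from the second domain condition on $\varphi\in\text{Dom}(\square_{\ve_2})$ and whose harmonic projection vanishes; $3$-convexity guarantees that Theorem \ref{15:35:57} applies to $\square_{\ve_2}$ on $(0,3)$-forms, providing the Hodge decomposition and elliptic estimates needed to control $w=\delbar^\ast_{\ve_2}\psi$ and to justify the boundary terms dropped in the integrations by parts above. Carefully reconciling these boundary conditions --- rather than the algebra of the two preceding steps, which is clean --- is the part of the argument that must be executed with the most care.
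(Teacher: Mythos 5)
Your reading of where the quadratic gain comes from is exactly right: by \eqref{12:37:22} the obstruction $\delbar_{\ve_2}^2=\lb\Omega,\cdot\rb$ with $\Omega=-\lb\sigma+\ve_1,\ve_3\rb$ is linear in $\ve_3$, so the commutator $[\Delta_{\ve_2},\delbar_{\ve_2}]N_{\ve_2}\ve_3$ is quadratic; your computations of $\delbar_{\ve_2}w$ and $\delbar^\ast_{\ve_2}w$ are the same algebra as the paper's identity $\Delta_{\ve_2}\varphi=[\Delta_{\ve_2},\delbar_{\ve_2}]N_{\ve_2}\ve_3$ for the $(0,3)$-form $\varphi:=\delbar_{\ve_2}N_{\ve_2}\ve_3$ (your $\psi$). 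You also correctly locate the obstacle: the second Neumann condition $\sigma(\delbar^\ast_{\ve_2},dr)\delbar_{\ve_2}\varphi|_{\del M}=0$ fails precisely because $\delbar_{\ve_2}^2\neq 0$, so $\varphi\notin\text{Dom}(\square_{\ve_2})$ and \eqref{14:00:04} cannot be applied directly. But you stop there and declare this the part ``to be executed with care'' --- that is the actual content of the lemma, and it is left undone. (Your intermediate detour through the $(0,2)$-form $w$ is worse off, not better: there is no reason even the \emph{first} Neumann condition $\sigma(\delbar^\ast_{\ve_2},dr)w|_{\del M}=0$ should hold for $w=\delbar^\ast_{\ve_2}\delbar_{\ve_2}N_{\ve_2}\ve_3$, and your use of $\Delta_{\ve_2}N_{\ve_2}\ve_3=\ve_3$ quietly invokes $\mathcal{H}^{0,2}_{\ve_2}=0$, which is not a hypothesis of this lemma.)

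The missing idea is an explicit boundary correction. Set $f:=\sigma(\delbar^\ast_{\ve_2},dr)\delbar_{\ve_2}r$, which is nonvanishing near $\del M$ for small $\ve_2$, and
\[
\alpha:=f^{-1}\sigma(\delbar^\ast_{\ve_2},dr)\delbar_{\ve_2}\varphi
=-f^{-1}\sigma(\delbar^\ast_{\ve_2},dr)\lb\lb\sigma+\ve_1,\ve_3\rb,N_{\ve_2}\ve_3\rb .
\]
Then $\widetilde\varphi:=\varphi-r\alpha$ \emph{does} satisfy both conditions of \eqref{08:58:09} (the first because $\widetilde\varphi=\varphi$ on $\del M$, the second because $\sigma(\delbar^\ast_{\ve_2},dr)$ is a derivation killing $\alpha$, so the correction produces exactly the term $-f\alpha$ that cancels $\sigma(\delbar^\ast_{\ve_2},dr)\delbar_{\ve_2}\varphi$ on $\del M$). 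The point that makes the whole scheme close up is that, by \eqref{12:37:22}, $\alpha$ is itself quadratic in $\ve_3$, so applying \eqref{14:00:04} and \eqref{15:53:51} to $\widetilde\varphi$ and estimating the three resulting terms ($\Delta_{\ve_2}\varphi$, $\Delta_{\ve_2}(r\alpha)$, and $\pi_{\ve_2}(r\alpha)$, the last via \eqref{21:18:00}) preserves the factor $|\ve_3|_{a_0}$. Without this construction --- or an equivalent justification that the boundary terms you ``drop in the integrations by parts'' are genuinely controlled --- the argument does not yield \eqref{15:06:57}.
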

\begin{proof} For suitable $a_0$ and $\delta_0$ we know that the Neumann operator $N_{\ve_2}$ is well-defined in degrees 2 and 3 when $|\ve_2|_{a_0}<\delta_0$. Since for every $\varphi\in \Omega^{0,\bullet}(M)$ we have (see Example \ref{16:37:47})
\begin{align*}
| \delbar^\ast_{\ve_2}\varphi|_k\leq & \mcL(|\ve_2|_{k+1};|\varphi|_{k+1}),
\end{align*}
it suffices to focus our attention on $\varphi:=\delbar_{\ve_2}N_{\ve_2}\ve_3\in\Omega^{0,3}(M)$. 
We would like to apply (\ref{14:00:04}) to $\varphi$, but in order to do so we need $\varphi\in \text{Dom}(\square_{\ve_2})$. The first Neumann boundary condition  
$\sigma(\delbar^\ast_{\ve_2},dr)\varphi|_{\del M}=0$ holds because $N_{\ve_2}\ve_3\in \text{Dom}(\square_{\ve_2})$, but the second Neumann boundary condition requires that
$$\sigma(\delbar^\ast_{\ve_2},dr)\delbar_{\ve_2}\varphi|_{\del M}=\sigma(\delbar^\ast_{\ve_2},dr)\delbar^2_{\ve_2}N_{\ve_2}\ve_3|_{\del M}$$
is equal to zero, and this is not guaranteed because $\delbar_{\ve_2}^2$ is not necessarily zero. To get around this, let $r$ be a boundary-defining function as in Proposition \ref{09:59:56} 
and consider the function
\begin{align*}
f:=\sigma(\delbar^\ast_{\ve_2} ,dr)\delbar_{\ve_2}r.
\end{align*}
Since $f=-\frac{1}2$ on $\del M$ when $\ve_2=0$, $f$ is non-vanishing around $\del M$ for small $|\ve_2|_1$. Consequently, $f^{-1}$ is well-defined near $\del M$ and can be extended smoothly to all of $M$ by multiplying it with a fixed bump function that is supported near $\del M$. 
Define 
\begin{align}\label{16:55:09}
\alpha:=f^{-1}\sigma(\delbar^\ast_{\ve_2} ,dr)\delbar_{\ve_2} \varphi
=-f^{-1}\sigma(\delbar^\ast_{\ve_2} ,dr) \lb \lb \sigma+\ve_1,\ve_3\rb, N_{\ve_2}\ve_3\rb, 
\end{align}
where in the second equality we used (\ref{12:37:22}). We claim that 
$$\widetilde{\varphi}:=\varphi-r\alpha\in \text{Dom}(\square_{\ve_2}).$$ 
Indeed, the first Neumann condition holds because $\widetilde{\varphi}=\varphi$ on $\del M$. For the second Neumann condition, we use that $\sigma(\delbar^\ast_{\ve_2} ,dr)$ is a derivation and that $\sigma(\delbar^\ast_{\ve_2} ,dr)\alpha=0$ to obtain 
\begin{align*}
\sigma(\delbar^\ast_{\ve_2} ,dr)\delbar_{\ve_2}\widetilde{\varphi}|_{\del M}=\sigma(\delbar^\ast_{\ve_2} ,dr) (\delbar_{\ve_2}\varphi-\delbar_{\ve_2}r\wedge \alpha) |_{\del M}=(f\alpha-f\alpha) |_{\del M}=0.
\end{align*}
Since $\widetilde{\varphi}\in \text{Dom}(\square_{\ve_2})$ we can apply (\ref{14:00:04}) to $\widetilde{\varphi}$ which, in combination with (\ref{15:53:51}), yields
\begin{align}
|\widetilde\varphi|_k\leq & \mcL(|\ve_2|_{k+b};|\Delta_{\ve_2}\widetilde\varphi|_{k+b})+|\pi_{\ve_2}\widetilde\varphi|_k \nonumber \\
\leq & \mcL(|\ve_2|_{k+b};|\Delta_{\ve_2}\varphi|_{k+b})+ \mcL(|\ve_2|_{k+b};|\Delta_{\ve_2}(r\alpha)|_{k+b})+|\pi_{\ve_2}(r\alpha)|_k.\label{14:37:25}
\end{align}
Note that $\pi_{\ve_2}\varphi=0$ because $\text{Im}(\delbar_{\ve_2})$ is orthogonal to $\mcH_{\ve_2}$. We will estimate the above three terms individually. Since $\delbar_{\ve_2}\Delta_{\ve_2}N_{\ve_2}\ve_3=\delbar_{\ve_2}\ve_3=0$, we have
 \begin{align}
\Delta_{\ve_2}\varphi=[\Delta_{\ve_2},\delbar_{\ve_2}]N_{\ve_2}\ve_3=&
(\delbar_{\ve_2}^\ast \delbar_{\ve_2}^2-\delbar_{\ve_2}^2\delbar_{\ve_2}^\ast) N_{\ve_2}\ve_3\nonumber \\=&-\delbar_{\ve_2}^\ast\lb \lb\sigma+\ve_1,\ve_3\rb, N_{\ve_2}\ve_3\rb+\lb \lb\sigma+\ve_1,\ve_3\rb, \delbar^\ast_{\ve_2}N_{\ve_2}\ve_3\rb,\label{13:47:40}
\end{align}
where in the last step we used (\ref{12:37:22}). Next, since $\Delta_{\ve_2}$ is a second-order operator we obtain 
\begin{align}\label{13:47:23}
\mcL(|\ve_2|_{k+b};|\Delta_{\ve_2}(r\alpha)|_{k+b})\leq \mcL(|\ve_2|_{k+b+2};|r\alpha|_{k+b+2}).
\end{align}
Finally, using the Sobolev estimate and (\ref{21:18:00}) we conclude that
 \begin{align}\label{13:47:54}
|\pi_{\ve_2}(r\alpha)|_k\leq C||\pi_{\ve_2}(r\alpha)||_{k+b}\leq \mcL(|\ve_2|_{k+2b};||\pi_{\ve_2}(r\alpha)||)\leq \mcL(|\ve_2|_{k+2b};|r\alpha|). 
\end{align}
Combining (\ref{16:55:09})-(\ref{13:47:54}), we obtain the estimate
\begin{align}
|\varphi|_k\leq \mcL(|\ve_2|_{k+2b};|\Theta(\ve)|_{k+2b})
\end{align}
where
\begin{align*}
\Theta(\ve):=\lb \lb \sigma+\ve_1 ,\ve_3\rb, N_{\ve_2}\ve_3\rb+\lb \lb \sigma+\ve_1 ,\ve_3\rb, \delbar_{\ve_2}^\ast N_{\ve_2}\ve_3\rb.
\end{align*}
Applying (\ref{13:58:02}) repeatedly together with (\ref{15:53:51}) we obtain  
\begin{align*}
|\Theta(\ve)|_{k+2b}\leq \big( \mcL(|\ve_1|_{k+2b+2};|\ve_3|_{k+2b+2})+ \mcL(|\ve_2|_{k+3b+2};|\ve_3|_{k+3b+2})\big)\cdot |\ve_3|_{a_0}. 
\end{align*}
This proves the lemma if we set $b_0:=3b+2$. 
\end{proof}
The following lemma summarizes all the estimates that we will need for the proof of Theorem \ref{14:11:52}. Together with Proposition \ref{10:57:29} below, this forms the input for the Nash-Moser algorithm. 

\begin{lem}\label{20:33:39} Suppose that $(M,I)$ is $2$- and $3$-convex and that $H^{0,2}(M,I)=0$. Then there exist $a_0,b_0\in \mathbb{Z}_{>0}$ and $\delta_0\in (0,1)$ with the following property.
For every $k\in\mathbb{Z}_{\geq0}$, every integrable $\ve\in\mcD$ satisfying $|\ve|_{a_0}<\delta_0$ and every $\xi\in \mcG$ satisfying $|\xi|_2<(1+|\sigma|_0)^{-1}$ 
we have
\begin{align} 
|(e^{d\xi}\cdot \ve)_i|_k & \leq |\ve_i|_k+C|\ve|_k|\xi|_{1}+C|\xi|_{k+1}, \hspace{15mm} \text{for} \ i=1,2,3,\label{18:14:22} \\
|\Phi(\ve)|_k  & \leq\mcL(|\ve_2|_{k+b_0};|\ve_3|_{k+b_0}), \label{20:48:36} \\
|(e^{d\Phi(\ve)}\cdot \ve)_3|_k &  \leq \big(\mcL(|\ve_1|_{k+b_0};|\ve_3|_{k+b_0})+\mcL(|\ve_2|_{k+b_0};|\ve_3|_{k+b_0})\big)\cdot |\ve_3|_{a_0}
\label{21:49:42}
\end{align}
\end{lem}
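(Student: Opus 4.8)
The plan is to establish the three estimates \eqref{18:14:22}, \eqref{20:48:36} and \eqref{21:49:42} in turn, the first two being preparatory and the third being the quadratic estimate on which the Nash--Moser iteration hinges. Throughout I would use freely that, once $a_0$ is taken large and $\delta_0$ small, the low-order norms $|\ve|_{a_0}$ and $|\xi|_1$ are bounded by a fixed constant, so that by the multiplicative estimate \eqref{17:19:25} every low-order factor of a product may be absorbed into the constants hidden in the Leibniz symbol $\mcL$, leaving at most one high-order norm in each monomial.

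For \eqref{18:14:22} I would substitute $B=d\xi$ into the explicit formulas of Lemma \ref{17:03:57} and subtract $\ve_i$. Each summand in $(e^{d\xi}\cdot\ve)_i-\ve_i$ then carries at least one factor of $B=d\xi$; expanding the inverse $(1+B^{2,0}(\sigma+\ve_1))^{-1}$ as a convergent Neumann series (legitimate since $|\xi|_2<(1+|\sigma|_0)^{-1}$, c.f.\ Remark \ref{11:43:33}) and applying \eqref{17:19:25} repeatedly, each monomial is bounded by a product in which exactly one factor carries the top derivative. That factor is either a norm $|B|_k\leq C|\xi|_{k+1}$ or a norm $|\ve_j|_k\leq C|\ve|_k$, the remaining factors being low-order and hence bounded by constants or by the small quantity $|\xi|_1$. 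Collecting terms yields the claimed bound $|\ve_i|_k+C|\ve|_k|\xi|_1+C|\xi|_{k+1}$.

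For \eqref{20:48:36} the key building block is $\delbar^\ast_{\ve_2}N_{\ve_2}\ve_3$. Bounding $\delbar^\ast_{\ve_2}$ as in Example \ref{16:37:47} and then inserting the uniform Neumann estimate \eqref{15:53:51} gives
\begin{align*}
|\delbar^\ast_{\ve_2}N_{\ve_2}\ve_3|_k\leq \mcL(|\ve_2|_{k+1};|N_{\ve_2}\ve_3|_{k+1})\leq \mcL(|\ve_2|_{k+b_0};|\ve_3|_{k+b_0}),
\end{align*}
where I use that a Leibniz bound composed with a Leibniz bound is again a Leibniz bound, at the cost of a larger shift $b_0$. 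The remaining factors $(1+\ve_2)$, $(1-\overline{\ve_2}\ve_2)^{-1}$ and $\overline{\ve_2}$ in the definition \eqref{14:14:54} of $\Phi$ are controlled with \eqref{17:19:25} and the smallness of $\ve_2$ (so the inverse is bounded), which only modify constants and the shift, giving \eqref{20:48:36}.

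The heart of the matter is \eqref{21:49:42}, and here the main obstacle is producing the extra low-order factor $|\ve_3|_{a_0}$ that makes the estimate genuinely quadratic in $\ve_3$. I would start from the identity \eqref{14:48:01}. Its first term $\delbar^\ast_{\ve_2}\delbar_{\ve_2}N_{\ve_2}\ve_3$ is bounded precisely by Lemma \ref{14:51:45}, which already has the advertised shape. For the second term $-\lb\ve_3,(\sigma+\ve_1)(\Phi^{1,0})\rb$ and the third term---which contains $d\Phi$ quadratically---I would use that $\Phi$, and hence $d\Phi$, is itself linear in $\ve_3$ and controlled by \eqref{20:48:36}; together with the bracket estimate \eqref{13:58:02} and \eqref{17:19:25} these are bounded by products of two Leibniz expressions, each carrying one power of $\ve_3$. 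The final and most delicate step is to convert such a product into the form $\mcL(\cdots;|\ve_3|_{k+b_0})\cdot|\ve_3|_{a_0}$: one applies the interpolation inequalities \eqref{20:45:37} and \eqref{17:29:45} to trade the two middling $\ve_3$-norms for one high norm $|\ve_3|_{k+b_0}$ and one fixed low norm $|\ve_3|_{a_0}$, enlarging $b_0$ if necessary. The bookkeeping ensuring that exactly one high-order $\ve_3$-derivative survives in each monomial, with all remaining $\ve_3$-dependence collapsing to $|\ve_3|_{a_0}$, is the crux of the argument; once it is in place the three contributions assemble into \eqref{21:49:42}.
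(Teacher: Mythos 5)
Your proposal is correct and follows essentially the same route as the paper: Lemma \ref{17:03:57} with $B=d\xi$ for \eqref{18:14:22}, composition of the $\delbar^\ast_{\ve_2}$ bound with the uniform Neumann estimate \eqref{15:53:51} for \eqref{20:48:36}, and the identity \eqref{14:48:01} together with Lemma \ref{14:51:45} plus interpolation for the quadratic estimate \eqref{21:49:42}. The only deviation is that for the inverse $(1+d\xi^{2,0}(\sigma+\ve_1))^{-1}$ you expand a Neumann series (whose termwise $C^k$ bounds via \eqref{17:19:25} require a little care to sum), whereas the paper avoids this by proving $|\kappa^{-1}|_k\leq C(1+|\kappa|_k)$ by induction from the identity $\kappa^{-1}=1-d\xi^{2,0}(\sigma+\ve_1)\kappa^{-1}$.
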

\begin{proof} 
We start with (\ref{18:14:22}): Since $|\xi|_1<(1+|\sigma|_0)^{-1}$ we know that $e^{d\xi}\cdot \ve$ is well-defined. The reason we require the bound on $|\xi|_2$ is to simplify the estimates a little in the following sense. Consider
\begin{align*}
\kappa:=1+d\xi^{2,0}(\sigma+\ve_1):T^{1,0}M\rightarrow T^{1,0}M,
\end{align*}
which is invertible by assumption. Then there exists a constant $C$ depending on $k$, $|\kappa^{-1}|_0$ and $|\kappa|_1$ (the latter depends on $|\xi|_2$) such that  
\begin{align}\label{17:24:09}
|\kappa^{-1}|_k\leq C(1+|\kappa|_k).
\end{align}
This can be proved by induction on $k$, using $\kappa\kappa^{-1}=1$ and (\ref{17:29:45}). Combining
$$\kappa^{-1}=1- d\xi^{2,0}(\sigma+\ve_1)\kappa^{-1},$$
(\ref{17:19:25}), (\ref{17:24:09}) and Lemma \ref{17:03:57}, we obtain Eq.(\ref{18:14:22}). 

The proof of equation (\ref{20:48:36}) follows from the definition of $\Phi$, Theorem \ref{15:35:57} 4) and Proposition \ref{13:43:23}.

Finally we turn to \eqref{21:49:42}. An explicit expression for $(e^{d\Phi(\ve)}\cdot \ve)_3$ is given in (\ref{14:48:01}). The first term in that expression was bounded in Lemma \ref{14:51:45}, while the remaining terms can be bounded more directly using again claim 4) from Theorem \ref{15:35:57} and Proposition \ref{13:43:23}.
\end{proof}
\begin{rem}
Whenever we use (\ref{18:14:22}) we will have a prescribed uniform bound on $|\xi|_2$, so that $C$ effectively only depends on $k$.  
\end{rem}
 The final tool needed for the algorithm is the concept of \textsl{smoothing operators}. 

\begin{prop}[\cite{MR656198}]\label{10:57:29}
There exists a family $\{S_t\}_{t>1}$ of endomorphisms on $\mcG=C^\infty(T^\ast M)$ satisfying  
\begin{align}
|S_t\xi|_p & \leq Ct^{p-q} |\xi|_q \label{18:14:42} \\
|(1-S_t)\xi|_q & \leq C t^{q-p} |\xi|_p  \label{11:03:30}
\end{align}
for every $p\geq q$, where $C$ is a constant depending only on $p$ and $q$. 
\end{prop}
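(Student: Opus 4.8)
The plan is to build the smoothing operators by transporting the standard Euclidean mollification to $M$ through a boundary extension, the one genuinely new ingredient being the treatment of $\del M$. First I would fix a Whitney embedding $M\hookrightarrow \T^N$ into a flat torus of large dimension, together with a total extension operator $E\colon C^\infty(M)\to C^\infty(\T^N)$ which is bounded in \emph{every} $C^k$-norm simultaneously, $|E\xi|_k\leq C_k|\xi|_k$, and which satisfies $R\circ E=\mathrm{Id}$, where $R$ denotes restriction to $M$. Such an $E$ is provided by Seeley's extension theorem applied in boundary charts and patched by a partition of unity; working in the components furnished by the embedding lets us treat a section $\xi\in\mcG=C^\infty(T^\ast M)$ as an $\R^N$-valued function, so that scalar mollification applies coordinatewise.

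On the torus there is no boundary and the construction is clean. Choose a Schwartz function $\phi$ on $\R^N$ whose Fourier transform equals $1$ near the origin; equivalently $\int\phi=1$ and $\int y^\alpha\phi(y)\,dy=0$ for every multi-index $\alpha$ with $|\alpha|\geq 1$. Set $\phi_t(y):=t^N\phi(ty)$, let $\tilde S_t$ be convolution with the periodization of $\phi_t$, and finally put $S_t:=R\circ \tilde S_t\circ E$. Because $E$ and $R$ are uniformly bounded on all $C^k$ and $RE=\mathrm{Id}$ gives $1-S_t=R\,(1-\tilde S_t)\,E$, it suffices to prove the two estimates on $\T^N$, and by translation invariance these reduce to the corresponding statements for convolution on $\R^N$.

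The first estimate is pure scaling. For a multi-index $\beta$ with $|\beta|=p-q$ one has $\|\partial^\beta\phi_t\|_{L^1}=t^{p-q}\|\partial^\beta\phi\|_{L^1}$; writing an order-$p$ derivative of $\phi_t\ast f$ as $\partial^\beta\phi_t\ast\partial^\gamma f$ with $|\gamma|=q$ and applying Young's inequality yields $|\phi_t\ast f|_p\leq C\,t^{p-q}|f|_q$. The second estimate uses the vanishing moments. Applying an order-$q$ derivative and writing $(1-\phi_t\ast)\,\partial^\gamma f(x)=\int\phi_t(y)\big(\partial^\gamma f(x)-\partial^\gamma f(x-y)\big)\,dy$, a Taylor expansion of $\partial^\gamma f$ to order $p-q-1$ about $x$ has all its intermediate terms killed by the moment conditions $\int y^\alpha\phi_t=0$, leaving only the order-$(p-q)$ remainder. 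Since $\int|\phi_t(y)|\,|y|^{p-q}\,dy=t^{q-p}\int|\phi(z)|\,|z|^{p-q}\,dz$, this gives $|(1-\phi_t\ast)f|_q\leq C\,t^{q-p}|f|_p$, as required.

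The main obstacle is not either Euclidean estimate — those are routine scaling-and-moment computations — but rather securing a single extension operator $E$ bounded in all $C^k$-norms at once, so that the clean exponents $t^{p-q}$ and $t^{q-p}$ survive the passage through the boundary with constants independent of $t$. A naive extension (for instance by zero) fails to be smooth, while an extension of only fixed finite regularity would make the constant $C$ depend on the order in a way that breaks the telescoping estimates needed later in the Nash--Moser scheme; Seeley's simultaneous extension is precisely what removes this difficulty. As an alternative one could replace $\tilde S_t$ by the spectral cutoff $\chi(-\Delta/t^2)$ of a Laplacian on the double of $M$, but converting its natural Sobolev bounds into the stated $C^k$-bounds costs a fixed shift of the indices through Sobolev embedding, so the convolution construction is preferable for obtaining the sharp exponents.
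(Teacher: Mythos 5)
Your construction is correct and essentially reproduces the argument of the cited source: the paper offers no proof of this proposition, deferring entirely to Hamilton \cite{MR656198}, whose smoothing operators are likewise built by extending to Euclidean space (or a torus) with a Seeley-type total extension operator and convolving with a rescaled kernel whose Fourier transform equals $1$ near the origin, so that the scaling bound gives \eqref{18:14:42} and the vanishing-moment/Taylor remainder argument gives \eqref{11:03:30}. Your remarks on the need for an extension operator bounded simultaneously in all $C^k$-norms correctly identify the only point requiring care on a compact manifold with boundary, and the verification is complete as written.
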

\begin{rem}
Note that (\ref{18:14:42}) allows us to trade a high derivative norm $|\cdot |_p$ for a low derivative norm $| \cdot |_q$, while (\ref{11:03:30}) implies that $S_t\xi$ converges to $\xi$ as $t$ goes to infinity. As $t\rightarrow \infty$, the right hand side of (\ref{18:14:42}) blows up while the right hand side of (\ref{11:03:30}) goes to zero.
\end{rem}

\subsubsection*{The Nash-Moser algorithm}

We will now set up the algorithm that is required to prove Theorem \ref{14:11:52}. We will follow the strategy employed in \cite{MR2855089}, but since the setting is rather different we present things here independently. For a given constant $t_0>1$ we define the sequence $t_i$ via $t_{i+1}:=t_i^{3/2}$. Subsequently, for a small and integrable $\ve\in \mcD$ we define the sequence $(\ve^{(i)},\xi^{(i)})$ recursively via $\ve^{(0)}:=\ve$ and, for every $i\geq 0$, 
\begin{alignat}{3}
&\xi^{(i)}:=S_{t_{i}}\Phi(\ve^{(i)}),  \hspace{2cm} &&  \ve^{(i+1)}:=e^{d\xi^{(i)}}\cdot \ve^{(i)}. \label{11:56:23}
\end{alignat} 
Here $\Phi(\ve)$ is given by (\ref{14:14:54}) and $S_t$ denotes the smoothing operator of Proposition \ref{10:57:29}. The goal is to show that the series $\sum_{i} \xi^{(i)}$ converges in $\mcG$ to a smooth one-form $\xi$ with the property that $(e^{d\xi}\cdot \ve)_3=0$. We will establish this by means of two separate lemmas. The first will imply that our sequence is actually well-defined and that $\sum_{i} \xi^{(i)}$ converges with respect to a certain fixed $C^k$-norm. The second lemma will upgrade this convergence to all $C^k$-norms. 
\begin{lem}\label{22:15:42} With the set-up and notation of Lemma \ref{20:33:39}, define
\begin{align}\label{10:07:34}
B:=11+2b_0,	\quad \quad		l:=\max(a_0,(4b_0+2)(12+2b_0)+1).
\end{align} 
Then there exists $t_0>1$ with the following property: if $\ve^{(0)}\in \mcD$ is integrable and satisfies
\begin{align}\label{12:08:40}	
|\ve^{(0)}|_l < \frac{\delta_0}2 ,\hspace{15mm} |\ve^{(0)}|_{2l}<  t_0^B \hspace{7.5mm}\text{and} \hspace{7.5mm}  |\ve^{(0)}_3|_{l}< t_0^{-1},
\end{align}
then the sequence $(\ve^{(i)},\xi^{(i)})$ given by (\ref{11:56:23}) is well-defined and for all $i\in \mathbb{Z}_{\geq 0}$ we have
\begin{multicols}{2}
		\begin{enumerate}
			\item[(1)] $ |\ve^{(i)}|_l    < \delta_0\cdot \frac{i+1}{i+2} $
			\item[(2)] $|\ve^{(i)}|_{2l} < t_i^B$
			\item[(3)] $|\ve^{(i)}_3|_{l}  < t_i^{-1}$
			\item[(4)] $|\xi^{(i)}|_{l+b_0}  < t_i^{-1/2}$
		\end{enumerate}
\end{multicols}
\end{lem}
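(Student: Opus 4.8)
The plan is to establish the four bounds (1)--(4) simultaneously by induction on $i$. The base case $i=0$ is immediate: the three hypotheses in \eqref{12:08:40} are exactly (1)--(3) for $i=0$ (note $\tfrac{0+1}{0+2}=\tfrac12$), and (4) for $i=0$ follows from the same computation as in the inductive step. The engine of each step is the chain of implications
\[
(1),(2),(3)\ \text{at}\ i \ \Longrightarrow\ (4)\ \text{at}\ i \ \Longrightarrow\ (1),(2),(3)\ \text{at}\ i+1 .
\]
Before running any estimate one checks that the iterates \eqref{11:56:23} are well defined: since $a_0\le l$, the bound $|\ve^{(i)}|_l<\delta_0$ forces $|\ve^{(i)}_2|_{a_0}<\delta_0$, so $\Phi(\ve^{(i)})$ exists by Theorem \ref{15:35:57}, and $|\xi^{(i)}|_2\le|\xi^{(i)}|_{l+b_0}<t_i^{-1/2}\le t_0^{-1/2}$ makes $e^{d\xi^{(i)}}$ an admissible $B$-field transform for $t_0$ large (Remark \ref{11:43:33}); integrability of $\ve^{(i)}$ persists because $B$-field transforms are Courant symmetries.

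For (4) I would write $\xi^{(i)}=S_{t_i}\Phi(\ve^{(i)})$ and use $|S_t\eta|_{l+b_0}\le C|\eta|_{l+b_0}$ (the case $p=q$ of \eqref{18:14:42}) together with \eqref{20:48:36}, reducing matters to bounding $\mcL(|\ve^{(i)}_2|_{l+2b_0};|\ve^{(i)}_3|_{l+2b_0})$. The only norms exceeding $l$ lie in $[l,2l]$, so I would interpolate them with \eqref{20:45:37} between the small norm (3) and the large norm (2): with $\theta=2b_0/l$ this produces factors $t_i^{-1+\theta(B+1)}$ and $t_i^{-1+\theta B}$, and the choice of $l$ in \eqref{10:07:34} is precisely what guarantees $\theta(B+1)=\tfrac{2b_0(B+1)}{l}<\tfrac12$, hence $|\xi^{(i)}|_{l+b_0}\le Ct_i^{-1/2-\eta}$ for some $\eta>0$; a large $t_0$ then absorbs $C$. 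Bound (3) at $i+1$ is the heart of the scheme. Here I would first estimate the \emph{ideal} residual $(e^{d\Phi(\ve^{(i)})}\cdot\ve^{(i)})_3$ using \eqref{21:49:42}, whose right-hand side is quadratic in $\ve_3$; interpolating the surviving $(l+b_0)$-norms as above yields $Ct_i^{-2+b_0(B+1)/l}$ with exponent strictly below $-\tfrac32=\log_{t_i}t_{i+1}^{-1}$. The passage from $\Phi(\ve^{(i)})$ to $\xi^{(i)}=\Phi(\ve^{(i)})-(1-S_{t_i})\Phi(\ve^{(i)})$ introduces a smoothing error which I would control by \eqref{11:03:30}: since $|\Phi(\ve^{(i)})|_{2l-b_0}\le Ct_i^{B}$, the tail $(1-S_{t_i})\Phi(\ve^{(i)})$ has size $t_i^{-l+O(1)}$ in low norms, negligible beside $t_i^{-3/2}$ because $l$ is huge. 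Factoring $e^{d\xi^{(i)}}=e^{-d(1-S_{t_i})\Phi(\ve^{(i)})}\,e^{d\Phi(\ve^{(i)})}$ and applying \eqref{18:14:22} to the outer transform then delivers (3).

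For (2) the new ingredient is the top norm $|\xi^{(i)}|_{2l+1}$, which cannot be read off $\Phi$ without exceeding the budget $2l$; this is where the smoothing operator is indispensable. I would bound $|\Phi(\ve^{(i)})|_{2l-b_0}\le Ct_i^B$ (again interpolating in $[l,2l]$) and trade up via \eqref{18:14:42}, giving $|\xi^{(i)}|_{2l+1}\le Ct_i^{b_0+1}|\Phi(\ve^{(i)})|_{2l-b_0}\le Ct_i^{B+b_0+1}$; feeding this and (4) into \eqref{18:14:22} produces $|\ve^{(i+1)}|_{2l}\le Ct_i^{B+b_0+1}$, and $B=11+2b_0$ is chosen exactly so that $B+b_0+1<\tfrac32B$, leaving a factor $t_i^{-9/2}$ of slack below $t_{i+1}^B=t_i^{3B/2}$. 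Finally (1) at $i+1$ comes from \eqref{18:14:22} and (4), yielding $|\ve^{(i+1)}|_l\le|\ve^{(i)}|_l+C't_i^{-1/2}$; since $\delta_0\tfrac{i+2}{i+3}-\delta_0\tfrac{i+1}{i+2}=\tfrac{\delta_0}{(i+2)(i+3)}$ decays only polynomially while $t_i=t_0^{(3/2)^i}$ grows doubly exponentially, the increment $C't_i^{-1/2}$ stays below $\tfrac{\delta_0}{(i+2)(i+3)}$ for all $i$ once $t_0$ is large.

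The main obstacle is the exponent bookkeeping: every estimate must close with a \emph{strictly} negative residual power of $t_i$ (the $-\eta$, $-\tfrac92$, $-\tfrac32-\eta$, etc.), so that a single sufficiently large $t_0$ absorbs all accumulated constants $C$ uniformly in $i$. This is exactly what forces the otherwise mysterious values $B=11+2b_0$ and $l=\max(a_0,(4b_0+2)(12+2b_0)+1)$ in \eqref{10:07:34}. The two delicate points are (a) that the quadratic gain in $\ve_3$ from \eqref{21:49:42} must outrun the loss of derivatives incurred by interpolating the intermediate norms between $l$ and $2l$, and (b) that the smoothing tail $(1-S_{t_i})\Phi$ is genuinely negligible. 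I expect the derivation of (3) at $i+1$ to be the crux, since it is the only place where the quadratic structure, the interpolation, and the smoothing error must all be balanced at once.
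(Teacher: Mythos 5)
Your proposal is correct and follows essentially the same route as the paper: a simultaneous induction on $i$ in the order $(1),(2),(3)\Rightarrow(4)\Rightarrow(1),(2),(3)$, powered by Lemma \ref{20:33:39}, interpolation \eqref{20:45:37} between the $l$- and $2l$-norms, the smoothing estimates of Proposition \ref{10:57:29}, and exactly the same exponent bookkeeping (the conditions $l>4b_0(1+B)$, $l>2b_0(B+1)$ and $B>2b_0+6$ that your $-\eta$, $-\tfrac32-\eta$ and $-\tfrac92$ margins encode). The one place you are actually more explicit than the paper is step (3): the paper directly writes $\ve_3^{(i+1)}=(e^{d\Phi(\ve^{(i)})}\cdot\ve^{(i)})_3$ even though $\xi^{(i)}=S_{t_i}\Phi(\ve^{(i)})$, whereas your factorization $e^{d\xi^{(i)}}=e^{-d(1-S_{t_i})\Phi(\ve^{(i)})}e^{d\Phi(\ve^{(i)})}$ together with \eqref{11:03:30} cleanly justifies discarding the smoothing tail.
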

\begin{rem} Note that (1) and (4) imply that at each stage in the sequence we can apply the Neumann operator and hence the map $\Phi$, as well as Lemma \ref{20:33:39}. This means that our sequence is in fact well defined. Moreover, (4) implies that the sequence $\xi^{(i)}$ converges to zero fast enough so that the sum $\sum_i \xi^{(i)}$ converges in the $C^{l+b_0}$-topology. Property (3) then shows that the error terms $\ve^{(i)}_3$ converge to zero as desired. All of this comes at a price, for we have to allow $|\ve^{(i)}|_{2l}$ to grow at the rate indicated in (2).
\end{rem}
\begin{proof} We initially set (say) $t_0:=2$ and throughout the proof, which will proceed by induction on $i\geq 0$, we will enlarge $t_0$ a finite number of times. For $i=0$ only (4) needs proof, as the first three estimates are precisely the starting assumptions. We have
\begin{align*}
|\xi^{(0)}|_{l+b_0}=|S_{t_{0}}\Phi(\ve^{(0)})|_{l+b_0}
\leq C |\Phi(\ve^{(0)})|_{l+b_0}
\leq  \mcL(|\ve_2^{(0)}|_{l+2b_0};|\ve_3^{(0)}|_{l+2b_0}).
\end{align*}
Here we used (\ref{18:14:42}) and (\ref{20:48:36}). Since $l\geq 2b_0+2$ we can use (\ref{20:45:37}) to obtain
\begin{align}\label{21:56:26}
|\ve^{(0)}_2|_{l+2b_0}\leq C |\ve^{(0)}_2|_{l}^{(l-2b_0)/l}|\ve^{(0)}_2|^{2b_0/l}_{2l}\leq C(\tfrac{\delta_0}2)^{(l-2b_0)/l}t_0^{2b_0B/l}\leq C t_0^{2b_0B/l}
\end{align}
and, by a similar calculation, $|\ve^{(0)}_2|_{\lfloor (l+2b_0)/2\rfloor+1}\leq C$ and $|\ve^{(0)}_3|_{l+2b_0}\leq Ct_0^{(2b_0+2b_0B-l)/l}$. Note that we allow the constant $C$ (which depends on $l$) to change from one inequality to the next. By definition we have
\begin{align}\label{17:46:01}
\mcL(|\ve_2^{(0)}|_{l+2b_0};|\ve_3^{(0)}|_{l+2b_0})=&Poly(|\ve^{(0)}_2|_{\lfloor (l+2b_0)/2\rfloor+1})|\ve_3^{(0)}|_{l+2b_0}\\&+Poly(|\ve^{(0)}_2|_{\lfloor (l+2b_0)/2\rfloor+1})|\ve_2^{(0)}|_{l+2b_0}|\ve_3^{(0)}|_{\lfloor (l+2b_0)/2\rfloor+1},\nonumber
\end{align}
from which it follows that 
\begin{align}\label{15:35:34}
|\xi^{(0)}|_{l+b_0}\leq Ct_0^{(2b_0+2b_0B-l)/l}=Ct_0^{(4b_0+4b_0B-l)/2l}t_0^{-1/2}.
\end{align}
Since $l>4b_0(1+B)$, (4) holds if $t_0$ is chosen sufficiently large with respect to the constant $C$, which in turn only depends on $l$. This is the first update of $t_0$.

Now suppose that (1)-(4) hold for some $i\geq 0$, we will show that they hold for $i+1$ as well. For (1) we compute, using (\ref{18:14:22}),
\begin{align*}
|\ve^{(i+1)}|_l=|e^{d\xi^{(i)}}\cdot \ve^{(i)}|_l\leq &|\ve^{(i)}|_l+C(|\ve^{(i)}|_l|\xi^{(i)}|_{2}+|\xi^{(i)}|_{l+1})
 \leq  \delta_0 \cdot \frac{i+1}{i+2} +C t_{i}^{-1/2} ,
\end{align*}
which we would like to bound by $\delta_0\cdot \frac{i+2}{i+3}$. Slightly rewritten, this amounts to showing that
\begin{align*}
t_{i}^{1/2}\geq C(i+2)(i+3)/\delta_0.
\end{align*}
Since the right hand side is a polynomial in $i$ and $t_{i}^{\frac{1}{2}}=t_0^{\frac{1}{2}\cdot(\frac{3}{2})^{i}}$ grows exponentially in $i$, this estimate holds for all sufficiently large $i$ and therefore for all $i$ if $t_0$ is chosen sufficiently large with respect to $C$ (which depends only on $l$) and $\delta_0$. This is the second update of $t_0$.

For (2), we use (\ref{18:14:22}) to compute 
\begin{align}\label{21:42:22}
|\ve^{(i+1)}|_{2l}&\leq |\ve^{(i)}|_{2l}(1+C|\xi^{(i)}|_1)+C|\xi^{(i)}|_{2l+1}\leq C (t_{i}^B+|\xi^{(i)}|_{2l+1}).
\end{align}
Using (\ref{18:14:42}) and (\ref{20:48:36}), we obtain
\begin{align*}
|\xi^{(i)}|_{2l+1}
=|S_{t_{i}}\Phi(\ve^{(i)})|_{2l+1}
\leq & C t_{i}^{b_0+3}|\Phi(\ve^{(i)})|_{2l-b_0-2}
\leq t_{i}^{b_0+3}\cdot \mcL(|\ve^{(i)}_2|_{2l-2};|\ve^{(i)}_3|_{2l-2}) 
\end{align*}
Combining the induction hypothesis with (\ref{17:46:01}) we conclude that $|\xi^{(i)}|_{2l+1}\leq  Ct_{i}^{b_0+3+B}$, which we can substitute into (\ref{21:42:22}) to obtain
\begin{align*}
|\ve^{(i+1)}|_{2l} \leq C t_{i}^{b_0+3+B} = C t_{i+1}^{\frac{1}{3}(2b_0+6-B) } t_{i+1}^B.
\end{align*}
Since $B>2b_0+6$ we obtain (2) for $t_0$ sufficiently large with respect to $C$ (which depends only on $l$ and not on $i$). This is the third update of $t_0$. 

Next, to obain (3) we use (\ref{21:49:42}) to compute
\begin{align*}
|\ve_3^{(i+1)}|_l=|(e^{d\Phi(\ve^{(i)})}\cdot \ve^{(i)})_3|_l \leq& \big( \mcL(|\ve_1^{(i)}|_{l+b_0};|\ve_3^{(i)}|_{l+b_0})+\mcL(|\ve_2^{(i)}|_{l+b_0};|\ve_3^{(i)}|_{l+b_0})\big)\cdot |\ve_3^{(i)}|_{a_0}. 
\end{align*}
Using (\ref{20:45:37}) we can replace the norm $|\cdot |_{l+b_0}$ that appears in this expression by suitable powers of the norms $|\cdot |_{l}$ and $|\cdot |_{2l}$, just as we did in (\ref{21:56:26}). This gives
\begin{align*}
|\ve_3^{(i+1)}|_l\leq C t_{i}^{(b_0(B+1)-2l)/l}=Ct_{i+1}^{(2b_0(B+1)-l)/3l}t_{i+1}^{-1}
\end{align*}
which is bounded by $t_{i+1}^{-1}$ for $t_0$ sufficiently large, because $l>2b_0(B+1)$ (again, $C$ depends only on $l$). This is the fourth update on $t_0$. 

Finally, to derive (4) we use the by now proven inequalities (1), (2) and (3) at step $i+1$, and proceed as in the case of $i=0$ above to obtain 
\begin{align*}
|\xi^{(i+1)}|_{l+b}\leq C t_{i+1}^{(4b_0(1+B)-l)/2l}t_{i+1}^{-1/2}.
\end{align*}
Note that the constant $C$ here is the same as in (\ref{15:35:34}), for the bounds and the Leibniz polynomials that are involved only depend on $l$. In particular, because $t_{i+1}>t_0$ there is no need to further enlarge $t_0$ to obtain (4). 

In the end, to make the induction steps work we needed to enlarge $t_0$ four times with respect to data that depends only on the fixed integer $l$. This establishes the lemma. 
\end{proof}

At this point we have fixed $t_0$ and therefore the sequence $(\ve^{(i)},\xi^{(i)})$ (for given starting data $\ve^{(0)}$), we know that it converges in the $C^{l+b_0}$-topology with the desired limit and so we now have to upgrade this to convergence in the $C^\infty$-topology. We can not repeat the exact same strategy of the previous lemma because we lost the freedom of adjusting $t_0$, but fortunately it suffices at this point to obtain estimates that are weaker than those of Lemma \ref{22:15:42}.

\begin{lem}\label{11:50:30}
With the set-up and notation of Lemma \ref{22:15:42}, let $k\geq l$ be an integer and suppose that there exists a constant $D_k$ and an integer $d_k$ such that for all $i\geq d_k$ we have\footnote{Note that the case where $k=l$ and $d_l=0$ is precisely the conclusion of Lemma \ref{22:15:42}.}
\begin{align}\label{10:33:49}
|\ve^{(i)}|_{k}\leq D_k\cdot \frac{i+1}{i+2}, \hspace{10mm}  |\ve^{(i)}|_{2k}\leq D_kt_i^B \hspace{5mm} \text{and} \hspace{5mm} |\ve^{(i)}_3|_k\leq D_kt_i^{-1}.
\end{align}
Then there exist a constant $D_{k+1}$ and an integer $d_{k+1}> d_k$ such that for all $i\geq d_{k+1}$ we have
\begin{align*}
\text{(1)} & \hspace{3mm} |\xi^{(i)}|_{k+1+b_0}\leq t_i^{-1/2}  	\hspace{19mm}	\text{(3)}  \hspace{3mm}  |\ve^{(i)}|_{2(k+1)}\leq D_{k+1}t_i^B\\
\text{(2)} & \hspace{3mm} |\ve^{(i)}|_{k+1} \ \ \  \leq D_{k+1}\cdot \frac{i+1}{i+2} \hspace{1cm} \text{(4)}  \hspace{3mm}  |\ve^{(i)}_3|_{k+1} \ \ \ \leq D_{k+1} t_i^{-1}
\end{align*} 
\end{lem}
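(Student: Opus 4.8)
The proof is by induction on $k$, the base case $k=l$ being exactly Lemma \ref{22:15:42}; recall that that lemma also guarantees the whole sequence $(\ve^{(i)},\xi^{(i)})$ is well defined and smooth, so every norm below is finite. I abbreviate $\Phi^{(i)}:=\Phi(\ve^{(i)})$ and use throughout that $t_i=t_0^{(3/2)^i}$ grows doubly exponentially, so that $\sum_i t_i^{-1/2}<\infty$ and any fixed negative power of $t_i$ tends to $0$. The inputs are the evolution bound \eqref{18:14:22}, the $\Phi$-bound \eqref{20:48:36} and the quadratic bound \eqref{21:49:42} from Lemma \ref{20:33:39}, together with the smoothing estimates \eqref{18:14:42}--\eqref{11:03:30} of Proposition \ref{10:57:29} and the interpolation inequalities \eqref{20:45:37} of Proposition \ref{13:43:23}. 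The essential difference with Lemma \ref{22:15:42} is that $t_0$ is now frozen; I compensate by being free to \emph{enlarge} $D_{k+1}$ and $d_{k+1}$, which works precisely because all the relevant exponents of $t_i$ come out with the correct sign.

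For Step (1) I do not use the smoothing gain at all: since $S_{t_i}$ is bounded, $|\xi^{(i)}|_{k+1+b_0}\le C|\Phi^{(i)}|_{k+1+b_0}$, and \eqref{20:48:36} bounds the right-hand side by $\mcL(|\ve^{(i)}_2|_{k+1+2b_0};|\ve^{(i)}_3|_{k+1+2b_0})$. As $k\ge l\ge 2b_0+2$ we have $k+1+2b_0\le 2k$, so I interpolate every $(k+1+2b_0)$-norm between level $k$ and level $2k$ via \eqref{20:45:37} and insert \eqref{10:33:49}: the low-order factors stay bounded, $|\ve^{(i)}_3|_k\le D_kt_i^{-1}$ supplies the decay and $|\ve^{(i)}|_{2k}\le D_kt_i^{B}$ the growth. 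The outcome is $|\xi^{(i)}|_{k+1+b_0}\le C D_k^2\, t_i^{-1+\theta(1+B)}$ with $\theta=(2b_0+1)/k$. The crucial point is that $\theta(1+B)=(2b_0+1)(12+2b_0)/k<\tfrac12$, since $k\ge l> 2(2b_0+1)(12+2b_0)$ by the definition \eqref{10:07:34} of $l$ and $B$; hence the exponent is $<-\tfrac12$ and $|\xi^{(i)}|_{k+1+b_0}\le t_i^{-1/2}$ as soon as $i$ is large enough to absorb $CD_k^2$. This fixes a first threshold.

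For Steps (2) and (3) I feed (1) into \eqref{18:14:22}. At level $k+1$ this gives $|\ve^{(i)}|_{k+1}\le |\ve^{(i-1)}|_{k+1}(1+Ct_{i-1}^{-1/2})+Ct_{i-1}^{-1/2}$, using $|\xi^{(i-1)}|_{1},|\xi^{(i-1)}|_{k+2}\le|\xi^{(i-1)}|_{k+1+b_0}\le t_{i-1}^{-1/2}$ (valid since $b_0\ge 1$); as $\sum t_j^{-1/2}<\infty$ the sequence $|\ve^{(i)}|_{k+1}$ stays uniformly bounded, so taking $D_{k+1}$ at least twice that bound yields (2) in the stated form (using $\tfrac{i+1}{i+2}\ge\tfrac12$). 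At level $2(k+1)$ the same bound reads $|\ve^{(i)}|_{2(k+1)}\le|\ve^{(i-1)}|_{2(k+1)}(1+Ct_{i-1}^{-1/2})+C|\xi^{(i-1)}|_{2k+3}$; here I bound the high norm of $\xi^{(i-1)}$ by smoothing \emph{up} from the highest level at which $\Phi$ is still controlled, $|\xi^{(i-1)}|_{2k+3}\le Ct_{i-1}^{3+b_0}|\Phi^{(i-1)}|_{2k-b_0}\le CD_k^2 t_{i-1}^{B+3+b_0}$ by \eqref{18:14:42} and \eqref{20:48:36}. Writing $t_{i-1}=t_i^{2/3}$, the forcing exponent becomes $\tfrac23(B+3+b_0)=B-\tfrac53<B$, precisely because $B=11+2b_0$; hence $b_i:=|\ve^{(i)}|_{2(k+1)}t_i^{-B}$ obeys a contraction $b_i\le\tfrac12 b_{i-1}+\rho_i$ with $\rho_i\to 0$ for large $i$ and stays bounded, giving (3).

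Step (4) exploits that closed-$2$-form transforms form an abelian group, $e^{B_1}e^{B_2}=e^{B_1+B_2}$. Setting $\eta:=(1-S_{t_{i-1}})\Phi^{(i-1)}$, so that $d\xi^{(i-1)}=d\Phi^{(i-1)}-d\eta$, I factor $\ve^{(i)}=e^{d\xi^{(i-1)}}\cdot\ve^{(i-1)}=e^{-d\eta}\cdot\ve'$ with $\ve':=e^{d\Phi^{(i-1)}}\cdot\ve^{(i-1)}$. The component $\ve'_3$ is quadratically small: \eqref{21:49:42} at level $k+1$, after the interpolation of Step (1), gives $|\ve'_3|_{k+1}\le CD_k^2 t_i^{-10/9}$, an exponent $<-1$ with a definite margin. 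The leftover transform $e^{-d\eta}$ only perturbs this harmlessly: by \eqref{18:14:22}, $|\ve^{(i)}_3|_{k+1}\le |\ve'_3|_{k+1}+C|\ve'|_{k+1}|\eta|_1+C|\eta|_{k+2}$, and the smoothing-error estimate \eqref{11:03:30} bounds $|\eta|_1$ and $|\eta|_{k+2}$ by $t_{i-1}^{(\cdot)-p}|\Phi^{(i-1)}|_p$ with $p=2k-b_0$, which carries the strongly negative power $B-k+O(1)$ of $t_{i-1}$. All three terms are thus $\le t_i^{-1}$ for large $i$, and after enlarging $D_{k+1}$ we obtain (4); taking $d_{k+1}$ to be the largest of the four thresholds (and increasing $D_{k+1}$ to cover the finitely many initial indices) closes the induction. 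The only genuinely delicate point is precisely this bookkeeping of powers of $t_i$: one must check that the exponent produced by interpolating against the $t_i^{B}$-growth at level $2k$ is always beaten either by the $t_i^{-1}$-decay of $\ve_3$ or by the gain from $t_{i+1}=t_i^{3/2}$, and this is exactly where the constants in \eqref{10:07:34} are spent ($l$ large forces $\theta(1+B)<\tfrac12$ in Steps (1) and (4); $B=11+2b_0$ forces $\tfrac23(B+3+b_0)<B$ in Step (3)). The secondary subtlety, that the iterate uses the \emph{smoothed} $\xi^{(i-1)}$ rather than $\Phi^{(i-1)}$ so that \eqref{21:49:42} does not apply directly, is repaired by the abelian factorisation together with the super-polynomial decay of the smoothing error $\eta$.
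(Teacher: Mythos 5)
Your proof is correct and, for parts (1)--(3), follows essentially the same route as the paper: the same interpolation of the $|\cdot|_{k+1+2b_0}$-norms between levels $k$ and $2k$, the same exponent bookkeeping against the thresholds fixed in \eqref{10:07:34}, and the same recursive closure (your summability/contraction closure of the recursions in (2) and (3) replaces the paper's explicit threshold induction with the factor $\tfrac{i+1}{i+2}$, but yields the same conclusion after enlarging $D_{k+1}$). The genuine divergence is in step (4). The paper applies the quadratic estimate \eqref{21:49:42} directly to $(e^{d\xi^{(i-1)}}\cdot\ve^{(i-1)})_3$, even though that estimate is stated and proved only for the \emph{unsmoothed} gauge $e^{d\Phi(\ve)}$ — its proof rests on the exact cancellation $\Delta_{\ve_2}N_{\ve_2}\ve_3=\ve_3$ encoded in \eqref{21:28:07}, which is destroyed by applying $S_{t_{i-1}}$. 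You address this head-on: using that $B$-field transforms compose additively, you factor $e^{d\xi^{(i-1)}}=e^{-d\eta}\,e^{d\Phi^{(i-1)}}$ with $\eta=(1-S_{t_{i-1}})\Phi^{(i-1)}$, apply \eqref{21:49:42} legitimately to the middle factor, and control the residual transform by combining \eqref{18:14:22} with the smoothing-error bound \eqref{11:03:30} evaluated at $p=2k-b_0$, where \eqref{20:48:36} and the hypothesis at level $2k$ keep $|\Phi^{(i-1)}|_p$ under control; since $k\geq l\gg B+b_0+2$, the error carries the strongly negative exponent $B+b_0+2-k$ and is dominated by $t_i^{-1}$. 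This buys you a proof of (4) that is actually justified as written, at the cost of a slightly longer argument; the paper's version is shorter but elides exactly this point (the same elision occurs in step (3) of Lemma \ref{22:15:42}, where $\ve^{(i+1)}_3$ is silently identified with $(e^{d\Phi(\ve^{(i)})}\cdot\ve^{(i)})_3$). One small omission on your side: you should note that $|\ve'|_{k+1}$ is bounded (via \eqref{18:14:22} applied to $e^{d\Phi^{(i-1)}}$ and \eqref{20:48:36}) before using it in the perturbation term, and that both intermediate transforms in the factorisation are small enough to be well defined; both are immediate from the bounds you already have.
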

\begin{proof}
We start with (1). Using (\ref{18:14:42}) and (\ref{20:48:36}), we compute
\begin{align*}
|\xi^{(i)}|_{k+1+b_0}=|S_{t_{i}}\Phi(\ve^{(i)})|_{k+1+b_0}\leq &\mcL(|\ve^{(i)}_2|_{k+1+2b_0};|\ve^{(i)}_3|_{k+1+2b_0}).	
\end{align*}
As in the proof of Lemma \ref{22:15:42}, we can use (\ref{20:45:37}) to replace the norm $|\cdot |_{k+1+2b_0}$ by the norms $|\cdot |_{k}$ and $|\cdot |_{2k}$. If $i\geq d_k$, the hypothesis (\ref{10:33:49}) implies that  
\begin{align*}
|\xi^{(i)}|_{k+1+b_0}\leq &C t_{i}^{\frac{(2b_0+1)(1+B)}{k}-1}=C t_i^{\frac{(2b_0+1)(1+B)}{k}-\frac{1}{2}}t_i^{-1/2}.
\end{align*}
Since $k\geq l>(4b_0+2)(B+1)$ (c.f.\ (\ref{10:07:34})), the term that multiplies $t_i^{-1/2}$ above is a negative power of $t_i$, hence (1) will hold for $i\geq d_{k+1}$ when $d_{k+1}$ is sufficiently large with respect to the constant $C$ (which depends only on $k$). 

Next we consider (2). Using (\ref{18:14:22}) and the by now proven inequality (1), we obtain 
\begin{align}
|\ve^{(i)}|_{k+1}=|e^{d\xi^{(i-1)}}\cdot\ve^{(i-1)}|_{k+1}\leq &|\ve^{(i-1)}|_{k+1}(1+C|\xi^{(i-1)}|_1)+C |\xi^{(i-1)}|_{k+2} \nonumber
\\ \leq & |\ve^{(i-1)}|_{k+1}(1+Ct_{i-1}^{-1/2})+C t_{i-1}^{-1/2}. \label{10:21:31}
\end{align} 
If we would have a bound of the form $|\ve^{(i-1)}|_{k+1}\leq D_{k+1} \cdot \frac{i}{i+1}$, then the above inequality would imply $|\ve^{(i)}|_{k+1}\leq D_{k+1} \cdot \frac{i+1}{i+2}$, provided that
\begin{align}\label{10:20:58}
t_{i-1}^{1/2}\geq C(i+1)(i+2)\big(\frac{1}{D_{k+1}}+\frac{i}{i+1}\big). 
\end{align}
Therefore, we first enlarge $d_{k+1}$ sufficiently so that (\ref{10:20:58}), with $D_{k+1}$ temporarily set to $1$, holds for all $i\geq d_{k+1}$. Then, we enlarge $D_{k+1}$ sufficiently so that 
\begin{align}\label{10:22:13}
|\ve^{(d_{k+1})}|_{k+1}\leq D_{k+1}\cdot \frac{d_{k+1}+1}{d_{k+1}+2}. 
\end{align}
This procedure makes sense because enlarging $D_{k+1}$ does not spoil the bound (\ref{10:20:58}). Now (2) follows by induction on $i\geq d_{k+1}$, the base case being (\ref{10:22:13}) and the induction step using (\ref{10:21:31}) and (\ref{10:20:58}). Next, for (3) we compute
\begin{align}\label{10:53:42}
|\ve^{(i)}|_{2k+2}=|e^{d\xi^{(i-1)}}\cdot \ve^{(i-1)}|_{2k+2}\leq  |\ve^{(i-1)}|_{2k+2}(1+C|\xi^{(i-1)}|_2)+C|\xi^{(i-1)}|_{2k+3}.
\end{align}
Using (\ref{18:14:42}) and (\ref{20:48:36}), we deduce that
\begin{align*}
|\xi^{(i-1)}|_{2k+3}=|S_{t_{i-1}}\Phi(\ve^{(i-1)})|_{2k+3}\leq &C t_{i-1}^{5+b_0} |\Phi(\ve^{(i-1)})|_{2k-b_0-2}\\
\leq & t_{i-1}^{5+b_0} \mcL(|\ve_2^{(i-1)}|_{2k-2};|\ve_3^{(i-1)}|_{2k-2})\\
\leq & Ct_{i-1}^{5+b_0+B},
\end{align*}
where in the last step we used (\ref{10:33:49}). Substituting this into (\ref{10:53:42}) yields
\begin{align}\label{11:26:54}
|\ve^{(i)}|_{2k+2}\leq C( |\ve^{(i-1)}|_{2k+2} + t_i^{\frac{2}{3}(5+b_0+B)})=C( |\ve^{(i-1)}|_{2k+2} + t_i^{\frac{2}{3}(5+b_0-\frac{B}{2})} t_i^B).
\end{align}
Because $B>10+2b_0$ (c.f.\ (\ref{10:07:34})) we can ensure that $Ct_i^{\frac{2}{3}(5+b_0-\frac{B}{2})}\leq 1/2$ by enlarging $d_{k+1}$ sufficiently. Subsequently, we enlarge $D_{k+1}$ so that 
\begin{align}\label{11:10:41}
|\ve^{(d_{k+1})}|_{2k+2}\leq D_{k+1} t_{d_{k+1}}^B.
\end{align}
Then (3) follows by induction on $i\geq d_{k+1}$, the base case being (\ref{11:10:41}) and the inductive step being (\ref{11:26:54}) which becomes
\begin{align*}
|\ve^{(i)}|_{2k+2}\leq C( D_{k+1}t_i^{-B/3} + t_i^{\frac{2}{3}(5+b_0-\frac{B}{2})}) t_i^B\leq \tfrac{1}2(D_{k+1}+1)t_i^B\leq D_{k+1}t_i^B.
\end{align*}
Finally, for (4) we use (\ref{21:49:42}) to compute
\begin{align*}
|\ve_3^{(i)}|_{k+1}=|(e^{d\xi^{(i-1)}}\cdot\ve^{(i-1)})_3|_{k+1}&\leq \mcL(|\ve^{(i-1)}_1|_{k+b_0+1}+|\ve^{(i-1)}_2|_{k+b_0+1};|\ve^{(i-1)}_3|_{k+b_0+1})\cdot |\ve^{(i-1)}_3|_{a_0}.
\end{align*}
Using the by now familiar trick to replace $|\cdot |_{k+1+b_0}$ by powers of $|\cdot |_{k}$ and $|\cdot |_{2k}$, together with the hypothesis (\ref{10:33:49}), we obtain   
\begin{align*}
|\ve_3^{(i)}|_{k+1}\leq&\ Ct_i^{\frac{2}{3}(\frac{(1+b_0)(1+B)}{k} -2) }=C t_i^{\frac{1}{3}(\frac{2(1+b_0)(1+B)}{k} -1) }t_i^{-1}.
\end{align*}
As before, the term multiplying $t_i^{-1}$ is bounded with respect to $i$, and so (4) will hold if we enlarge $D_{k+1}$ sufficiently. In the end, we needed to enlarge $d_{k+1}$ and $D_{k+1}$ three times with respect to data that depends only on $k$. This establishes the lemma. 
\end{proof}

\subsubsection*{Proof of Theorem \ref{14:17:27}}\label{16:29:09}

\textsl{Proof.} Let $(M,I,\sigma)$ be a compact holomorphic Poisson manifold with boundary so that $(M,I)$ is $2$- and $3$-convex and for which $H^{0,2}(M,I)=0$. Let $a_0,b_0$ and $\delta_0$ be the associated constants of Lemma \ref{20:33:39}, define $l$ and $B$ by (\ref{10:07:34}) and let $t_0$ be the constant of Lemma \ref{22:15:42}. Define
\begin{align*}
a:=2l \hspace{15mm} \text{and}\hspace{15mm}  \delta:=\min(\tfrac{\delta_0}{2},t_0^{-1},t_0^B).
\end{align*}
Now let $\ve\in \mcD$ be integrable such that $|\ve|_a<\delta$. Then $\ve$ satisfies the hypotheses of Lemma \ref{22:15:42}, providing us with the corresponding sequence $(\ve^{(i)},\xi^{(i)})$ that satisfies the estimates of both Lemma \ref{22:15:42} and Lemma \ref{11:50:30}. In particular, the series
$$\xi:=\sum_{i=0}^\infty \xi^{(i)}$$
converges in $C^\infty(T^\ast M)$ and has the property that  
$$(e^{d\xi}\cdot \ve)_3=\lim_{n\rightarrow \infty} (e^{d\sum_{i=0}^n \xi^{(i)}}\cdot \ve^{(0)})_3=\lim_{n\rightarrow \infty} (\ve^{(n+1)})_3=0.	$$
This is what we set out to prove.  \qed


\subsection{Proof of the neighbourhood theorem}

\textsl{Proof of Theorem \ref{16:02:38}.}
Let $(Y,\tau)$ be an Abelian Poisson brane in $(M,\J)$. Recall that there is an induced complex structure on $Y$ and a holomorphic vector bundle structure on $NY$. In particular, the total space of $NY$ is a complex manifold whose complex structure we denote by $I$. By choosing a tubular embedding we may pull back $\J$ to a neighbourhood of $Y$ in $NY$, so we may assume without loss of generality that $M=NY$ and that $Y$ is the zero section. For $t\in [0,1]$ we denote by $m_t:M\rightarrow M$ the fiber-wise rescaling, which for $t=0$ equals the projection to $Y$. Our first step is to show that $\J_t:=m_t^\ast(\J)$ converges, as $t\rightarrow 0$, to a holomorphic Poisson structure on $M$ whose complex structure agrees with $I$.  

Let us choose a convenient splitting of the exact Courant algebroid $E$ as follows. Consider a splitting $E=TM\oplus T^\ast M$ so that the induced splitting $E_Y=TY\oplus T^\ast Y$ coincides with the splitting induced by the brane structure $\tau$. In particular, the three-form $H$ on $M$ satisfies $i^\ast H=0$, where $i:Y\hookrightarrow M$ denotes the inclusion. Since $M$ is a vector bundle over $Y$, this implies that $H=dB$ for some two-form $B$ that vanishes on $Y$, so we may change the original splitting by $B$ to arrange for $H=0$. Next, we write $\J$ in this splitting as
\begin{align}\label{18:29:40}
\J=\begin{pmatrix} -A & \pi \\ \beta & A^\ast	\end{pmatrix}
\end{align}
for an endomorphism $A:TM\rightarrow TM$, bi-vector $\pi:T^\ast M\rightarrow TM$ and two-form $\beta:TM\rightarrow T^\ast M$. Because $Y$ is a Poisson brane we have:
\begin{align}\label{19:44:13}
A^\ast(N^\ast Y)=N^\ast Y, \quad\quad \pi(N^\ast Y)=0, \quad\quad i^\ast \beta=0.
\end{align}
Here the first two equations are a consequence of $\J( N^\ast Y)=N^\ast Y$. The third equation follows from the fact that the induced generalized complex structure on $TY\oplus T^\ast Y=(TY\oplus T^\ast M)/N^\ast Y$ is holomorphic Poisson. 
\begin{lem}
The generalized complex structure $\J_t$ converges, as $t\rightarrow 0$, to a generalized complex structure $\J_0$ which is given by a holomorphic Poisson structure.
\end{lem}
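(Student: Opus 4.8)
The plan is to exploit the fibrewise $\mathbb{R}_{>0}$-action on $M=NY$ given by the rescalings $m_t$ and to organize $\J$ by its weight under this action. First I would fix coordinates $(z^i,w^a)$ on a neighbourhood of $Y$ in $NY$ that are holomorphic for $I$, with the $z^i$ pulled back from $Y$ and the $w^a$ linear along the fibres, so that $m_t(z,w)=(z,tw)$. Recording the action of $m_t$ on the frame of $\T M$ gives $(m_t)_\ast\del_{z^i}=\del_{z^i}$, $(m_t)_\ast\del_{w^a}=t\,\del_{w^a}$, $(m_t)_\ast dz^i=dz^i$ and $(m_t)_\ast dw^a=t^{-1}dw^a$; assigning to each frame element $e$ the weight $\mathrm{wt}(e)$ defined by $(m_t)_\ast e=t^{\mathrm{wt}(e)}e$, this yields $\mathrm{wt}(\del_z)=\mathrm{wt}(dz)=0$, $\mathrm{wt}(\del_w)=1$ and $\mathrm{wt}(dw)=-1$ (and likewise for conjugates).

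Since $\J_t=(m_t)_\ast^{-1}\J(m_t)_\ast$, the entry of $\J_t$ sending a frame element $f$ to a frame element $e$ equals $t^{\mathrm{wt}(f)-\mathrm{wt}(e)}$ times the corresponding entry of $\J$ evaluated at $(z,tw)$. Expanding that entry into its homogeneous pieces of fibre-degree $d\geq 0$ in $(w,\bar w)$, the $(e,f)$-entry of $\J_t$ becomes $\sum_{d\geq 0}t^{\,d+\mathrm{wt}(f)-\mathrm{wt}(e)}(\,\cdot\,)_d$. Thus $\J_t$ extends smoothly across $t=0$ precisely when every exponent $d+\mathrm{wt}(f)-\mathrm{wt}(e)$ occurring with a nonzero coefficient is $\geq 0$, and in that case $\J_0$ is the sum of the weight-zero pieces (those with exponent $0$).

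Using the block form \eqref{18:29:40}, the only entries with $\mathrm{wt}(e)-\mathrm{wt}(f)>0$, hence the only possible sources of a negative exponent, are: the part of $A$ sending $\del_z$ into the fibre directions (weight-difference $1$); the parts of $\pi$ sending $dw$ to $\del_z$ and $dz$ to $\del_w$ (weight-difference $1$); and the part of $\pi$ sending $dw$ into the fibre directions (weight-difference $2$). The relations \eqref{19:44:13} handle the weight-difference-$1$ terms: $A^\ast(N^\ast Y)=N^\ast Y$ forces the $\del_z\to\del_w$ block of $A$ to vanish on $Y$, while $\pi(N^\ast Y)=0$ together with the skew-symmetry of the bivector forces both weight-difference-$1$ blocks of $\pi$ to vanish on $Y$; each therefore has fibre-degree $d\geq 1$. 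The delicate term is the $dw\to\del_w$ block of $\pi$, which must vanish to fibre-degree $d\geq 2$: its degree-$0$ part is killed by $\pi(N^\ast Y)=0$, and its degree-$1$ part is exactly the fibrewise Lie bracket on $N^\ast Y$ of Lemma~\ref{lem:lie algebra bundle}, which vanishes by the Abelian hypothesis (equivalently, the induced Poisson structure vanishes quadratically in the normal directions). This is where Abelianity enters and is the crux of the convergence argument; every remaining entry has nonnegative weight automatically.

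It then remains to identify the limit. The two-form block $\beta$ has $\mathrm{wt}(e)-\mathrm{wt}(f)\leq 0$ throughout, and its only weight-zero piece is $\beta|_Y$ on base directions, which is zero by $i^\ast\beta=0$; hence $\beta$ contributes nothing and $\J_0$ is block upper-triangular, i.e.\ preserves $TM$. For each $t\in(0,1]$ the structure $\J_t=m_t^\ast\J$ is generalized complex (a diffeomorphism pullback, with the trivial three-form since $H=0$), so it is orthogonal, squares to $-1$, and is integrable; as these are closed conditions and $\J_t\to\J_0$ in $C^\infty$, the limit $\J_0$ is again generalized complex. Being integrable and preserving the involutive isotropic subbundle $TM$, it is holomorphic Poisson by Example~\ref{11:37:30c}, its complex structure being the weight-zero part of $A$, which coincides with $I$ by the very construction of $I$ in Lemma~\ref{lem:apb=>hol normal bundle}. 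This proves the lemma.
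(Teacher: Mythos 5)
Your proof is correct and follows essentially the same route as the paper: the weight bookkeeping is just a repackaging of the paper's explicit block decomposition of $A$, $\pi$ and $\beta$ into base and fibre directions (equation \eqref{11:23:45}), with the brane conditions \eqref{19:44:13} killing the degree-zero parts of the weight-difference-one blocks and Abelianity killing the linear term of the fibre--fibre block of $\pi$, exactly as in the paper. The only caveat is your closing assertion that the limiting complex structure equals $I$: that identification is not part of this lemma and the paper establishes it separately (via a scale-invariance argument and an auxiliary lemma), but for the statement at hand your argument is complete.
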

\begin{proof} It suffices to look at a single coordinate chart. Consider a chart in $Y$ with coordinates $(y^1,\ldots,y^k)$ over which $M=NY$ is trivialized as a vector bundle, with fiber coordinates $(x^1,\ldots,x^l)$. We will abbreviate the resulting coordinates on $M$ by $(y,x):=(y^1,\ldots,y^k,x^1,\ldots,x^l)$, dropping the indices to enhance readability. Then $m_t$ is given by $m_t(y,x)=(y,tx)$ while the tensors $A,\pi,\beta$ can be decomposed as 
\begin{align*}
A_{(y,x)}=&A^{xx}_{(y,x)}dx\otimes \del_x+A^{xy}_{(y,x)}dx\otimes \del_y+A^{yx}_{(y,x)}dy\otimes \del_x +A^{yy}_{(y,x)}dy\otimes \del_y,\\
\pi_{(y,x)}=&\pi^{xx}_{(y,x)} \del_x\wedge \del_x+\pi^{xy}_{(y,x)}\del_x\wedge \del_y+\pi^{yy}_{(y,x)}\del_y\wedge \del_y,\\
\beta_{(y,x)}=&\beta^{xx}_{(y,x)}dx\wedge dx+\beta^{xy}_{(y,x)}dx\wedge dy+\beta^{yy}_{(y,x)}dy\wedge dy.
\end{align*}  
The indices have been dropped in these expressions, and the subscript $(y,x)$ denotes a point in $M$. With respect to these decompositions, (\ref{19:44:13}) reduces to
$$A^{yx}_{(y,0)}=0, \quad\quad \pi^{xx}_{(y,0)}=\pi^{xy}_{(y,0)}=0, \quad\quad \beta^{yy}_{(y,0)}=0.		$$ 
Furthermore, the Lie algebra structure on $N^\ast Y$ is given by 
\begin{align*}
[dx^i,dx^j]=\sum_{r=1}^l(\del_{x^r}\pi^{xx,ij})_{(y,0)}dx^r,
\end{align*}
so the fact that $N^\ast Y$ is Abelian is equivalent to $(\del_{x}\pi^{xx})_{(y,0)}=0$ (again dropping indices). All together, these five conditions imply that the rescalings of $A$, $\pi$ and $\beta$, which are given by 
\begin{align}
(m_t^\ast A)_{(y,x)}=&A^{xx}_{(y,tx)}dx\otimes \del_x+tA^{xy}_{(y,tx)}dx\otimes \del_y+\frac{1}{t}A^{yx}_{(y,tx)}dy\otimes \del_x +A^{yy}_{(y,tx)}dy\otimes \del_y, \nonumber\\
(m_t^\ast \pi)_{(y,x)}=&\frac{1}{t^2}\pi^{xx}_{(y,tx)} \del_x\wedge \del_x+\frac{1}{t}\pi^{xy}_{(y,tx)}\del_x\wedge \del_y+\pi^{yy}_{(y,tx)}\del_y\wedge \del_y,\nonumber \\
(m_t^\ast \beta)_{(y,x)}=&t^2\beta^{xx}_{(y,tx)}dx\wedge dx+t\beta^{xy}_{(y,tx)}dx\wedge dy+\beta^{yy}_{(y,tx)}dy\wedge dy, \label{11:23:45}
\end{align}  
converge as $t\rightarrow 0$, with $m_t^\ast(\beta)$ converging to zero. That means that $\J_t$ converges, as a tensor, to a holomorphic Poisson structure $\J_0$ on $M$. 
\end{proof}
It now remains to show that the complex structure underlying $\J_0$, which we denote by $I_0$, agrees with $I$. Recall that the latter is obtained from the holomorphic structure on $NY$, which is induced by the generalized complex structure $\J$. For $t>0$ we have $\J_t=m_t^\ast(\J)$, and we know that $m_t$ is a diffeomorphism which is the identity on $Y$ and whose derivative is the identity on $NY$. As such, every $\J_t$ for $t>0$ induces the same holomorphic structure on $NY$ and therefore so does the limiting structure $\J_0$. Consequently, the complex structure $I_0$ on $M$ induces the holomorphic structure on $NY$ from which $I$ is derived. In addition, we have $I_0=m_t^\ast(I_0)$ for all $t>0$ because $\J_0$ is per construction scale-invariant. The fact that $I=I_0$ then follows from the following general lemma.
\begin{lem}
Let $Y$ be a real manifold and $p:M\rightarrow Y$ a real vector bundle over $Y$, with $m_t:M\rightarrow M$ denoting the fiberwise rescaling map. Let $I_0$ be a scale-invariant complex structure on $M$ for which $Y$ is a complex submanifold. Then $I_0$ coincides with the holomorphic vector bundle structure that it induces on $M$.   
\end{lem}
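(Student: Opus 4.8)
The plan is to reduce everything to a local computation in coordinates adapted to the bundle and to exploit that $I_0$ is invariant under the rescaling flow. First I would fix, exactly as in the preceding lemma, coordinates $(y,x)=(y^1,\dots,y^k,x^1,\dots,x^l)$ in which $p(y,x)=y$, the zero section is $\{x=0\}$, and the fibrewise rescaling is $m_t(y,x)=(y,tx)$. Writing $E=\sum_i x^i\del_{x^i}$ for the Euler (radial) vector field, scale-invariance of $I_0$ is the infinitesimal statement $\Lie_E I_0=0$. Decomposing $I_0$ into blocks $A^{yy},A^{yx},A^{xy},A^{xx}$ according to $I_0\del_{y^a}$ and $I_0\del_{x^i}$, and using $[E,\del_{y^a}]=0$ and $[E,\del_{x^j}]=-\del_{x^j}$, the identity $\Lie_E I_0=0$ unwinds into a system of Euler equations on the coefficient functions: $E(A^{yy})=0$, $E(A^{xx})=0$, $E(A^{xy})=A^{xy}$, and $E(A^{yx})=-A^{yx}$.

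The key elementary point is then the homogeneity dictionary for functions smooth across $\{x=0\}$: a smooth function homogeneous of degree $0$ in $x$ is $x$-independent, one homogeneous of degree $1$ is fibrewise linear, and one homogeneous of degree $-1$ must vanish identically (letting $t\to 0^+$ in $f(y,tx)=t^{-1}f(y,x)$ forces $f\equiv 0$). Applying this, scale-invariance alone forces $A^{yx}=0$, so that $I_0$ preserves the vertical bundle $V=\ker dp$; it forces $A^{yy}(y)$ and $A^{xx}(y)$ to be $x$-independent; and it forces $A^{xy}$ to be fibrewise linear, $A^{xy}(y,x)=\sum_i x^i\Gamma_i(y)$. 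Consequently $p$ is pseudoholomorphic for the structure $I_Y:=A^{yy}$ on $Y$, which (as $Y$ is an $I_0$-complex submanifold and $I_0$ is integrable) is precisely the induced complex structure on $Y$, while $A^{xx}$ is exactly the fibrewise complex structure exhibiting $NY\cong V$ as the complex normal bundle.

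At this stage I would compare $I_0$ with $I_{NY}$, the complex structure on the total space of the holomorphic vector bundle that $I_0$ induces on $NY$. The structure $I_{NY}$ is itself scale-invariant and vertical-preserving, so it carries the same block shape, with the same base structure $I_Y$ and the same fibre structure $A^{xx}$ (both are determined by $I_0|_Y$ and built into the normal-bundle construction). Hence $I_0$ and $I_{NY}$ can differ only in the linear-in-$x$ off-diagonal term $\Gamma_i(y)$. Since $\Gamma$ is nothing but the transverse $1$-jet of the off-diagonal part of $I_0$ along $Y$, the integrability of $I_0$ (vanishing Nijenhuis tensor) shows that $\Gamma$ defines precisely a $\delbar$-operator on $NY$ whose holomorphic sections are the $I_0$-holomorphic sections, i.e. it reproduces the very holomorphic structure used to build $I_{NY}$. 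Therefore the off-diagonal terms agree and $I_0=I_{NY}$. As an alternative route one may observe, via the identity $\Lie_{I_0 E}I_0=N_{I_0}(E,\cdot)+I_0\Lie_E I_0=0$, that real rescaling invariance upgrades to a holomorphic $\C^\ast$-action, which then linearizes $(M,I_0)$ onto the total space of $NY$.

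The main obstacle is exactly this last matching step: every ingredient except the off-diagonal ``connection'' term $\Gamma$ is immediate from scale-invariance, and the difficulty lies in showing that the $\Gamma$ carried by $I_0$ coincides with the one prescribed by the induced holomorphic bundle. I expect the cleanest implementation to characterize $\Gamma$ through the holomorphicity condition for local sections (their graphs being $I_0$-complex), linearized along the zero section, and to verify that this is the same condition defining holomorphic local frames of $NY$; the homogeneity established above is what guarantees that this linearized condition is exact rather than merely valid to first order.
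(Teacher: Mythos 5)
Your reduction via scale-invariance is correct and genuinely different from the paper's argument. Writing $\Lie_E I_0=0$ for the Euler vector field, reading off the homogeneity degrees of the four blocks, and using smoothness across the zero section to conclude that the fibre-to-base block vanishes, the diagonal blocks are fibre-independent, and the base-to-fibre block is fibrewise linear is all valid, and it correctly reduces the lemma to showing that the one remaining piece of data, $\Gamma$, agrees with the corresponding datum of $I_{NY}$. But that matching step --- which you yourself flag as the main obstacle --- is where the proof actually lives, and as written it is asserted rather than proved. Two things are missing. First, $\Gamma$ is a priori a section of $T^\ast Y\otimes \operatorname{End}_{\R}(V)$ (with $V=\ker dp$), whereas the zeroth-order part of a $\delbar$-operator on $NY$ sees only the component that is of type $(0,1)$ in the $T^\ast Y$-slot and conjugate-linear in the $V$-slot relative to $(I_Y,A^{xx})$; you would have to compute the Nijenhuis tensor of the scale-invariant structure along $Y$ and check that its vanishing kills the other components. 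Second, you would have to verify that the surviving component is exactly the $\delbar$-operator defining the \emph{induced} holomorphic structure on $NY$, e.g.\ by linearizing along the zero section the condition that the graph of a section be $I_0$-complex and comparing with the intrinsic definition of the holomorphic normal bundle. Neither computation is carried out, so the argument is not yet a proof.

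For comparison, the paper sidesteps the matching problem entirely: it picks local holomorphic functions $f_1,\dots,f_k$ cutting out $Y$, observes that $m_t^\ast f_i$ is again holomorphic for every $t$ by scale-invariance, and passes to the limit $t\to 0$ (after the implicit rescaling by $t^{-1}$) to obtain fibrewise-linear holomorphic functions; these give a holomorphic trivialization of $M\to Y$ at once, with no block decomposition and no Nijenhuis computation. This is in effect the concrete implementation of the ``alternative route'' you mention at the end (linearizing the holomorphic contraction generated by $E-iI_0E$), and if you want to complete your write-up the most economical fix is probably to replace the $\Gamma$-matching by exactly this limit-of-rescaled-defining-functions argument.
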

\begin{proof}
It suffices to show that $(M,I_0)$ is a holomorphic vector bundle, which we can prove by constructing local holomorphic trivializations over $Y$. For any point in $Y$ we can pick an open neighbourhood $U\subset M$ together with holomorphic functions $f_1,\ldots,f_k\in \mathcal{O}(U)$ that cut out $Y$ as a complex submanifold. Since $I_0$ is scale-invariant, we know that $m_t^\ast(f_1),\ldots,m_t^\ast(f_k)$ are also holomorphic on $m_t^{-1}(U)$ and also cut out $Y$ as a complex submanifold. The limits of these functions as $t\rightarrow 0$ exist, and are linear holomorphic functions on $p^{-1}U\subset M$. This constitutes the desired holomorphic trivialization $(f_1,\ldots,f_k):p^{-1}(U)\rightarrow U\times \C^k$.      
\end{proof}

Right now we know that the family of rescalings $\J_t=m_t^\ast(\J)$ converges to a holomorphic Poisson structure $\J_0$ whose underlying complex structure equals $I$, the complex structure associated canonically with the brane $Y$. One of the hypotheses on $Y$ in Theorem \ref{16:02:38} is that there exists a neighbourhood $U$ in $M$ (recall that $M=NY$ here) such that $(U,I)$ is 2- and 3- convex and for which $H^{0,2}(U,I)=0$. 
These are exactly the hypotheses of Theorem \ref{14:11:52}, hence for $t_0>0$ sufficiently small we obtain an exact two-form $B=d\xi$ that transforms $\J_{t_0}$ into a holomorphic Poisson structure on $U$. Since $\J_{t_0}=m_{t_0}^\ast(\J)$, this implies that 
$$(m_{t_0})_\ast B=d(m_{t_0})_\ast \xi$$ 
transforms $\J$ into a holomorphic Poisson structure on the neighbourhood $V:=m_{t_0}(U)$. This completes the proof of Theorem \ref{16:02:38}.
\qed

\begin{rem}
In the proof we showed that the limit $\J_0$ is given by a holomorphic Poisson structure whose complex structure agrees with the canonical holomorphic structure on $NY$. It is then natural to wonder whether a similar statement holds for the Poisson structure underlying $\J_0$. It turns out that an Abelian Poisson brane $Y$ carries a canonical holomorphic Poisson structure on $NY$, which then can be shown to agree with the limit $\J_0$. In other words, the limit $\J_0$ that we obtain by a scaling argument using a choice of tubular neighbourhood is actually canonically associated to $Y$ from the fact that it is an Abelian Poisson brane. We will not give any details here because they are not relevant for the proof. This is because the Poisson bivector plays no role in the rigidity statement (Theorem \ref{14:11:52}) and the final claim is only that $\J$ is equivalent to some holomorphic Poisson structure, which may very well be different from $\J_0$. If we wanted to obtain a normal form, i.e.\ show that $\J$ is actually equivalent to $\J_0$ itself, we would have to study the bivector as well (in particular we would need to impose restrictions on cohomology groups that involve the bivector).
\end{rem}

\subsubsection*{The case of a point}

Here we will prove Theorem \ref{16:05:53}, which deals with the special case when $Y$ is a point in the complex locus of $\J$. The proof of Theorem \ref{16:02:38} above only works when the normal bundle is Abelian, which in turn was needed to show that the sequence $m_t^\ast(\J)$ converges as $t$ goes to zero. When $Y$ is a point we can get around this as follows. For $t>0$ we define a map $\lambda_t$ on $\T M$ by
\begin{align*}
\lambda_t(X+\xi):= tX+\tfrac{1}{t}\xi.
\end{align*}
For $u,v\in \T M$ we have
$$\langle \lambda_t(u),\lambda_t(v)\rangle = \langle u,v \rangle, \quad \quad \lb \lambda_t(u),\lambda_t(v)\rb_H = t\lambda_t(\lb u,v \rb_{t^2H}),$$
which shows that $\lambda_t$ is never a symmetry of $\T M$. However, in the special case that $H=0$ we do see that $\lambda_t(L)$ is Dirac for any Dirac structure $L$. So when $H=0$ we can apply $\lambda_t$ to a generalized complex structure to obtain another one. The last main feature of $\lambda_t$ is that it preserves holomorphic Poisson structures, specifically we have
\begin{align*}
\lambda_t(\J_{(I,\sigma)})= \J_{(I,t^2\sigma)}.
\end{align*}
\textsl{Proof of Theorem \ref{16:05:53}.}
Similarly to the proof of Theorem \ref{16:02:38}, we first use a chart to reduce to the case where $M=\C^n$ and $Y$ is the origin, with a splitting in which $H=0$, and write $\J$ as in (\ref{18:29:40}). We know that $m_t^\ast \pi$ does not converge in general, so we consider the following family of generalized complex structures instead:
\begin{align*}
\widetilde{J}_t:= \lambda_{{\sqrt{t}}}(m_t^\ast\J) = \begin{pmatrix} -m_t^\ast A & t\cdot m_t^\ast \pi  \\ \tfrac{1}{t}\cdot m_t^\ast \beta & (m_t^\ast A)^\ast	\end{pmatrix}.
\end{align*}
If $(x)$ denotes a set of coordinates around $Y$ (dropping indices again) we have
\begin{align*}
(t\cdot m_t^\ast \pi)_{(x)}=&\frac{1}{t}\pi^{xx}_{(tx)} \del_x\wedge \del_x,		\quad \quad\quad  		(\tfrac{1}{t}\cdot m_t^\ast \beta)_{(x)}=t\beta^{xx}_{(tx)}dx\wedge dx.
\end{align*}
Since $Y$ lies in the complex locus we have $\pi^{xx}_{(0)}=0$, which means that $\widetilde{J}_t$ converges (as $t$ goes to zero) to a holomorphic Poisson structure $\widetilde{\J}_0$. The underlying complex structure of $\widetilde{\J}_0$ is the standard one on $\C^n$, so the unit ball $\mathbb{D}\subset \C^n$ is 2- and 3- convex and $H^{0,2}(\mathbb{D})=0$. Once again we are in the position to apply our rigidity result (Theorem \ref{14:11:52}), implying that $\widetilde{\J}_{t_0}$ is equivalent to a holomorphic Poisson structure via some exact two-form $B\in \Omega^2(\mathbb{D})$. Since $\lambda_t$ preserves holomorphic Poisson structures, this implies that $\J$ itself is equivalent to a holomorphic Poisson structure on $m_t(\mathbb{D})$ via the two-form $(m_t)_\ast(tB)$. \qed

\begin{rem}
Note that this strategy does not work when $Y$ is not a point. One can still consider $\widetilde{J}_t$ which does converge as $t$ goes to zero, but the limit is not guaranteed to be holomorphic Poisson. The reason is that $\tfrac{1}{t}\cdot m_t^\ast \beta$ does not necessarily converge to zero when $Y$ is not a point, as can be seen from (\ref{11:23:45}).
\end{rem}

\begin{rem}
Our arguments for the noncompact case are very reminiscent of a number of other results where Nash--Moser techniques have been used. Particularly close are Hamilton's papers on deformations of complex structures \cite{MR477158,MR594711} which not only use the same techniques but also deal with the same geometric structure and operators. One might wonder if we could not use one of the general results presented there to obtain a proof of our main theorem  without re-running the analytical argument. Yet, as we  remarked in \ref{rem:24-08-22},  our framework is distinct from a standard deformation theory problem and it does not fit Hamilton's setup.
\end{rem}

\bibliographystyle{hyperamsplainnodash}
\addcontentsline{toc}{section}{References}
\bibliography{references}

\end{document}